\documentclass[12pt,draftcls,onecolumn]{IEEEtran}
\usepackage[ruled,boxed]{algorithm2e}

\usepackage{amsmath,graphicx,color,psfrag,mathrsfs,bm}
\usepackage{hyperref}
\usepackage{longtable}          
\usepackage{amsthm}
\usepackage{amssymb}
\usepackage{algorithmic}

\usepackage{multicol}
\newtheorem{thm}{Theorem}[section]
\newtheorem{kor}[thm]{Corollary}
\newtheorem{lem}[thm]{Lemma}
\newtheorem{prop}[thm]{Proposition}
\newtheorem{assumption}[thm]{Assumption}

\theoremstyle{definition}
\newtheorem{defn}{Definition}[section]
\theoremstyle{remark}
\newtheorem{rem}{Remark}[section]

\begin{document}
\title{From Asymptotic to Finite-Sample Minimax Robust Hypothesis Testing}

\author{%
  \IEEEauthorblockN{G\"okhan  G\"ul}\\
  \IEEEauthorblockA{Preventive Cardiology and Preventive Medicine, Department of Cardiology, University Medical Center of the Johannes Gutenberg University Mainz\\
                    Clinical Epidemiology and Systems Medicine, Center for Thrombosis and Hemostasis, University Medical Center  Johannes Gutenberg University Mainz\\
                    German Center for Cardiovascular Research (DZHK), Partner Site Rhine Main, University Medical Center of the Johannes Gutenberg University Mainz\\
                    Langenbeckstra\ss e 1, 55131 Mainz, Germany\\
                    Email: goekhan.guel@unimedizin-mainz.de}
}

\maketitle

\begin{abstract}
This paper establishes a formal connection between finite-sample and asymptotically minimax robust hypothesis testing under distributional uncertainty. It is shown that, whenever a finite-sample minimax robust test exists, it coincides with the solution of the corresponding asymptotic minimax problem. This result enables the analytical derivation of finite-sample minimax robust tests using asymptotic theory, bypassing the need for heuristic constructions. The total variation distance and band model are examined as representative uncertainty classes. For each, the least favorable distributions and corresponding robust likelihood ratio functions are derived in parametric form. In the total variation case, the new derivation generalizes earlier results by allowing unequal robustness parameters. The theory also explains and systematizes previously heuristic designs. Simulations are provided to illustrate the theoretical results.
\end{abstract}

\begin{IEEEkeywords}
Hypothesis testing, event detection, robustness, least favorable distributions, minimax optimization.
\end{IEEEkeywords}

\IEEEpeerreviewmaketitle


\section{Introduction}
In simple binary hypothesis testing, the design of optimum tests requires complete statistical knowledge of the underlying data distributions \cite{kay}. However, this assumption is often too restrictive and seldom holds in practice \cite{levy}. A pragmatic alternative is to adopt either parametric \cite{kassam,elsawy} or non-parametric robust approaches \cite{nonparametric}. Parametric models, including those based on $M$-estimators \cite{hube81_2}, implicitly assume that the general form of the distributions remains known, whereas non-parametric methods, such as the sign and Wilcoxon tests, make only mild assumptions about the underlying distributions and are therefore regarded as more conservative approaches \cite{Wilcoxon,gulbook}.\\
Minimax robust hypothesis testing (MRHT) offers an intermediate framework between parametric and non-parametric methods. In MRHT, the true distribution of the observed data, $G_j$, is assumed to belong to an uncertainty class $\mathscr{G}_j$, whose size is determined by a robustness parameter $\epsilon_j$. By adjusting these parameters, one can balance robustness against detection power. The choice of uncertainty classes is typically application-dependent, with common formulations being model-based (e.g., $\epsilon$-contamination models) or distance-based (e.g., sets defined through the KL-divergence) \cite{levy09}. The designer’s objective is then to determine a decision rule $\hat{\delta}$ that minimizes a predefined risk function evaluated under the least favorable distributions (LFDs)\footnote{For brevity, no notational distinction is made between least favorable distributions and their corresponding densities unless confusion may arise.}. Under mild regularity conditions, such tests are optimal in the minimax sense, guaranteeing the best possible detection performance under the assumed model uncertainties.\\
Robustness can also be characterized in terms of sample size. Depending on the underlying uncertainty model, minimax robust tests may exist either for finite samples or only in the asymptotic regime. For example, in the $\epsilon$-contamination model and total variation neighborhoods, minimax robust tests exist for any finite sample size~\cite{hube65,HuberStrassen1973}, whereas for formulations based on divergence measures, only asymptotically minimax robust solutions are guaranteed~\cite{dabak,moment}. This distinction motivates the development of a unified framework that systematically links finite- and large-sample robustness.

\subsection{Related work}
The foundations of robust hypothesis testing trace back to the seminal works of P.~J.~Huber, who in 1965 introduced a robust version of the likelihood ratio test for the $\epsilon$-contamination and total variation classes of probability distributions~\cite{hube65}. Huber derived the corresponding least favorable distributions and showed that the clipped likelihood ratio test constitutes the minimax robust solution for both uncertainty classes. This line of research was later extended by Huber and Strassen~\cite{hube68} and further generalized to the framework of $2$-alternating capacities~\cite{hube73}, establishing the theoretical foundation of minimax robust detection. Nevertheless, it was later shown that minimax robust tests do not always exist, for instance when uncertainty classes are defined using the KL-divergence~\cite{gul5}.\\
One natural extension of the basic uncertainty formulations is to construct classes that incorporate partial prior knowledge about the true distributions, such as their approximate shapes, supports, or low-order moments. Among these, moment classes specify the uncertainty in terms of finitely many statistical moments of the distributions~\cite{moment}, allowing direct control of mean and variance deviations while maintaining analytical tractability. The $p$-point classes~\cite{elsawy,vastola} provide an alternative description by partitioning the probability domain into $p$ disjoint regions with prescribed probability masses, which enables flexible modeling of multimodal or asymmetric deviations. Band models, originally introduced by Kassam~\cite{kassamband} and later refined by \mbox{Fau\ss}~et al.~\cite{fauss}, instead define upper and lower bounds on the admissible densities, thereby encoding approximate shape and location information. These formulations offer complementary ways of representing partial distributional knowledge and serve as a foundation for many subsequent developments in robust detection and estimation.\\
Several studies have applied these uncertainty formulations to practical detection problems and extended them toward data-driven settings. Early works such as~\cite{martin,kassam2,martin2} investigated clipped-likelihood and mixture-based detectors under Gaussian and contaminated noise models. Variants of moment and $p$-point uncertainty sets have been used across disciplines, including robust decision-making in finance~\cite{smith}, admission control~\cite{brichet}, and queueing systems~\cite{johnson}. More recent lines of research have connected these formulations with distributionally robust optimization~\cite{RahimianMehrotra2019}, enabling tractable convex approximations under Wasserstein~\cite{GaoXie2018} and Sinkhorn-type metrics~\cite{WangGaoXie2024}. Related extensions include kernel-based uncertainty sets~\cite{SunZou2022,SchrabKim2024}, empirical distribution approaches~\cite{WangGaoXie2022}, and adversarially robust testing frameworks~\cite{PuranikMadhowPedarsani2021}. Together, these developments highlight a continuous progression towards more comprehensive and versatile frameworks for robust decision-making under distributional uncertainty.

\subsection{Summary of the paper and its contributions}
In our earlier work, we showed that asymptotically minimax robust tests can be systematically designed by solving the optimization problem  
\begin{equation*}
\min_{u\in(0,1)} \max_{(G_0,G_1)\in\mathscr{G}_0\times \mathscr{G}_1}D_u(G_0,G_1),
\end{equation*}  
where  
\begin{equation*}
D_u(G_0,G_1)=\int_{\Omega} g_1^u g_0^{1-u} \, d\mu
\end{equation*}  
is the so-called $u$-affinity \cite{gularxiv}.\\  
In this paper, it is demonstrated that asymptotic minimax theory not only provides a principled framework in its own right, but also yields a direct analytical route to derive finite-sample minimax robust tests — whenever such tests exist. This connection enables the exact computation of LFDs without requiring heuristic constructions. Moreover, the same theoretical foundation also allows for numerical computation of LFDs in cases where analytical forms are not available; moment and p-point uncertainty classes are treated as two representative examples. The main contributions of this work are as follows:
\begin{enumerate}
\item It is proven that whenever a finite-sample minimax robust test exists, it coincides with the test derived from asymptotic minimax theory (see Proposition~\ref{thm:AMR_implies_FMR_conditional}). This unifies the treatment of finite and infinite sample regimes.
\item Based on this equivalence, finite-sample minimax robust tests are analytically derived for two uncertainty classes: total variation distance and band model. In each case, the least favorable distributions and robust likelihood ratio functions (LRFs) are obtained in parametric form.
\item For the total variation case, the derived LFDs generalize Huber's results \cite{hube65} by allowing asymmetric robustness levels, i.e., different contamination radii $\epsilon_0\neq \epsilon_1$ for the two hypotheses $\mathcal{H}_0$ and $\mathcal{H}_1$ (see Theorem~\ref{theorem31}).
\item It is shown that two special cases of the band model yield distinct versions of the $\epsilon$-contamination neighborhood, each of which admits a clipped likelihood ratio test (CLRT) as the minimax robust solution (see Theorems~\ref{theorem1} and \ref{theorem2}). One of these cases was previously studied by Huber \cite{hube65}; the other is newly derived in this work. It is also proven that both are single-sample minimax robust (see Theorem~\ref{theoremeps}), and their intersection corresponds to the general form of the band model.
\item In the symmetric total variation case, our formulation reveals that the clipping thresholds necessarily satisfy $t_l t_u=1$, implying that the family of least favorable distributions is intrinsically one-dimensional. This observation clarifies and sharpens the classical construction of Huber, in which the symmetry-induced parameter reduction is not made explicit.
\item Taken together, these results provide a theoretical foundation for several previously heuristic constructions, such as the designs proposed by Huber \cite{hube65} and Kassam \cite{kassamband}, by showing how they emerge naturally from the asymptotic theory.
\end{enumerate}

\subsection{Outline of the paper}
The rest of the paper is organized as follows. In Section~\ref{sec2}, single-sample, finite-sample and asymptotic minimax robustness are defined, the connection between asymptotic and finite-sample minimax robustness is established and the problem statement is made. In Section~\ref{sec3}, least favorable distributions and the correesponding robust likelihood functions are derived in closed parametric forms for the total variation neighborhood and the band model. The parameters can be obtained by solving a system of non-linear equation, leading to the asymptotic minimax robust tests. In Section~\ref{sec4} the formulation of asymptotic minimax robustness is extended to the LFDs which cannot be obtained in closed form and numerical methods are required. Moment classes and p-point classes are presented as the two examples. In Section~\ref{sec5}, simulations are performed to evaluate and exemplify the theoretical derivations. Finally in Section~\ref{sec6}, the paper is concluded.

\subsection{Notations}
The following notations are applied throughout the paper. Upper case symbols are used for probability distributions and random variables, and the corresponding lower case symbols denote the density functions and observations, respectively. Boldface symbols are used for the sequence of random variables, sequence of observations or joint functions. The hypotheses $\mathcal{H}_0$ and $\mathcal{H}_1$ are associated with the nominal probability measures $F_0$ and $F_1$, whereas the corresponding actual distributions are denoted by $G_0$ and $G_1$. The sets of probability distributions are denoted by $\mathscr G_0$ and $\mathscr G_1$. Every probability measure, e.g.\ $G[\cdot]$, is associated with its distribution function $G(\cdot)$ and the density function $g(\cdot)$. The symbol $\mathscr{M}$ denotes the set of all distribution functions on $\Omega$. The notation $\hat{(\cdot)}$ indicates the least favorable distributions $\hat {G}_j\in \mathscr G_j$, the corresponding densities $\hat{g}_j$, or the robust likelihood ratio test $\hat{\delta}$. The expected value of a random variable $Y\sim G_j$ is denoted by $\mathbb{E}_{G_j}[Y]$. The argument (value on the domain) of the subsequent operation is denoted by $\arg$.

\section{Linking Asymptotic Minimax Theory to Finite-Sample Tests}\label{sec2}
Let $\mathscr{M}$ denote the set of all probability measures on $\Omega$, and let $\mathscr{G}_0 \subset \mathscr{M}$ and $\mathscr{G}_1 \subset \mathscr{M}$ be two uncertainty classes associated with the hypotheses $\mathcal{H}_0$ and $\mathcal{H}_1$, respectively. Consider a sequence of $n$ independent and identically distributed (i.i.d.) random variables $\boldsymbol{Y} = (Y_1,\ldots,Y_n)$, each taking values in $\Omega$. Under the hypothesis $\mathcal{H}_j$, the distribution of $Y_k$ is denoted by $G_j \in \mathscr{G}_j$, and the binary decision problem is to determine which of the hypotheses
\begin{align}
\mathcal{H}_0 &: Y_k \sim G_0,\quad G_0 \in \mathscr{G}_0, \nonumber\\
\mathcal{H}_1 &: Y_k \sim G_1,\quad G_1 \in \mathscr{G}_1,
\end{align}
is true for $k \in \{1,\ldots,n\}$.
Let $\delta : \boldsymbol{Y} \mapsto \{0,1\}$ denote a decision rule, with false-alarm probability $P_F = G_0[\delta(\boldsymbol{Y})=1]$ and miss-detection probability $P_M = G_1[\delta(\boldsymbol{Y})=0]$. For a given prior probability $\pi_0 = P(\mathcal{H}_0)$, the error probability is defined as
\begin{equation}
P_E(\delta,G_0,G_1)
    = \pi_0 P_F(\delta,G_0) + (1 - \pi_0) P_M(\delta,G_1).
\end{equation}
The minimax formulation seeks a decision rule $\hat{\delta}$ together with least favorable distributions $(\hat{G}_0,\hat{G}_1)$ that solve
\begin{equation}\label{eq10}
\min_{\delta} \max_{(G_0,G_1)\in \mathscr{G}_0\times\mathscr{G}_1}
P_E(\delta,G_0,G_1).
\end{equation}
Let $\hat{l} = \hat{g}_1 / \hat{g}_0$ denote the likelihood ratio induced by the least favorable distributions, define
$X_k = \log \hat{l}(Y_k)$, and let the empirical mean be
\[
S_n(\boldsymbol{X}) = \frac{1}{n}\sum_{k=1}^n X_k.
\]
Since the minimax problem \eqref{eq10} is solved by a likelihood ratio test based on $(\hat{G}_0,\hat{G}_1)$, the associated decision rule takes the form
\begin{equation}\label{eq115}
\hat{\delta}(\boldsymbol{X}) = 
\begin{cases}
0, & S_n(\boldsymbol{X}) < t,\\[2pt]
1, & S_n(\boldsymbol{X}) \geq t,
\end{cases}
\end{equation}
where $t \in \mathbb{R}$ is a threshold chosen to minimize the worst-case error probability.\\  
In the remainder of this section the asymptotic setting $n \to \infty$ is considered. The behaviour of $S_n(\boldsymbol{X})$ under $G_0$ and $G_1$, together with its associated large-deviation properties, provides the foundation for connecting asymptotic minimax robustness with finite-sample minimax robustness in the subsequent subsections. 
\subsection{Minimax Robustness Concepts}

\subsubsection{Single-sample minimax robustness (SMR)}
Single-sample minimax robustness characterizes optimality against worst-case
distributional perturbations when only a \emph{single} observation is available.

\begin{defn}[Single-sample minimax robustness]\label{defn:SMR}
Let $n=1$ and let $Y=Y_1$. Suppose there exist distributions 
$\hat{G}_0\in \mathscr{G}_0$ and $\hat{G}_1\in \mathscr{G}_1$ such that the 
likelihood ratio $\hat{l}(Y)=\hat{g}_1(Y)/\hat{g}_0(Y)$ satisfies
\begin{align}\label{eq:SMR-ineq}
G_0\!\left[\hat{l}(Y)<t\right] &\;\ge\;
\hat{G}_0\!\left[\hat{l}(Y)<t\right], \nonumber\\
G_1\!\left[\hat{l}(Y)<t\right] &\;\le\;
\hat{G}_1\!\left[\hat{l}(Y)<t\right],
\end{align}
for all $t\in\mathbb{R}$ and all $(G_0,G_1)\in \mathscr{G}_0\times \mathscr{G}_1$. Then the likelihood–ratio test induced by $(\hat{G}_0,\hat{G}_1)$, which solves the single-sample minimax problem~\eqref{eq10}, is called \emph{single-sample minimax robust (SMR)} \cite[p.~1754]{hube65}.
\end{defn}
The existence of LFDs depends on the choice of uncertainty classes $\mathscr{G}_0$ and $\mathscr{G}_1$. For instance, SMR tests exist for $\epsilon$-contamination and total variation classes~\cite{hube65}, whereas they fail to exist for KL-divergence-based neighborhoods~\cite{gul5}. Even if no minimax solution exists within the class of deterministic decision rules, a unique minimax rule can still be obtained over randomized decision rules~\cite{gul5}. However, the information contained in the randomization does not carry over to products of likelihood ratios, and therefore cannot be directly extended to multiple samples. 

\subsubsection{Finite-sample minimax robustness (FMR)}
A test is called \emph{finite-sample minimax robust (FMR)} if it is minimax robust for every finite sample size $n<\infty$; equivalently, the single-sample stochastic ordering conditions~\eqref{eq:SMR-ineq} hold for the joint likelihood ratio based on $\boldsymbol{Y}$.

\subsubsection{Asymptotic minimax robustness (AMR)}
In the asymptotic regime, the behavior of $S_n$ is characterized through large-deviation principles. The minimax criterion compares the exponential decay rates of the error probabilities under all $(G_0,G_1)\in\mathscr{G}_0\times\mathscr{G}_1$ with those achieved under a candidate pair of least favorable distributions. The following regularity assumptions ensure that the comparison of error exponents is meaningful for all distributions in the uncertainty classes.

\begin{assumption}[Uniform exponential moments]\label{asm1}
There exists $\varepsilon>0$ such that the random variable $X_k = \log(\hat{l}(Y_k))$ has a finite moment generating function (MGF)
\begin{equation}
\mathbb{E}_{G_j}\!\left[e^{u X_k}\right] < \infty,
\qquad \text{for all } |u|<\varepsilon,\; j\in\{0,1\},
\end{equation}
for every $(G_0,G_1)\in\mathscr{G}_0\times\mathscr{G}_1$. This ensures that the corresponding Cramér rate functions are finite and well-defined.
\end{assumption}

\begin{assumption}[Threshold separation]\label{asm2}
There exists a real number $t$ such that
\begin{equation}\label{eq:thresh-sep}
\mathbb{E}_{G_0}[X_k] < t < \mathbb{E}_{G_1}[X_k],
\end{equation}
for all $(G_0,G_1)\in\mathscr{G}_0\times\mathscr{G}_1$.
This guarantees that both false-alarm and miss-detection exponents are positive.
\end{assumption}

\begin{kor}\label{corollary1}
Any of the following conditions is sufficient for \eqref{eq:thresh-sep} to hold:
\begin{enumerate}
\item There exist LFDs $(G_0,G_1)\in\mathscr{G}_0\times\mathscr{G}_1$ satisfying single-sample minimax robustness. 
\item $\mathbb{E}_{\hat{G}_1}[X_k]<\mathbb{E}_{G_1}[X_k]$ and $\mathbb{E}_{\hat{G}_0}[X_k]>\mathbb{E}_{G_0}[X_k]$ for all $(G_0,G_1)\in \mathscr{G}_0\times \mathscr{G}_1$
\item  $\min\mathbb{E}_{G_1}[X_k]>\max\mathbb{E}_{G_0}[X_k]$ for all $(G_0,G_1)\in \mathscr{G}_0\times \mathscr{G}_1$
\end{enumerate}
Moreover, $1\Longrightarrow 2 \Longrightarrow 3$, and neither implication is reversible in general.
\end{kor}

\begin{defn}[Asymptotic minimax robustness]\label{def:AMR}
A test is called \emph{asymptotically minimax robust} if, for a threshold $t$ satisfying Assumption~\ref{asm2} and under Assumption~\ref{asm1}, the inequalities
\begin{equation}
\lim_{n\rightarrow\infty}
\frac{1}{n}\log G_0\!\left[S_n(\boldsymbol{X})>t\right]
\;\le\;
\lim_{n\rightarrow\infty}
\frac{1}{n}\log \hat{G}_0\!\left[S_n(\boldsymbol{X})>t\right],
\end{equation}
and
\begin{equation}
\lim_{n\rightarrow\infty}
\frac{1}{n}\log G_1\!\left[S_n(\boldsymbol{X})\le t\right]
\;\le\;
\lim_{n\rightarrow\infty}
\frac{1}{n}\log \hat{G}_1\!\left[S_n(\boldsymbol{X})\le t\right],
\end{equation}
hold for all $(G_0,G_1)\in\mathscr{G}_0\times\mathscr{G}_1$.
\end{defn}

\subsection{Relations Between Finite-Sample and Asymptotic Minimax Robustness}\label{sec2_2}
This section develops the connection between classical single-sample minimax robustness (SMR), its finite-sample extension (FMR), and the asymptotic minimax robustness (AMR). We show that SMR and FMR coincide, that FMR always implies AMR, and that AMR implies FMR under a mild uniqueness condition.
\begin{thm}\label{prop1}
If a single-sample minimax robust test exists, then a minimax robust test exists for every finite sample size $n<\infty$, with the same least favorable distributions and for the same uncertainty classes. Hence,
\begin{equation}
\text{SMR} \;\Longleftrightarrow\; \text{FMR}.
\end{equation}
\end{thm}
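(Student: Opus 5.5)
The plan is to prove the two directions separately. The direction FMR $\Rightarrow$ SMR is immediate: FMR requires the stochastic ordering conditions \eqref{eq:SMR-ineq} for the joint likelihood ratio at every finite $n$, and taking $n=1$ gives exactly Definition~\ref{defn:SMR}. The substance is SMR $\Rightarrow$ FMR.

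Assume $(\hat G_0,\hat G_1)$ satisfy \eqref{eq:SMR-ineq}. Then $X=\log\hat l(Y)$ is stochastically larger under $\hat G_0$ than under any $G_0\in\mathscr G_0$, and stochastically smaller under $\hat G_1$ than under any $G_1\in\mathscr G_1$. This holds because $\{\hat l<t\}=\{X<\log t\}$ and the ordering is required for all $t$.

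\textbf{Step 1 (the key step).} Show that stochastic order is preserved under sums of independent variables. Let $X_1,\dots,X_n$ be i.i.d.\ under $G_0$ and $\hat X_1,\dots,\hat X_n$ i.i.d.\ under $\hat G_0$. Then $\sum_k X_k$ is stochastically dominated by $\sum_k\hat X_k$. I would prove this by a quantile coupling. On a common probability space take $U_k$ i.i.d.\ uniform and set $X_k=F_{0}^{-1}(U_k)$ and $\hat X_k=\hat F_{0}^{-1}(U_k)$, where $F_0$ and $\hat F_0$ are the distribution functions of $X$ under $G_0$ and $\hat G_0$. The ordering $F_0\ge\hat F_0$ gives $X_k\le\hat X_k$ pointwise, so $\sum_k X_k\le\sum_k\hat X_k$ almost surely. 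Hence, for every $t$,
\[
G_0\Big[\textstyle\prod_k\hat l(Y_k)<t\Big]\ge\hat G_0\Big[\textstyle\prod_k\hat l(Y_k)<t\Big].
\]
The symmetric argument gives the reverse inequality under $\mathcal H_1$. An alternative would be induction on $n$ via convolution, conditioning on $\sum_{k<n}X_k$, but the coupling is cleaner.

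Some care is needed with the possible values $\hat l\in\{0,\infty\}$. In that case $X$ takes values in $[-\infty,\infty]$. The coupling still works with extended-real sums, because a $-\infty$ and a $+\infty$ cannot occur in the same product with positive probability under the ordering. I expect this, together with making the quantile coupling rigorous for general $\Omega$ (we only need the law of $X$ on the extended reals), to be the main technical obstacle, although it is routine.

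\textbf{Step 2.} Conclude minimaxity for fixed $n$. For the product likelihood ratio test with threshold $t$, and randomization $\gamma$ at the boundary if needed, the inequalities from Step 1 give
\[
P_F(\hat\delta,G_0)\le P_F(\hat\delta,\hat G_0),\qquad P_M(\hat\delta,G_1)\le P_M(\hat\delta,\hat G_1).
\]
Therefore $P_E(\hat\delta,G_0,G_1)\le P_E(\hat\delta,\hat G_0,\hat G_1)$ for all admissible pairs. The randomized boundary case follows by taking left and right limits in $t$ of the strict and non-strict versions of the ordering.

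Moreover, since $\hat\delta$ is the Bayes test for the product measures $(\hat G_0^n,\hat G_1^n)$, we have $P_E(\hat\delta,\hat G_0,\hat G_1)\le P_E(\delta,\hat G_0,\hat G_1)$ for every $\delta$. This establishes the saddle point
\[
P_E(\hat\delta,G_0,G_1)\le P_E(\hat\delta,\hat G_0,\hat G_1)\le P_E(\delta,\hat G_0,\hat G_1),
\]
so $\hat\delta$ solves \eqref{eq10} for every $n$. The least favorable distributions are the same $(\hat G_0,\hat G_1)$ and the uncertainty classes are unchanged, which proves SMR $\Leftrightarrow$ FMR.
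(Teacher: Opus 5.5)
Your proposal is correct and follows essentially the argument of Huber's Section~4, to which the paper's proof simply defers. The single-observation stochastic ordering of $\log\hat l$ is transferred to sums of independent copies (your quantile coupling is the standard proof of that lemma), which gives the ordering for the product likelihood ratio and hence the saddle point for every $n$, while FMR $\Rightarrow$ SMR is, as you note, just the case $n=1$ of the paper's definition of FMR.
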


\begin{IEEEproof}
See~\cite[Section~4]{hube65}.
\end{IEEEproof}

The next result recalls a fundamental equivalence between the stochastic ordering condition~\eqref{eq:SMR-ineq} and the minimization of $f$-divergences.

\begin{thm}[Equivalence of SMR and $f$-divergence minimization]\label{thm1}
Let $G_0$ and $G_1$ be distributions absolutely continuous with respect to a common measure~$\mu$ on~$\Omega$. For the $f$-divergence
\begin{equation}
D_f(G_0,G_1)=\int_\Omega f\!\left(\frac{g_0}{g_1}\right) g_1\, d\mu,
\end{equation}
where $f:\mathbb R_{\ge 0}\!\to\!\mathbb R$ is convex and satisfies $f(1)=0$, the following equivalence holds:
\begin{equation}\label{eq13}
(\hat G_0,\hat G_1) \text{ satisfies \eqref{eq:SMR-ineq}} 
\quad\Longleftrightarrow\quad
(\hat G_0,\hat G_1) \text{ minimizes } D_f
\end{equation}
over all $(G_0,G_1)\in\mathscr G_0\times\mathscr G_1$, for every twice continuously differentiable convex $f$.
\end{thm}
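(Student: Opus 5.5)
This is the Huber–Strassen type equivalence, and I would prove the two directions separately, starting from the same integral representation. For any pair $(G_0,G_1)\in\mathscr G_0\times\mathscr G_1$, condition~\eqref{eq:SMR-ineq} says that the distribution of $\hat l(Y)$ under $G_0$ is stochastically smaller than under $\hat G_0$, and larger under $G_1$ than under $\hat G_1$. I would first reduce $D_f$ to a statement about these distribution functions. The tool is the Choquet/Blackwell-type representation of a convex $f$ with $f\in C^2$:
\begin{equation*}
f(x)=f(1)+f'(1)(x-1)+\int_0^\infty f''(t)\,(t-x)^+\,dt\;-\;\text{(correction for } t<1\text{)},
\end{equation*}
i.e.\ $f$ is an integral of the "angle" functions $x\mapsto (x-t)^+$ and $x\mapsto(t-x)^+$ against the nonnegative measure $f''(t)\,dt$. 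Substituting this into the divergence gives
\begin{equation*}
D_f(G_0,G_1)=\int f''(t)\, D_{t}(G_0,G_1)\,dt,\qquad D_t(G_0,G_1)=\int (g_0-t g_1)^+\,d\mu \ \text{(up to affine terms)}.
\end{equation*}
Each $D_t$ is, up to constants, the minimal Bayes error between $G_0$ and $G_1$ for the prior ratio determined by $t$.

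\emph{($\Rightarrow$)} Assume~\eqref{eq:SMR-ineq}. For fixed $t$, the Bayes risk $\inf_\delta\{\pi_0 P_F+(1-\pi_0)P_M\}$ under $(G_0,G_1)$ is at most the risk of the test $\hat\delta_t=\mathbf 1\{\hat l\ge t'\}$ under $(G_0,G_1)$. By the stochastic ordering, that risk is at most the risk of $\hat\delta_t$ under $(\hat G_0,\hat G_1)$, and this in turn is the Bayes risk of $(\hat G_0,\hat G_1)$, because $\hat\delta_t$ is the likelihood ratio test there. Hence the Bayes risk of $(\hat G_0,\hat G_1)$ is maximal, and $D_t$ (which is an affine decreasing function of the Bayes risk) is minimized by $(\hat G_0,\hat G_1)$ for every $t$. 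Integrating against $f''\ge0$ then shows that $(\hat G_0,\hat G_1)$ minimizes $D_f$ for every such $f$. Randomization at atoms of $\hat l$ needs a short remark: replace strict inequalities by the appropriate left and right limits.

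\emph{($\Leftarrow$)} Assume $(\hat G_0,\hat G_1)$ minimizes $D_f$ for every $C^2$ convex $f$. Choose smooth convex approximations $f_k$ of the angle function $(x-t)^+$ (mollify it) and pass to the limit by monotone or dominated convergence. This shows that $(\hat G_0,\hat G_1)$ minimizes every $D_t$, i.e.\ maximizes every Bayes risk simultaneously. To recover the ordering~\eqref{eq:SMR-ineq} for a competitor $G_0$, I would use the standard perturbation argument: the classes are convex, so for $G_0^\lambda=(1-\lambda)\hat G_0+\lambda G_0$ the one-sided derivative at $\lambda=0$ of the Bayes risk (with $\hat G_1$ fixed) must be $\le 0$. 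That derivative equals
\begin{equation*}
\pi_0\big(G_0[\hat l\ge t]-\hat G_0[\hat l\ge t]\big),
\end{equation*}
because the optimal test at $\lambda=0$ is $\mathbf 1\{\hat l\ge t\}$ (envelope/Danskin argument). This gives the first inequality; the second follows symmetrically.

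The main obstacle is this converse direction. It needs convexity of $\mathscr G_j$, which is implicit in the paper's setting and would have to be stated. It also needs care with the envelope argument when $\hat l$ has atoms, since then the optimal test is not unique and the derivative is only one-sided. I would handle both issues by working with the distribution functions of $\hat l$ at continuity points and then extending to all $t$ by right-continuity.
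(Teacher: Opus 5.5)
Your proof is correct once the convexity hypothesis you flag is added, but it takes a different route from the paper. The paper gives no self-contained argument. It cites \cite[Section~6]{hube73}, where the equivalence is proved for $\int f^{*}\bigl(g_0/(g_0+g_1)\bigr)(g_0+g_1)\,d\mu$ with $f^{*}$ convex and $C^2$ on $(0,1)$. It then transfers this to the standard $D_f$ through the substitution $f^{**}(t)=t f\bigl((1-t)/t\bigr)$ of \"Osterreicher--Vajda. That is short, but it silently imports the Huber--Strassen setting: classes generated by $2$-alternating capacities, hence convex. The theorem as stated does not assume this.

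Your decomposition $D_f=\int f''(s)\,[c(s)-b_s(G_0,G_1)]\,ds$, with $b_s=\int\min(g_0,s g_1)\,d\mu$ and $c(s)=\min(1,s)$, reduces everything to Bayes risks and makes the logical structure visible. The direction ($\Rightarrow$) needs nothing about $\mathscr G_0,\mathscr G_1$, and in fact holds for every convex $f$ if $f''$ is read as a measure. The direction ($\Leftarrow$) genuinely needs convexity for your Danskin step. That caveat is not cosmetic. On $\Omega=\{a,b,c\}$, take $\mathscr G_0=\{\hat G_0\}$ with $\hat G_0$ uniform, $\hat G_1=(0.2,0.3,0.5)$, and $\mathscr G_1=\{\hat G_1,G_1'\}$ with $G_1'=(0.55,0.3,0.15)$. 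The likelihood ratio $(1.65,0.9,0.45)$ of $G_1'$ strictly majorizes $\hat l=(0.6,0.9,1.5)$ under the uniform $\hat G_0$. So $(\hat G_0,\hat G_1)$ minimizes every $D_f$, uniquely for strictly convex $f$. Yet $G_1'[\hat l<1]=0.85>0.5=\hat G_1[\hat l<1]$, so the converse fails without convexity.

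One detail needs care when you write it out. The tie sets $\{\hat l=t\}$ are not exceptional here: for clipped tests they are the entire clipping regions. Your one-sided derivative is therefore $G_0[\hat l>r]-\hat G_0[\hat l>r]-\int_{\{\hat l=r\}}(g_0-\hat g_0)^-\,d\mu\le 0$. Here $r$ is the reciprocal of the Bayes weight, so the threshold for $b_s$ is $1/s$ on $\hat l$. The tie term is at most $\hat G_0[\hat l=r]$, so it vanishes at the (dense) continuity points of the law of $\hat l$ under $\hat G_0$. You then recover $G_0[\hat l<t]\ge\hat G_0[\hat l<t]$ for every $t$ by letting $r\uparrow t$ along such points, that is, by left limits rather than right-continuity. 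The $G_1$ inequality follows symmetrically.
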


\begin{IEEEproof}
See Appendix~\ref{appendix0}.
\end{IEEEproof}

\begin{thm}[FMR implies AMR]\label{thm:FMR_implies_AMR}
Let $(\mathscr G_0,\mathscr G_1)$ be uncertainty classes for which a finite-sample minimax robust (FMR) test exists with LFDs $(\hat G_0,\hat G_1)$. Then an asymptotically minimax robust (AMR) test exists for the same uncertainty classes and with the same LFDs. In particular, 
\begin{equation}
\mathrm{FMR}\;\Longrightarrow\;\mathrm{AMR}.
\end{equation}
\end{thm}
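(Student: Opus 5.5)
The plan is to deduce AMR directly from the finite-sample stochastic ordering, taking exponential rates at the end. By Theorem~\ref{prop1}, FMR is equivalent to SMR, and the LFDs $(\hat G_0,\hat G_1)$ are the same ones that give the single-sample inequalities~\eqref{eq:SMR-ineq}. Huber's argument \cite[Section~4]{hube65} extends this ordering to $n$ samples. Under $\mathcal{H}_0$, the variable $X_k=\log\hat l(Y_k)$ is stochastically smaller under $G_0$ than under $\hat G_0$. Under $\mathcal{H}_1$, it is stochastically larger under $G_1$ than under $\hat G_1$. Sums of independent, stochastically ordered variables remain ordered; one can see this by coupling each $X_k$ through quantile transforms of a common uniform variable. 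Hence, for every $n$ and every threshold $t$,
\begin{equation*}
G_0\!\left[S_n(\boldsymbol{X})>t\right]\le \hat G_0\!\left[S_n(\boldsymbol{X})>t\right],\qquad
G_1\!\left[S_n(\boldsymbol{X})\le t\right]\le \hat G_1\!\left[S_n(\boldsymbol{X})\le t\right],
\end{equation*}
for all $(G_0,G_1)\in\mathscr G_0\times\mathscr G_1$. Both are cumulative-distribution comparisons of the scalar statistic $S_n$. The strict versus non-strict inequalities are handled by taking limits in $t$ from the appropriate side.

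Next I take $\frac1n\log$ of both sides. Since $\log$ is monotone, the inequalities survive for every $n$ and therefore in the limit. This leaves one thing to verify: the limits in Definition~\ref{def:AMR} must exist and be finite for a threshold satisfying Assumption~\ref{asm2}. For existence of a suitable threshold, I invoke Corollary~\ref{corollary1}(1), which says that SMR LFDs imply~\eqref{eq:thresh-sep}. So a $t$ with $\mathbb E_{G_0}[X_k]<t<\mathbb E_{G_1}[X_k]$ exists uniformly over the classes.

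Under Assumption~\ref{asm1}, the MGF of $X_k$ is finite near $0$ for every member of the classes. Cramér's theorem then gives, for each fixed $G_j$,
\begin{equation*}
\lim_{n\to\infty}\frac1n\log G_0[S_n>t]=-\Lambda^*_{G_0}(t),\qquad \lim_{n\to\infty}\frac1n\log G_1[S_n\le t]=-\Lambda^*_{G_1}(t),
\end{equation*}
where $\Lambda^*$ denotes the Legendre transform of the log-MGF. Because $t$ lies strictly on the correct side of the mean, $\Lambda^*$ is continuous at $t$ from the relevant side. This is what makes the open and closed half-lines give the same rate.

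The main obstacle is this last point. Cramér's theorem only gives the rate as a $\liminf$ over the interior of the set and a $\limsup$ over its closure. For the limit to exist, $\Lambda^*_{G_j}$ must be finite and continuous at $t$. That can fail if $t$ lies at the edge of the support of $X_k$ under $G_j$, for instance when $\hat l$ is clipped and $t$ equals $\log$ of a clipping level. My first response would be to note that the inequality between the finite-$n$ probabilities gives the desired ordering for $\limsup$ and $\liminf$ separately, so AMR holds with limits read in that sense. If genuine limits are required, I would choose $t$ in the interior of the range of $\log\hat l$ on which $\Lambda^*$ is finite. Since the interval allowed by Assumption~\ref{asm2} is open, such a choice is possible. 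With this, the conclusion $\mathrm{FMR}\Rightarrow\mathrm{AMR}$ follows with the same LFDs.
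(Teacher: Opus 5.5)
Your proof is correct, but it takes a different route from the paper. The paper never works with error probabilities. It passes from the single-sample ordering (Theorem~\ref{prop1}) to the Huber--Strassen characterization in Theorem~\ref{thm1}, and concludes that $(\hat G_0,\hat G_1)$ maximize $D_u$ for every $u\in(0,1)$, hence also at the minimizing $u^*$. It then reads AMR as ``threshold separation plus $D_{u^*}$-maximization'', which is the saddle-point characterization of $\min_u\max D_u$ imported from the earlier asymptotic work. Threshold separation comes from Corollary~\ref{corollary1}, exactly as in your argument.

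You instead verify Definition~\ref{def:AMR} directly. The $n$-sample ordering of $S_n$ gives both error-probability inequalities for every $n$ and every $t$, and these survive $\frac{1}{n}\log$ and $\liminf/\limsup$. As FMR is defined in the paper, this ordering is already the ordering of the joint likelihood ratio, so your coupling step just re-derives Huber's Section~4.

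Your route is elementary and self-contained relative to the stated definition, and it gives the inequalities for every threshold rather than a single one. It also avoids Theorem~\ref{thm1}, with its topological caveats, and the external link between exponent inequalities and $D_u$. The paper's route, in turn, delivers the structural fact that the FMR LFDs maximize $D_u$ simultaneously for all $u$. That fact is what Theorem~\ref{thm:AMR_implies_FMR_conditional} and the ``skip the $u$-minimization'' shortcut in Section~\ref{sec4} actually rely on, and your argument does not provide it.

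A minor point on your last paragraph: choosing $t$ in the interior of the range of $\log\hat l$ does not make every $\Lambda^*_{G_j}$ continuous at $t$, because the support of $X_k$ under $G_j$ varies with $G_j$. Nothing is lost, though. Your $\liminf/\limsup$ comparison already suffices, and in one dimension these half-line limits in fact always exist in $[-\infty,0]$: Cramér's theorem covers the interior of the support, and the edges can be computed explicitly.
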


\begin{IEEEproof}
Let $(\hat G_0,\hat G_1)$ be the LFDs generating the FMR test. By Definition~\ref{defn:SMR} and Theorem~\ref{prop1}, these distributions satisfy the stochastic ordering condition~\eqref{eq:SMR-ineq}. From Theorem~\ref{thm1}, the same pair minimizes every twice differentiable convex $f$-divergence $D_f$, and in particular minimizes $-D_u$ (equivalently maximizes $D_u$) for every $u\in(0,1)$, as well as $D_{\mathrm{KL}}$. The AMR definition requires two conditions: (i) the threshold separation condition~\eqref{eq:thresh-sep}, and (ii) LFDs that maximize $D_u$ for the minimizing $u$. Condition~(i) holds by Corollary~\ref{corollary1}, applied to the LFDs $(\hat G_0,\hat G_1)$. Condition~(ii) holds because $(\hat G_0,\hat G_1)$ maximize $D_u$ for all $u\in(0,1)$ by Theorem~\ref{thm1}. Hence the same LFDs satisfy the AMR requirements, completing the proof.
\end{IEEEproof}

\begin{rem}
The AMR formulation requires the finite–MGF condition of Assumption~\ref{asm1}. If the uncertainty classes $\mathscr G_0,\mathscr G_1$ contain pathological elements (e.g., point masses located at zeros of the nominal densities) that produce infinite MGFs, these distributions are never least favorable: they yield infinite $f$–divergences or zero Chernoff-type exponents, and therefore cannot minimize any convex $f$–divergence or maximize $D_u$. Such elements may be removed from the uncertainty classes without affecting the LFDs, so the effective uncertainty classes (i.e., those that contain all potential minimizers of the robust problem) automatically satisfy Assumption~\ref{asm1}. Thus the LFDs $(\hat G_0,\hat G_1)$ obtained from the FMR problem also satisfy the AMR regularity requirements. An analogous implicit restriction is already present in Huber’s SMR inequalities: any pair of distributions for which $g_0$ vanishes on a set where $G_1$ places positive mass leads to nonintegrable likelihood ratios or divergent integrals, and therefore cannot be least favorable.
\end{rem}

\begin{thm}[AMR implies FMR conditionally]\label{thm:AMR_implies_FMR_conditional}
Let $(\mathscr G_0,\mathscr G_1)$ be uncertainty classes for which an AMR solution exists. Let $(\overline G_0,\overline G_1)$ be the AMR LFDs corresponding to the minimizing parameter $u^*\in(0,1)$, i.e.\ they maximize $D_{u^*}$ over $\mathscr G_0\times\mathscr G_1$. Assume that for $u^*$ the maximization of $D_{u^*}$ admits a unique solution (up to $\mu$-a.e.\ equality).  
If a finite-sample minimax robust (FMR) test exists for the same uncertainty classes, then its LFDs must coincide a.e.\ with $(\overline G_0,\overline G_1)$. Thus, whenever FMR exists,
\begin{equation}
\mathrm{AMR}\;\Longrightarrow\;\mathrm{FMR}.
\end{equation} 
\end{thm}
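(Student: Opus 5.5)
The plan is to combine Theorems~\ref{prop1}, \ref{thm1} and \ref{thm:FMR_implies_AMR} with the uniqueness hypothesis on the maximizer of $D_{u^*}$. Suppose an FMR test exists, with LFDs $(\hat G_0,\hat G_1)$.

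First, by Theorem~\ref{prop1}, the pair $(\hat G_0,\hat G_1)$ is also SMR, so it satisfies the stochastic ordering~\eqref{eq:SMR-ineq}. By Theorem~\ref{thm1}, it then minimizes every $f$-divergence $D_f$ with $f$ convex and twice continuously differentiable.

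Next, apply this to $f_u(x)=1-x^{1-u}$ for a fixed $u\in(0,1)$. This $f_u$ is convex and smooth on $(0,\infty)$ with $f_u(1)=0$, and
\[
D_{f_u}(G_0,G_1)=\int_\Omega \Bigl(1-\bigl(\tfrac{g_0}{g_1}\bigr)^{1-u}\Bigr) g_1\,d\mu = 1-D_u(G_0,G_1).
\]
Minimizing $D_{f_u}$ is therefore the same as maximizing $D_u$. Hence $(\hat G_0,\hat G_1)$ maximizes $D_u$ over $\mathscr G_0\times\mathscr G_1$ simultaneously for all $u\in(0,1)$, and in particular for $u=u^*$.

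The third step invokes uniqueness. Both $(\overline G_0,\overline G_1)$ and $(\hat G_0,\hat G_1)$ are maximizers of $D_{u^*}$. The assumption that this maximizer is unique up to $\mu$-a.e.\ equality forces $\hat g_j=\overline g_j$ $\mu$-a.e.\ for $j=0,1$. Consequently the robust likelihood ratios $\hat l$ and $\bar l=\overline g_1/\overline g_0$ agree a.e., and so do the induced tests~\eqref{eq115}. In other words, the AMR construction already yields the FMR test, which is the asserted implication $\mathrm{AMR}\Rightarrow\mathrm{FMR}$ under the existence hypothesis.

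The main obstacle is the consistency of the minimizing parameter $u^*$. Since $(\hat G_0,\hat G_1)$ maximizes $D_u$ for every $u$, the function
\[
u\mapsto\max_{(G_0,G_1)\in\mathscr G_0\times\mathscr G_1} D_u(G_0,G_1)
\]
equals $u\mapsto D_u(\hat G_0,\hat G_1)$. Its minimizer therefore agrees with that of the AMR problem, so no circularity arises. Uniqueness is needed only at $u^*$, which is exactly what is assumed.

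A second technical point is the domain of $f_u$. It is defined on $\mathbb R_{\ge0}$ but is not $C^2$ at $0$. I would handle this either by noting that Theorem~\ref{thm1} only needs $C^2$ on $(0,\infty)$, or by approximating $f_u$ with smooth convex functions and passing to the limit via monotone convergence. In the latter case the maximizing property survives as a limit of minimizing inequalities, which holds pointwise for each fixed pair.
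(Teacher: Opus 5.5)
Your proposal is correct and follows essentially the same route as the paper: the FMR LFDs satisfy the SMR ordering, hence by Theorem~\ref{thm1} minimize all $f$-divergences and in particular maximize $D_{u^*}$, and the uniqueness assumption then forces a.e.\ coincidence with $(\overline G_0,\overline G_1)$. Your explicit choice $f_u(x)=1-x^{1-u}$ with $D_{f_u}=1-D_u$, together with your handling of its non-smoothness at $0$, spells out a step that the paper simply asserts.
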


\begin{IEEEproof}
Let $(\hat G_0,\hat G_1)$ denote the LFDs of an FMR test. By Theorem~\ref{thm1}, they maximize $D_{u^*}$, since FMR implies minimization of all $f$-divergences. Because the $D_{u^*}$-maximizer is unique by assumption, any two maximizers must coincide $\mu$-a.e. Thus $(\hat G_0,\hat G_1) = (\overline G_0,\overline G_1)$ almost everywhere.
\end{IEEEproof}

\begin{rem}
If the maximizer of $D_{u^*}$ over $\mathscr G_0\times\mathscr G_1$ is not unique, the above argument shows only that any FMR LFD pair must belong to the set of $D_{u^*}$-maximizers.  
In particular, FMR LFDs coincide (up to $\mu$-a.e.\ equality) with \emph{some} AMR LFDs, but not necessarily with a distinguished pair $(\overline G_0,\overline G_1)$.
\end{rem}


The uniqueness assumption in Theorem~\ref{thm:AMR_implies_FMR_conditional} is in fact mild; under natural convexity and compactness conditions, $D_u$ admits a unique maximizer, as formalized in the following lemma.

\begin{lem}[Uniqueness of $D_u$-maximizer]
\label{lem:unique_maximizer}
Let $(\Omega, \mathcal{F}, \mu)$ be a $\sigma$-finite measure space. Let $\mathscr{G}_0$ and $\mathscr{G}_1$ be sets of probability measures $G_0$ and $G_1$ absolutely continuous with respect to $\mu$, with densities $g_0=\mathrm{d}G_0/\mathrm{d}\mu$ and $g_1=\mathrm{d}G_1/\mathrm{d}\mu$ respectively. Assume $\mathscr{G}_0 \times \mathscr{G}_1$ is convex and compact in some topology. For $u \in (0,1)$, recall that
\begin{equation}
D_u(G_0, G_1) = \int_\Omega g_1^u \, g_0^{1-u} \, d\mu.
\end{equation}
If $\mathscr{G}_0 \cap \mathscr{G}_1 = \emptyset$ and
\begin{equation}
\max_{(G_0,G_1) \in \mathscr{G}_0 \times \mathscr{G}_1} D_u(G_0, G_1) > 0,
\end{equation}
then $D_u$ admits a unique maximizer $(G_0^*, G_1^*)$ in $\mathscr{G}_0 \times \mathscr{G}_1$, up to $\mu$-null sets.
\end{lem}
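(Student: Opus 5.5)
The plan is to separate existence from uniqueness, and to get uniqueness from the concavity of the integrand of $D_u$.

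For \emph{existence}, we use compactness of $\mathscr G_0\times\mathscr G_1$. We show that $D_u$ is upper semicontinuous in the given topology, for instance by writing $D_u(G_0,G_1)=\inf$ over finite measurable partitions of $\sum_i G_1[A_i]^u G_0[A_i]^{1-u}$. This is the standard variational representation of the Hellinger-type affinity; it is an infimum of functions that are continuous in the setwise/weak sense, hence u.s.c. A maximizer $(G_0^*,G_1^*)$ therefore exists. The hypothesis $\max D_u>0$ ensures that the supports of $g_0^*$ and $g_1^*$ overlap on a set of positive $\mu$-measure, which is needed below.

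For \emph{uniqueness}, consider $\phi(a,b)=a^u b^{1-u}$ on $\mathbb R_{\ge 0}^2$. It is concave and positively homogeneous, and $\phi(\lambda z+(1-\lambda)z')=\lambda\phi(z)+(1-\lambda)\phi(z')$ for $\lambda\in(0,1)$ holds if and only if $z$ and $z'$ are proportional (one may be zero). Suppose $(G_0,G_1)$ and $(G_0',G_1')$ are two maximizers with common value $M>0$. By convexity of the class, their midpoint is admissible, and pointwise concavity gives $D_u(\text{midpoint})\ge M$. Hence equality holds $\mu$-a.e., which forces $(g_1,g_0)(x)$ and $(g_1',g_0')(x)$ to be proportional for a.e.\ $x$. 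Consequently there is a measurable $c\ge 0$ with $g_j'=c\,g_j$ a.e.\ on $\{g_0g_1>0\}$ for both $j$, so the two maximizers induce the same likelihood ratio $g_1/g_0$ on the common support. The set of maximizers is convex, and all of its elements share this likelihood ratio.

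The \emph{main obstacle} is to upgrade ``same likelihood ratio'' to ``same densities'', i.e.\ to show $c\equiv 1$ a.e. Homogeneity alone does not give this, so the plan must use both normalization and disjointness. I would first try a first-order (directional derivative) argument along the segment $G^\lambda=(1-\lambda)G+\lambda G'$. At a maximizer, the derivative of $D_u$ in any feasible direction is $\le 0$, and it equals $\int \big[u\,l^{u-1}(g_1'-g_1)+(1-u)\,l^{u}(g_0'-g_0)\big]d\mu$ with $l=g_1/g_0$. Substituting $g_j'=c\,g_j$ and using $\int g_j'=\int g_j=1$, this leads to $\int (c-1)\,\phi(g_1,g_0)\,d\mu=0$. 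The aim is then to combine this with the masses placed outside $\{g_0g_1>0\}$, where the proportionality gives no constraint, to conclude $c=1$. Disjointness of $\mathscr G_0$ and $\mathscr G_1$ would be used to exclude the degenerate direction in which mass is shifted consistently in both densities without changing any admissibility constraint. If this argument fails in general, the fallback is to prove uniqueness of the likelihood ratio $\hat l$, which is all that Theorem~\ref{thm:AMR_implies_FMR_conditional} actually needs for the test. One would then state that the densities are unique up to this equivalence, and add a strict-convexity or extremality condition on the classes to recover full uniqueness.
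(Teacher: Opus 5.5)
The step that fails is the one you flag as the main obstacle, and it cannot be repaired: the lemma is false as stated, so no argument will give $c\equiv 1$. Take $\Omega=\{1,2,3\}$ with counting measure, $\mathscr G_0=\{p:\ p_1=1/2\}$ and $\mathscr G_1=\{q:\ q_1=1/4\}$ (probability vectors). Both classes are convex, compact and disjoint. By H\"older's inequality
\[
D_u(p,q)\le (1/2)^{1-u}(1/4)^u+(p_2+p_3)^{1-u}(q_2+q_3)^u=(1/2)^{1-u}(1/4)^u+(1/2)^{1-u}(3/4)^u>0,
\]
with equality iff $(p_2,p_3)\propto(q_2,q_3)$. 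Hence every pair $p=(1/2,s/2,(1-s)/2)$, $q=(1/4,3s/4,3(1-s)/4)$ with $s\in[0,1]$ is a maximizer. Your proportionality analysis identifies exactly this obstruction: $\phi(a,b)=a^ub^{1-u}$ is positively homogeneous of degree one, hence linear along rays and not strictly concave. The paper's proof in Appendix~\ref{appendix1} asserts ``strict concavity of $(g_0,g_1)\mapsto g_1^ug_0^{1-u}$''. It therefore has the same hole without acknowledging it, and your analysis is the more careful of the two.

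Your proposed repair cannot work either. Once two maximizers are pointwise proportional, $D_u$ is affine on the whole segment between them. The first-order identity $\int (c-1)\phi\,d\mu=0$ is then just $D_u(G')=D_u(G)$ restated and carries no information. Disjointness of $\mathscr G_0$ and $\mathscr G_1$ only rules out $G_0=G_1$; it says nothing about rescaling $g_0$ and $g_1$ by a common factor, as the example shows.

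The same mechanism appears more generally. Suppose a maximizer has a set of positive measure on which $g_1/g_0$ is constant and the constraints are slack there, e.g.\ strictly inside the band in Section~\ref{sec6_band}. Rescaling both densities on that set by a common $c$ close to $1$, with $\int c\,g_0\,d\mu=\int g_0\,d\mu$, yields further maximizers. This is consistent with the undetermined functions $h_1,h_2$ in the Type-B LFDs.

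What is true, and what your argument does prove, is your fallback. All maximizers share the same likelihood ratio $g_1/g_0$ at every point where neither $(g_0,g_1)$ nor $(g_0',g_1')$ vanishes. That is what matters for identifying the test in Theorem~\ref{thm:AMR_implies_FMR_conditional}. Uniqueness of the densities themselves needs an additional hypothesis that rules out such common rescalings.
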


\begin{IEEEproof}
See Appendix~\ref{appendix1}.
\end{IEEEproof}


\begin{kor}[Uniqueness transfer from AMR to FMR]\label{cor:unique_transfer}
If for the minimizing parameter $u^*$ the maximizer of $D_{u^*}$ is unique, then any existing FMR LFDs must coincide $\mu$-a.e.\ with the AMR LFDs.  
Thus AMR uniqueness implies uniqueness of all finite-sample LFDs whenever they exist.
\end{kor}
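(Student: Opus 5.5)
The argument combines Theorem~\ref{thm1}, Theorem~\ref{prop1} and the uniqueness hypothesis, essentially as in the proof of Theorem~\ref{thm:AMR_implies_FMR_conditional}.

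Fix a finite sample size $n$ and suppose an FMR test exists with LFDs $(\hat G_0,\hat G_1)$. By Theorem~\ref{prop1}, FMR coincides with SMR. Hence $(\hat G_0,\hat G_1)$ satisfy the single-sample stochastic ordering \eqref{eq:SMR-ineq}, and these are the same distributions for every $n$. Theorem~\ref{thm1} then says that this pair minimizes every twice continuously differentiable convex $f$-divergence over $\mathscr G_0\times\mathscr G_1$.

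Take $f(x)=1-x^{1-u^*}$, which is convex on $\mathbb R_{\ge 0}$ and smooth on $(0,\infty)$, with $f(1)=0$. Then
\begin{equation*}
D_f(G_0,G_1)=\int_\Omega \Bigl(1-\bigl(g_0/g_1\bigr)^{1-u^*}\Bigr)g_1\,d\mu = 1-D_{u^*}(G_0,G_1).
\end{equation*}
So minimizing $D_f$ is the same as maximizing $D_{u^*}$. It follows that $(\hat G_0,\hat G_1)$ is a maximizer of $D_{u^*}$ over $\mathscr G_0\times\mathscr G_1$.

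By hypothesis, the maximizer of $D_{u^*}$ is unique up to $\mu$-null sets, and the AMR LFDs $(\overline G_0,\overline G_1)$ are that maximizer. Therefore $\hat g_j=\overline g_j$ $\mu$-a.e. for $j=0,1$. This holds for any FMR LFD pair and for every $n$, which gives uniqueness of all finite-sample LFDs whenever they exist. Equivalently, any two FMR LFD pairs both equal $(\overline G_0,\overline G_1)$ a.e., so they equal each other.

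The delicate step is applying Theorem~\ref{thm1} to a function that is only smooth on the open half-line, since $x^{1-u^*}$ is not differentiable at $0$. I would handle this in one of two ways. The first option is to note that the appendix argument only needs convexity on $(0,\infty)$, with the boundary handled by the usual conventions for $0\cdot f(0/0)$. Alternatively, approximate $f$ by twice continuously differentiable convex functions $f_k\downarrow f$ and pass to the limit by monotone convergence: the inequalities $D_{f_k}(\hat G_0,\hat G_1)\le D_{f_k}(G_0,G_1)$ hold for every $k$, so they survive in the limit. With this point settled, the corollary follows directly from Theorem~\ref{thm:AMR_implies_FMR_conditional}.
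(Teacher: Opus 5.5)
Your proposal is correct and is essentially the paper's argument: the paper declares the corollary immediate from Theorem~\ref{thm:AMR_implies_FMR_conditional}, whose proof is exactly your chain (FMR LFDs satisfy the SMR ordering, hence by Theorem~\ref{thm1} minimize every admissible $f$-divergence, hence maximize $D_{u^*}$, hence coincide $\mu$-a.e.\ with the unique maximizer). Your explicit choice $f(x)=1-x^{1-u^*}$ with $D_f=1-D_{u^*}$ also fills a regularity gap the paper passes over when it says FMR LFDs ``in particular'' maximize $D_u$; the monotone approximation you propose works with smooth convex functions such as $f_k(x)=1+k^{u^*-1}-(x+1/k)^{1-u^*}$, which decrease to $f$, and subtracting the constant $f_k(1)$ only shifts $D_{f_k}$.
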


\begin{IEEEproof}
Immediate from Theorem~\ref{thm:AMR_implies_FMR_conditional}.
\end{IEEEproof}

\subsection{Poblem Formulation}\label{optimization}
The design of asymptotically minimax robust tests is based on the optimization problem
\begin{equation}
\min_{u\in(0,1)} \; \max_{(G_0,G_1)\in\mathscr{G}_0\times \mathscr{G}_1} D_u(G_0,G_1).
\end{equation}
As shown in our earlier work~\cite{gularxiv}, this problem admits a saddle value under mild assumptions on the uncertainty classes, with associated least favorable distributions $(\hat{G}_0,\hat{G}_1)$ and minimizing parameter $\hat{u}$. The precise saddle-point characterization and existence conditions are given in~\cite{gularxiv}.\\
Given the existence of $(\hat{G}_0,\hat{G}_1,\hat{u})$, the least favorable distributions can be obtained by solving the coupled minimax optimization problem,
\begin{equation}\label{eq47}
    \begin{aligned}[b]
        \text{Maximization:}\quad &
        \begin{aligned}[t]
            &\hat{g}_0=\mathrm{arg}\sup_{G_0\in\mathscr{G}_0}D_{u}(G_0,G_1)
             \quad \text{s.t. $g_0>0$, $\Upsilon(G_0)=\int_{\Omega}g_0\, d\mu=1$}\\
            &\hat{g}_1=\mathrm{arg}\sup_{G_1\in\mathscr{G}_1}D_{u}(G_0,G_1)
             \quad \text{s.t. $g_1>0$, $\Upsilon(G_1)=\int_{\Omega}g_1\,d\mu=1$}
        \end{aligned}
        \\[12pt]
        \text{Minimization:}\quad &\hat u=\mathrm{arg}\min_{u\in  (0,1)}D_{u}(\hat{G}_0,\hat{G}_1).
    \end{aligned}
\end{equation}
\section{Finite-Sample Minimax Robust Tests via Asymptotic Theory}\label{sec3}
In this section, LFDs and the asymptotically minimax robust tests are derived for various uncertainty classes considering the minimax optimization problem given by \eqref{eq47}. The analysis includes the uncertainty classes based on the total variation distance as well as the band model. The derivations are intentionally presented in full generality so that the same analytic strategy can be applied by other researchers to new uncertainty classes. To keep the exposition focused, detailed proofs are placed in the appendix. By analytically solving the coupled Lagrangian optimality conditions, classical results are recovered, new ones are obtained (e.g., asymmetric total variation neighborhoods and one specialization of the band model), and the structural consequences of the theory are revealed — in particular, the clipped, piecewise-defined form and uniqueness of the robust likelihood ratio, as well as the fact that its parameters are independent of the choice of $u$.\\ 

\subsection{Total Variation Neighborhood}
The total variation neighborhood is defined as
\begin{equation*}
{\mathscr{G}}_j=\{G_j:D_{\mathrm{TV}}(G_j,F_j)\leq \epsilon_j\},\quad j\in\{0,1\},
\end{equation*}
where
\begin{equation*}
D_{\mathrm{TV}}(G_j,F_j)=\frac{1}{2}\int_{\Omega}|g_j-f_j| d\mu.
\end{equation*}
The LFDs and the corresponding minimax robust test for the uncertainty classes created by the total variation neighborhood were found earlier by Huber \cite{hube65}. However, the design approach is heuristic, many choices of the parameters and/or functions are unknown and the test is obtained under the assumption that the robustness parameters are equal $\epsilon_0=\epsilon_1$. Since asymptotic minimax robustness is a necessary condition for finite-sample minimax robustness, the minimax robust test resulting from the total variation neighborhood can also be analytically derived following the same design procedure as before. The following theorem substantiate this claim.

\begin{thm}[Least Favorable Distributions and Robust LRF under Total Variation]\label{theorem31}
Under total variation neighborhoods with radii $\epsilon_0$ and $\epsilon_1$, the robust likelihood ratio function is given by
\begin{equation}
\hat l =
\begin{cases}
t_l, & l < t_l,\\[0.3em]
l, & t_l \le l \le t_u,\\[0.3em]
t_u, & l > t_u,
\end{cases}
\end{equation}
where $0<t_l<t_u<\infty$. The least favorable distributions corresponding to the robust LRF take the form
\begin{align}
\hat g_0 =
\begin{cases}
(1-\beta t_l) f_0 + \beta f_1, & l < t_l,\\[0.3em]
f_0, & t_l \le l \le t_u,\\[0.3em]
(1-\sigma t_u) f_0 + \sigma f_1, & l > t_u,
\end{cases}\quad
\hat g_1 =
\begin{cases}
t_l \hat g_0, & l < t_l,\\[0.3em]
f_1, & t_l \le l \le t_u,\\[0.3em]
t_u \hat g_0, & l > t_u.
\end{cases}
\end{align}
where the coefficients
\begin{equation}
\beta(t_l,t_u)
=
\frac{\epsilon_0}
{\displaystyle \int_{l<t_l} (t_l f_0-f_1)\, d\mu},
\qquad
\sigma(t_l,t_u)
=
\frac{\epsilon_0}
{\displaystyle \int_{l>t_u} (f_1-t_u f_0)\, d\mu}
\end{equation}
are explicit functions of the clipping thresholds $t_l$ and $t_u$.
\end{thm}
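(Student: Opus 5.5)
The plan is to solve the coupled problem \eqref{eq47} for the total variation classes through pointwise Lagrangian (KKT) conditions, and then check the resulting pair directly.

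\textbf{Step 1: the Lagrangian and stationarity conditions.} For fixed $u\in(0,1)$, maximize the concave functional $D_u(G_0,G_1)$ jointly over $(g_0,g_1)$. The constraints are:
\begin{itemize}
\item normalization, $\Upsilon(G_j)=1$;
\item the radius constraint $\tfrac12\int|g_j-f_j|\,d\mu\le\epsilon_j$;
\item positivity, $g_j>0$.
\end{itemize}
Introduce multipliers $\lambda_j$ for normalization and $\mu_j\ge0$ for the radius constraints. Differentiate pointwise in $g_0$ and $g_1$; the derivative of $|g_j-f_j|$ contributes a subgradient $\mathrm{sign}(g_j-f_j)\in[-1,1]$. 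Writing $r=g_1/g_0$, the stationarity conditions become
\[
(1-u)\,r^{u}=\lambda_0+\tfrac{\mu_0}{2}s_0,\qquad
u\,r^{u-1}=\lambda_1+\tfrac{\mu_1}{2}s_1,
\]
with $s_j\in\partial|\cdot|(g_j-f_j)$.

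\textbf{Step 2: three regions and the form of the solution.} Where $g_j\neq f_j$ the subgradient is $\pm1$, so $r$ must equal one of finitely many constants. Monotonicity of $r^u$ and $r^{u-1}$ in $r$ then forces the following structure:
\begin{itemize}
\item On a middle region $g_j=f_j$ for both $j$, so $r=l$. The subgradient conditions require $l\in[t_l,t_u]$, where $t_l,t_u$ are the two constant values of $r$ obtained from the equations with signs $s_0=+1,s_1=-1$ and $s_0=-1,s_1=+1$.
\item Where $l<t_l$, mass is moved so that $g_0\ge f_0$, $g_1\le f_1$ and $r\equiv t_l$.
\item Symmetrically, where $l>t_u$ we have $r\equiv t_u$.
\end{itemize}
Any mixed configuration (only one $g_j$ perturbed) contradicts the monotonicity of $r$. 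This gives the clipped form of $\hat l$. Since $t_l,t_u$ depend on $(\lambda_j,\mu_j)$ but are ultimately fixed by the constraints, they are independent of $u$.

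\textbf{Step 3: the least favorable densities.} On $\{l<t_l\}$, take $\hat g_0=f_0+\beta(f_1-t_lf_0)$ and $\hat g_1=t_l\hat g_0$. Requiring $\hat g_1\le f_1$ with linear interpolation yields the stated affine form $(1-\beta t_l)f_0+\beta f_1$. The analogous construction with $\sigma$ handles $\{l>t_u\}$. The parameters are fixed as follows:
\begin{itemize}
\item The radius constraint for $G_0$ is active, so $\int_{l<t_l}(\hat g_0-f_0)\,d\mu=\epsilon_0$, i.e.\ $\beta\int_{l<t_l}(t_lf_0-f_1)\,d\mu=\epsilon_0$ up to sign. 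This gives $\beta$.
\item Normalization forces the mass added on one side to be removed on the other, which gives $\sigma$ with the same $\epsilon_0$.
\item The remaining equations (the $\epsilon_1$ constraint for $\hat g_1$, and $\int\hat g_1=1$) give two nonlinear equations determining $(t_l,t_u)$.
\end{itemize}
Concavity of $D_u$ and convexity of the classes make these KKT points global maximizers. Lemma~\ref{lem:unique_maximizer} gives uniqueness when the classes are disjoint.

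\textbf{Step 4: finite-sample validity.} Finally, verify directly that the pair satisfies \eqref{eq:SMR-ineq}, so that by Theorem~\ref{prop1} and Theorem~\ref{thm:AMR_implies_FMR_conditional} it is the FMR solution. For $t$ inside $[t_l,t_u]$, any $G_0$ in the TV ball can shift at most $\epsilon_0$ mass, and $\hat G_0$ already shifts exactly $\epsilon_0$ in the worst direction.

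\textbf{Main obstacle.} I expect the hardest part to be the careful sign and feasibility bookkeeping in Step 3. This means showing $0\le\beta t_l\le1$ and $0\le\sigma t_u\le1$, showing existence and ordering $0<t_l<t_u$ of solutions to the threshold equations when $\epsilon_0\neq\epsilon_1$, and ruling out the degenerate case where the classes overlap. I would first handle these by monotonicity of $t\mapsto\int_{l<t}(tf_0-f_1)\,d\mu$ and an intermediate-value argument.
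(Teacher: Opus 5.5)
Your route is the paper's own. Stationarity gives the three-region clipped ratio (Lemma~\ref{lemma1}), the LFDs on the clipping regions are affine in $f_0,f_1$ with coefficients fixed by the $G_0$ constraints (Lemma~\ref{lemma2}), and the $G_1$ constraints give the threshold system (Proposition~\ref{lem:clip_equation}). Treating $\hat g_0=f_0+\beta(f_1-t_lf_0)$ as an ansatz and certifying it by KKT sufficiency and a direct check of \eqref{eq:SMR-ineq} is sounder than the paper's span-plus-continuity argument, because stationarity only fixes the ratio $r$ on the clipping regions.

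\textbf{The direction of the perturbation is reversed in Steps~2--3.} On $\{l<t_l\}$ you need $r=t_l>l=f_1/f_0$. That is impossible if $g_0\ge f_0$ and $g_1\le f_1$, since these force $r\le l$. In your own stationarity equations, $s_0=-1$ and $s_1=+1$ each select the smaller value of $r$. So $t_l$ arises from $g_0<f_0$, $g_1>f_1$, and $t_u$ from the opposite signs: $\hat G_0$ moves $\epsilon_0$ of mass from $\{l<t_l\}$ to $\{l>t_u\}$, and $\hat G_1$ moves $\epsilon_1$ the other way. With this, $\int_{l<t_l}(f_0-\hat g_0)\,d\mu=\beta\int_{l<t_l}(t_lf_0-f_1)\,d\mu=\epsilon_0$ gives $\beta>0$ with no sign ambiguity. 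Also $\hat g_1-f_1=(1-\beta t_l)(t_lf_0-f_1)$ on $\{l<t_l\}$, so the requirement is $\hat g_1\ge f_1$, not $\hat g_1\le f_1$. Your Step~4 check needs exactly these corrected directions.

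The correction also settles most of your ``main obstacle''. The threshold equations give $1-\beta t_l=\epsilon_1/\int_{l<t_l}(t_lf_0-f_1)\,d\mu$ and $1-\sigma t_u=\epsilon_1/\int_{l>t_u}(f_1-t_uf_0)\,d\mu$, so feasibility is automatic. For the ordering, set $\phi(t)=\int(tf_0-f_1)^+d\mu-\epsilon_0t$, which is convex with $\phi(0)=0$. The second equation reads $\phi(t)-t+1=\epsilon_1$ with a decreasing left side. Hence $t_l<t_u$ iff $t_l<1$ iff $D_{\mathrm{TV}}(F_0,F_1)>\epsilon_0+\epsilon_1$.

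\textbf{Drop the uniqueness claims.} Mixed configurations are admissible. At a point of $\{l<t_l\}$ with $g_0<f_0$ and $g_1=f_1$, stationarity gives $r=t_l\in[t_l,t_u]$, which is consistent. More generally, $g_1^ug_0^{1-u}$ is positively homogeneous of degree one, hence linear along $\{g_1=t_lg_0\}$. Take any $\hat g_0=f_0-a$, $\hat g_1=t_l\hat g_0$ on $\{l<t_l\}$ with $0\le a\le(t_lf_0-f_1)/t_l$ and $\int_{l<t_l}a\,d\mu=\epsilon_0$, and likewise above $t_u$. Every such pair is feasible, satisfies the same KKT conditions, and attains the same $D_u$ for every $u$.

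So the uniqueness you take from Lemma~\ref{lem:unique_maximizer} fails for these classes; the strict concavity it relies on fails along rays. The theorem is therefore an existence statement about one member of this family, which your construct-and-verify argument proves. The FMR conclusion should rest on your direct SMR check plus Theorem~\ref{prop1}, not on Theorem~\ref{thm:AMR_implies_FMR_conditional}.
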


\begin{proof}
Lemma~\ref{lemma1} establishes that, under total variation neighborhoods, the robust likelihood ratio function admits the clipped form with thresholds $0<t_l<t_u<\infty$. Given this robust likelihood ratio, Lemma~\ref{lemma2} shows that the corresponding least favorable distributions must coincide with the nominal densities on the central region $\{t_l\le l\le t_u\}$ and are linear combinations of $f_0$ and $f_1$ on the lower and upper clipping regions, respectively. Imposing continuity at the region boundaries yields the explicit parametric expressions stated in the theorem. The remaining dependence of the coefficients $\beta$ and $\sigma$ on the clipping thresholds $t_l$ and $t_u$ is determined by the normalization and total variation constraints, and is made explicit in Proposition~\ref{lem:clip_equation}. 
\end{proof}

\begin{lem}[Parametric form of the robust LRF]\label{lemma1}
For the total variation neighborhod, the robust likelihood ratio function admits a piecewise parametric representation of the form
\begin{equation}
\frac{\hat g_1}{\hat g_0}=
\begin{cases}
t_1, & \frac{f_1}{f_0}<t_l,\\[0.3em]
\frac{f_1}{f_0}, & t_l\le \frac{f_1}{f_0}\le t_u,\\[0.3em]
t_u, & \frac{f_1}{f_0}>t_u,
\end{cases}
\end{equation}
where $t_l<t_u$ are determined by the Lagrange multipliers. Moreover, the parameter $u \in (0,1)$ is uniquely determined by the Lagrange multipliers as
\begin{equation}\label{eq_u}
u=\frac{\log\!\left(\frac{\lambda_0+2\mu_0}{-\lambda_0+2\mu_0}\right)}{\log\!\left(\frac{\lambda_0+2\mu_0}{-\lambda_0+2\mu_0}\right)+\log\!\left(\frac{-\lambda_1+2\mu_1}{\lambda_1+2\mu_1}\right)}.
\end{equation}
\end{lem}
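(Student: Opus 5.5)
The plan is to derive the clipped form directly from the pointwise first-order (KKT) conditions of the coupled maximization in \eqref{eq47}, specialized to the total variation constraint. For fixed $u\in(0,1)$, I would form for each hypothesis a Lagrangian that adjoins the total variation constraint with a multiplier $\lambda_j$ and the normalization constraint $\Upsilon(G_j)=1$ with a multiplier $\mu_j$. Schematically,
\begin{equation*}
\mathcal L_0(g_0)=\int_\Omega g_1^u g_0^{1-u}\,d\mu-\lambda_0\Big(\tfrac12\int_\Omega |g_0-f_0|\,d\mu-\epsilon_0\Big)-\mu_0\Big(\int_\Omega g_0\,d\mu-1\Big),
\end{equation*}
and analogously $\mathcal L_1(g_1)$. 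The signs of the multipliers may need adjusting to match \eqref{eq_u}. Since $D_u$ is concave in each argument and the constraint sets are convex, the KKT conditions are necessary and sufficient. Because $|\cdot|$ is not differentiable at the nominal density, I would state stationarity in subgradient form: pointwise $\mu$-a.e. there is $s_j(y)\in\partial|g_j-f_j|(y)$, so $s_j=\pm1$ off the set $\{g_j=f_j\}$ and $s_j\in[-1,1]$ on it, such that
\begin{equation*}
2(1-u)\,\hat l^{\,u}=\lambda_0 s_0+2\mu_0,\qquad 2u\,\hat l^{\,u-1}=\lambda_1 s_1+2\mu_1,\qquad \hat l=g_1/g_0 .
\end{equation*}

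The key structural consequence is that on any region where $g_0\neq f_0$ (so $s_0=\pm1$), the ratio $\hat l$ is forced to be one of two constants. The same holds for $g_1\neq f_1$.

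\textbf{Step 1: sign patterns.} I would first classify the admissible sign patterns $(s_0,s_1)$. Total variation mass must be moved from $f_0$ toward $f_1$ under $\mathcal H_0$ and vice versa under $\mathcal H_1$, since this is what increases the affinity. Consequently only two patterns occur off the nominal set:
\begin{itemize}
\item $g_0>f_0$ with $g_1<f_1$, where $\hat l$ equals a constant $t_u$;
\item $g_0<f_0$ with $g_1>f_1$, where $\hat l$ equals a constant $t_l$.
\end{itemize}
On the remaining set, $g_0=f_0$ and $g_1=f_1$, so $\hat l=l$. Mixed patterns such as $g_0\neq f_0$ with $g_1=f_1$ would force $l$ to equal a constant on a set of positive measure, which can be absorbed into the clipping regions.

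\textbf{Step 2: ordering of the regions.} I would then show that these regions are ordered by $l=f_1/f_0$, namely $\{l<t_l\}$, $\{t_l\le l\le t_u\}$ and $\{l>t_u\}$. On the middle set the subgradient condition with $s_j\in[-1,1]$ and the monotonicity of $x\mapsto x^u$ and $x\mapsto x^{u-1}$ confine $l$ to the interval $[t_l,t_u]$. A swapping argument then places the rest: moving mass from a point with large $l$ to one with small $l$ would strictly increase $D_u$, so points with $l$ above $t_u$ must lie in the upper clipping region and points with $l$ below $t_l$ in the lower one. This yields the clipped form, with $t_l<t_u$ whenever $\lambda_0,\lambda_1\neq0$, that is, whenever the total variation constraints are active.

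\textbf{Step 3: formula for $u$.} Formula \eqref{eq_u} follows by dividing the stationarity equations evaluated at the two clipping levels. This gives
\begin{equation*}
u\log\frac{t_u}{t_l}=\log\frac{\lambda_0+2\mu_0}{-\lambda_0+2\mu_0},\qquad (u-1)\log\frac{t_u}{t_l}=\log\frac{\lambda_1+2\mu_1}{-\lambda_1+2\mu_1},
\end{equation*}
with the sign conventions fixed so that the ordering $t_l<t_u$ holds. Eliminating $\log(t_u/t_l)$ gives $u=A/(A-B)$, which is exactly \eqref{eq_u}, and uniqueness of $u$ is immediate.

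\textbf{Main obstacle.} I expect the hardest step to be the rigorous handling of the nonsmooth subgradient conditions and the exclusion of mixed sign patterns or non-monotone region assignments. For this I would first try the exchange (rearrangement) argument on $D_u$ described in Step 2, rather than relying on the KKT equations alone. A secondary technical point is the possibility that $g_j$ vanishes somewhere, where $g_0^{-u}$ is singular. I would treat this using the positivity constraint in \eqref{eq47} together with the remark following Theorem~\ref{thm:FMR_implies_AMR}, which allows pathological elements to be discarded from the uncertainty classes.
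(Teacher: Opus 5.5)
\textbf{Steps 1--2.} These follow the paper's route: pointwise stationarity of the two Lagrangians, then a case analysis on the signs of $g_j-f_j$. Your subgradient treatment of $\{g_0=f_0,\,g_1=f_1\}$ genuinely sharpens it. It gives $l\in[t_l,t_u]$ on the nominal set, which the paper asserts but never derives. Combined with $l<t_l$ on the lower set and $l>t_u$ on the upper set, it pins the three regions down exactly, so the exchange argument is not needed. Two small corrections. First, equal-sign patterns such as $g_0>f_0$, $g_1>f_1$ are excluded because they force $t_l=t_u$, i.e.\ $\lambda_0=0$. Second, in the mixed patterns it is $\hat l$, not $l$, that is constant.

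\textbf{Step 3 is wrong.} Use your own stationarity conditions with the assignments from Step 1, namely $s_1=+1$ at level $t_l$ and $s_1=-1$ at level $t_u$:
\begin{equation*}
2u\,t_l^{\,u-1}=\lambda_1+2\mu_1,\qquad 2u\,t_u^{\,u-1}=-\lambda_1+2\mu_1
\quad\Longrightarrow\quad
(u-1)\log\frac{t_u}{t_l}=\log\frac{-\lambda_1+2\mu_1}{\lambda_1+2\mu_1}.
\end{equation*}
The right-hand side is the negative of the one in your second equation.

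Your version cannot hold. Positivity of $2u\,t_u^{u-1}$ forces $2\mu_1>\lambda_1$, and $t_l<t_u$ forces $\lambda_1>0$. So your right-hand side is positive while the left-hand side is negative.

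No sign convention rescues it. Any convention applied uniformly to $\mathcal L_0$ and $\mathcal L_1$ either negates $A$ and $B$ together or leaves both unchanged, so the result of the elimination does not move.

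\textbf{What the elimination actually gives.} Done correctly, it yields $\log(t_u/t_l)=A+B$ and $u=A/(A+B)$ with your $A,B$. This is \eqref{eq_u} with the argument of the second logarithm inverted. The formula as displayed is wrong: for multipliers satisfying the stationarity conditions at a given $u$, its right-hand side equals $u/(2u-1)$, which never lies in $(0,1)$. For example, $u=\tfrac12$, $t_l=1$, $t_u=4$ gives $\lambda_0=\tfrac12$, $2\mu_0=\tfrac32$, $\lambda_1=\tfrac14$, $2\mu_1=\tfrac34$, and the denominator of \eqref{eq_u} vanishes. The displayed formula holds only if $\lambda_1$ enters $\mathcal L_1$ with the opposite sign convention from $\lambda_0$ in $\mathcal L_0$.

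The paper's proof does the same elimination but hides it behind ``straightforward algebra,'' so it does not resolve this either. Choosing a sign so as to land on the stated target is not a proof. You should carry out the division honestly and flag the discrepancy.
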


\begin{proof}
The proof proceeds by analyzing the pointwise optimality conditions of the Lagrangian formulation associated with the total variation constraints. For fixed $u\in(0,1)$, consider the Lagrangians
\begin{align}
L_0(g_0,g_1;\lambda_0,\mu_0)&=D_u(G_0,G_1)+\lambda_0 (D_{\mathrm{TV}}(G_0,F_0)-\epsilon_0)+\mu_0(\Upsilon(G_0)-1)),\nonumber\\
L_1(g_0,g_1;\lambda_1,\mu_1)&=D_u(G_0,G_1)+\lambda_1 (D_{\mathrm{TV}}(G_1,F_1)-\epsilon_1)+\mu_1(\Upsilon(G_1)-1)),
\end{align}
with Lagrange multipliers $\lambda_j\ge0$ and $\mu_j\in\mathbb{R}$. Taking pointwise first variations with respect to $g_0$ and $g_1$ yields the
first-order stationarity conditions for $\mu$-almost every $y\in\Omega$,
\begin{align}
(1-u)\!\left(\frac{g_1}{g_0}\right)^u
+\mu_0
+\frac{\lambda_0}{2}\operatorname{sgn}(g_0-f_0)
&=0,
\label{eq:stat_TV_0}\\
u\!\left(\frac{g_1}{g_0}\right)^{u-1}
+\mu_1
+\frac{\lambda_1}{2}\operatorname{sgn}(g_1-f_1)
&=0.
\label{eq:stat_TV_1}
\end{align}
Equations \eqref{eq:stat_TV_0}–\eqref{eq:stat_TV_1} show that the ratio $g_1/g_0$ can take only finitely many constant values, depending on the signs of $g_0-f_0$ and $g_1-f_1$. There are nine possible combinations:
\begin{equation}
g_0 \lessgtr f_0,\qquad g_1 \lessgtr f_1,\qquad g_0=f_0,\ g_1=f_1.
\end{equation}
These cases are examined next.\\
\emph{Case 1: $g_0=f_0$ and $g_1=f_1$.}
Since both constraints are inactive, the optimality conditions imply
\begin{equation}
\frac{g_1}{g_0}=\frac{f_1}{f_0},
\end{equation}
which defines the central region.\\
\emph{Case 2: $g_0<f_0$ and $g_1>f_1$.}
We have $\operatorname{sgn}(g_0-f_0)=-1$ and $\operatorname{sgn}(g_1-f_1)=+1$. Solving \eqref{eq:stat_TV_0}–\eqref{eq:stat_TV_1} gives
\begin{equation}
\frac{g_1}{g_0}=\frac{u(-\lambda_0+2\mu_0)}{(1-u)(\lambda_1+2\mu_1)}=:t_l.
\end{equation}
Moreover, since $g_0<f_0$ and $g_1>f_1$, dividing both inequalities pointwise yields $\frac{g_1}{g_0}>\frac{f_1}{f_0}$\\
\emph{Case 3: $g_0>f_0$ and $g_1<f_1$.}
Here $\operatorname{sgn}(g_0-f_0)=+1$ and $\operatorname{sgn}(g_1-f_1)=-1$, and similarly
\begin{equation}
\frac{g_1}{g_0}=\frac{u(2\mu_0+\lambda_0)}{(1-u)(2\mu_1-\lambda_1)}=:t_u,
\end{equation}
where in this case we also have $\frac{g_1}{g_0}<\frac{f_1}{f_0}$.

\emph{Remaining cases.}
All other sign combinations either reduce to one of the above three cases by dividing the defining inequalities by $g_0$ or $f_0$, or lead to infeasible solutions (e.g., negative constants for $g_1/g_0$), which are ruled out by nonnegativity of densities. Thus, no additional values of $g_1/g_0$ arise. Collecting the admissible cases, the likelihood ratio function $\hat g_1/\hat g_0$ admits the piecewise representation as given in Lemma~\ref{lemma1}.\\ 
The four equations—a pair from each case—must be compatible with a single $u \in (0,1)$. Eliminating $t_l$ and $t_u$ between the two pairs yields a consistency condition that determines $u$ uniquely. After straightforward algebra one obtains $u$ as given in \eqref{eq_u}. This completes the proof.
\end{proof}

\begin{lem}\label{lemma2}
Suppose the robust LRF $\hat l$ admits the clipped form characterized in Lemma~\ref{lemma1}, with thresholds $0<t_l<t_u<\infty$ and a central region on which $\hat l=f_1/f_0$. Then there exist densities $(\hat g_0,\hat g_1)$ such that
\begin{itemize}
\item $\hat g_0=f_0$ and $\hat g_1=f_1$ on the region $\{t_l\le f_1/f_0\le t_u\}$,
\item $\hat g_0$ and $\hat g_1$ are linear combinations of $f_0$ and $f_1$ on the
clipping regions $\{f_1/f_0<t_l\}$ and $\{f_1/f_0>t_u\}$,
\end{itemize}
and the resulting pair $(\hat g_0,\hat g_1)$ is given by the expressions stated
in Theorem~\ref{theorem31}.
\end{lem}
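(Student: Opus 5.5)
The plan is to use the sign structure from the proof of Lemma~\ref{lemma1} region by region, and then fix the free coefficients with the normalization and total variation constraints.

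\emph{Central region.} On $\{t_l\le l\le t_u\}$ only Case~1 is compatible with $\hat g_1/\hat g_0=f_1/f_0$. Hence $\hat g_0=f_0$ and $\hat g_1=f_1$ there, with both constraints inactive pointwise.

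\emph{Lower region $\{l<t_l\}$.} Case~2 applies: $\hat g_0\le f_0$, $\hat g_1\ge f_1$ and $\hat g_1=t_l\hat g_0$. A mixture ansatz suggests itself. Write the mass removed from $f_0$ as proportional to $t_l f_0-f_1$, which is positive on this region. This gives the candidate $\hat g_0=f_0-\beta(t_l f_0-f_1)=(1-\beta t_l)f_0+\beta f_1$ for a constant $\beta\ge0$. The ratio constraint then forces $\hat g_1=t_l\hat g_0=f_1+(1-\beta t_l)(t_l f_0-f_1)$, which is indeed $\ge f_1$ when $\beta t_l\le1$.

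\emph{Upper region $\{l>t_u\}$.} Case~3 applies: $\hat g_0\ge f_0$, $\hat g_1\le f_1$ and $\hat g_1=t_u\hat g_0$. Symmetrically, I will take $\hat g_0=f_0+\sigma(f_1-t_uf_0)=(1-\sigma t_u)f_0+\sigma f_1$ with $\sigma\ge0$, since $f_1-t_uf_0>0$ there. Then $\hat g_1=t_u\hat g_0$, and $\hat g_1\le f_1$ is equivalent to $\sigma t_u\le1$. Linearity in $f_0,f_1$ is justified because the stationarity conditions give no other pointwise information, and any admissible choice yields the same LRF. Continuity of $\hat g_0$ at the boundaries $l=t_l$ and $l=t_u$ holds automatically, because the correction terms vanish there.

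\emph{Fixing $\beta$ and $\sigma$.} The total variation of $\hat G_0$ from $F_0$ equals the mass removed on the lower region, which must equal the mass added on the upper region, since normalization requires the net change to be zero. The constraint is active at the LFD, so set $D_{\mathrm{TV}}(\hat G_0,F_0)=\epsilon_0$. This gives
\[
\beta\int_{l<t_l}(t_lf_0-f_1)\,d\mu=\epsilon_0=\sigma\int_{l>t_u}(f_1-t_uf_0)\,d\mu,
\]
which yields the stated formulas.

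\emph{Consistency conditions.} The analogous balance for $\hat g_1$ gives two further equations. One is normalization of $\hat g_1$, namely that the mass added on the lower region equals the mass removed on the upper region. The other is $D_{\mathrm{TV}}(\hat G_1,F_1)=\epsilon_1$. These two equations in $(t_l,t_u)$ are the nonlinear system deferred to Proposition~\ref{lem:clip_equation}.

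\emph{Main obstacle.} The hardest step is justifying that the ansatz is the right one, that is, that feasibility holds ($\beta t_l\le1$, $\sigma t_u\le1$, and thresholds exist with $0<t_l<t_u$). I would argue this by monotonicity. The integral $\int_{l<t}(tf_0-f_1)\,d\mu$ is continuous and increasing in $t$, and the upper-region integral is decreasing in $t$. For small $\epsilon_j$, where the classes do not overlap, this gives solutions by the intermediate value theorem.
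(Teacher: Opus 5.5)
Your proposal is correct and reaches exactly the expressions of Theorem~\ref{theorem31}. It shares the paper's skeleton: nominal densities on the central region, combinations of $f_0,f_1$ on the clipping regions, matched at the boundaries. The justification and the scope differ, however. The paper claims that the stationarity conditions \emph{force} $\hat g_0,\hat g_1$ into the span of $f_0,f_1$ on each constant-ratio region. It writes $\hat g_0=\alpha f_0+\beta f_1$ and $\hat g_0=\gamma f_0+\sigma f_1$ there, and fixes $\alpha=1-\beta t_l$ and $\gamma=1-\sigma t_u$ by continuity at boundary points $y_l,y_u$. The values of $\beta,\sigma$ are not derived in that proof; they are deferred to the constraints and Proposition~\ref{lem:clip_equation}.

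You instead write the corrections as multiples of $t_lf_0-f_1$ and $f_1-t_uf_0$. These vanish on $\{l=t_l\}$ and $\{l=t_u\}$, so your form is the same as the paper's continuity relations, but it needs no boundary point $y_l$ and no continuity of $l$. You then go further. Mass balance together with $D_{\mathrm{TV}}(\hat G_0,F_0)=\epsilon_0$ gives the stated $\beta,\sigma$. Moreover, $\hat g_1-f_1=(1-\beta t_l)(t_lf_0-f_1)$ on the lower region and $\hat g_1-f_1=-(1-\sigma t_u)(f_1-t_uf_0)$ on the upper region, so the $\hat g_1$ balance reproduces exactly the system of Proposition~\ref{lem:clip_equation}.

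Your existence framing is also the more accurate one. The stationarity conditions pin down only the ratio and the masses. For instance, take any $\hat g_0=f_0-h$ on $\{l<t_l\}$ with $0\le h\le (t_lf_0-f_1)/t_l$ and $\int h\,d\mu=\epsilon_0$, and set $\hat g_1=t_l\hat g_0$. This pair satisfies the same sign, ratio, normalization and budget conditions and gives the same value of $D_u$. So the linear form is a selection, which is precisely what the lemma's ``there exist'' asserts. It is not a consequence of stationarity, as the paper's proof suggests.

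One small correction to your last step. The equation for $t_l$ is $\int_{l<t}(tf_0-f_1)\,d\mu-\epsilon_0 t=\epsilon_1$, and its left side is convex but not monotone in $t$. Existence therefore comes from the intermediate value theorem: the left side is $0$ at $t=0$ and grows like $(1-\epsilon_0)t$. Uniqueness then follows from convexity.

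Your ``main obstacle'' also largely disappears. Once the system holds, $\int_{l<t_l}(t_lf_0-f_1)\,d\mu=\epsilon_1+\epsilon_0t_l$. Hence $\beta t_l=\epsilon_0t_l/(\epsilon_1+\epsilon_0t_l)<1$, and likewise $\sigma t_u=\epsilon_0t_u/(\epsilon_1+\epsilon_0t_u)<1$, whenever $\epsilon_1>0$. Feasibility is thus automatic, and only $t_l<t_u$ needs a non-overlap assumption, which is already a hypothesis of the lemma.
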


\begin{proof}
Suppose the robust likelihood ratio function is given by Lemma~\ref{lemma1}. On a region where $\hat g_1/\hat g_0$ is a constant the pointwise stationary conditions imply that both $\hat g_0$ and $\hat g_1$ belong to the linear span of the nominal densities $f_0$ and $f_1$. Consequently, on the lower clipping region $\{f_1/f_0<t_l\}$ there exist parameters $\alpha,\beta$ such that
\begin{equation}
\hat g_0 = \alpha f_0 + \beta f_1,
\qquad
\hat g_1 = t_l(\alpha f_0 + \beta f_1),
\label{eq:LFD_lower_general}
\end{equation}
and on the upper clipping region $\{f_1/f_0>t_u\}$ there exist parameters
$\gamma,\sigma$ such that
\begin{equation}
\hat g_0 = \gamma f_0 + \sigma f_1,
\qquad
\hat g_1 = t_u(\gamma f_0 + \sigma f_1).
\label{eq:LFD_upper_general}
\end{equation}
On the middle region $\{t_l\le f_1/f_0\le t_u\}$, the likelihood ratio coincides with the nominal likelihood ratio, and the pointwise stationarity conditions are satisfied by the nominal densities, yielding
\begin{equation}
\hat g_0 = f_0, \qquad \hat g_1 = f_1.
\label{eq:LFD_middle}
\end{equation}
It remains to determine the parameters in these linear representations. The parameters in \eqref{eq:LFD_lower_general} and \eqref{eq:LFD_upper_general} are further restricted by continuity of $\hat g_0$ and $\hat g_1$ at the boundaries of the three regions. Let $y_l$ be such that $f_1(y_l)/f_0(y_l)=t_l$. Continuity at $y_l$ requires
\begin{equation}
\alpha f_0(y_l) + \beta f_1(y_l) = f_0(y_l),
\end{equation}
which implies $\alpha+\beta t_l=1$. Similarly, letting $y_u$ satisfy $f_1(y_u)/f_0(y_u)=t_u$, continuity at $y_u$ yields $\gamma+\sigma t_u=1$. Substituting these relations into \eqref{eq:LFD_lower_general}--\eqref{eq:LFD_upper_general} together with \eqref{eq:LFD_middle} results in the piecewise expressions stated in Theorem~\ref{theorem31}. This completes the proof.
\end{proof}
\noindent Since the least favorable distributions constructed above are independent of the parameter $u$, they maximize $D_u(G_0,G_1)$ simultaneously for all $u\in(0,1)$.

\begin{prop}[Identification of the Clipping Thresholds]\label{lem:clip_equation}
Let $\hat g_0$ and $\hat g_1$ be the least favorable distributions under total variation neighborhoods with radii $\epsilon_0$ and $\epsilon_1$, given in Theorem~\ref{theorem31}.  
Then the clipping thresholds $t_l$ and $t_u$ are uniquely determined by the system
\begin{align}
\int_{l < t_l} \bigl( t_l f_0 - f_1 \bigr)\, d\mu - \epsilon_0 \, t_l &= \epsilon_1, \nonumber\\
\int_{l > t_u} \bigl( f_1 - t_u f_0 \bigr)\, d\mu - \epsilon_0 \, t_u &= \epsilon_1. \label{eq:tu_equation}
\end{align}
\end{prop}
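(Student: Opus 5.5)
The plan is to substitute the parametric least favorable densities of Theorem~\ref{theorem31} into the constraints that define them: normalization of $\hat g_0$ and $\hat g_1$, and the two total variation constraints. Then read off one scalar equation per clipping region and establish uniqueness of each root by a convexity/monotonicity argument. Throughout write $A=\{l<t_l\}$, $B=\{l>t_u\}$, and
\begin{equation*}
I_l(t)=\int_{l<t}(t f_0-f_1)\,d\mu,\qquad I_u(t)=\int_{l>t}(f_1-t f_0)\,d\mu ,
\end{equation*}
both of which are nonnegative.

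\emph{Step 1: constraints for $\hat G_0$.} On $A$ we have $\hat g_0-f_0=-\beta(t_l f_0-f_1)\le 0$. On $B$ we have $\hat g_0-f_0=\sigma(f_1-t_u f_0)\ge 0$. On the central region $\hat g_0=f_0$.
\begin{itemize}
\item Normalization $\Upsilon(\hat G_0)=1$ gives $\beta I_l(t_l)=\sigma I_u(t_u)$.
\item Under the saddle point the total variation constraint is active, since the multiplier $\lambda_0>0$ in Lemma~\ref{lemma1} (otherwise no clipping would occur). This gives $\tfrac12\bigl(\beta I_l+\sigma I_u\bigr)=\epsilon_0$.
\end{itemize}
Together these yield $\beta I_l(t_l)=\sigma I_u(t_u)=\epsilon_0$, which reproduces the expressions for $\beta$ and $\sigma$ in Theorem~\ref{theorem31}. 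I will justify activity of the constraint separately: if it were inactive, a small move of mass toward $f_1$ would strictly increase $D_u$.

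\emph{Step 2: constraints for $\hat G_1$.} On $A$, $\hat g_1=t_l\hat g_0$ gives
\begin{equation*}
\hat g_1-f_1=(1-\beta t_l)(t_l f_0-f_1)\ge 0,
\end{equation*}
so the excess mass on $A$ is $I_l(t_l)-\epsilon_0 t_l$, after using $\beta I_l=\epsilon_0$. Analogously, on $B$,
\begin{equation*}
\hat g_1-f_1=-(1-\sigma t_u)(f_1-t_u f_0),
\end{equation*}
so the deficit on $B$ is $I_u(t_u)-\epsilon_0 t_u$.
\begin{itemize}
\item Normalization of $\hat g_1$ equates the excess and the deficit.
\item The active constraint $D_{\mathrm{TV}}(\hat G_1,F_1)=\epsilon_1$ makes their sum equal to $2\epsilon_1$.
\end{itemize}
This gives exactly the two equations \eqref{eq:tu_equation}.

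\emph{Step 3: uniqueness.} Set $\varphi_l(t)=\int(tf_0-f_1)^+d\mu-\epsilon_0 t$. It is convex in $t$, satisfies $\varphi_l(0)=0$, has right derivative $-\epsilon_0<0$ at $0$, and has asymptotic slope $1-\epsilon_0>0$. Hence $\varphi_l(t)=\epsilon_1>0$ has exactly one positive root. Indeed, on the initial decreasing branch $\varphi_l<0$, and beyond its minimum $\varphi_l$ is strictly increasing.

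Similarly set $\varphi_u(t)=\int(f_1-tf_0)^+d\mu-\epsilon_0 t$. Its derivative is $-F_0[l>t]-\epsilon_0<0$, so it is strictly decreasing from $\varphi_u(0)=1$ to $-\infty$. Thus it has a unique root provided $\epsilon_1<1$.

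The two equations decouple, so each threshold is determined separately.

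\emph{Main obstacle: feasibility.} The hardest part is checking that the unique roots are admissible, namely:
\begin{itemize}
\item $t_l<t_u$;
\item $\beta t_l\le 1$ and $\sigma t_u\le 1$, so that $\hat g_0,\hat g_1\ge 0$ and the sign pattern assumed in Lemma~\ref{lemma1} actually holds.
\end{itemize}
Both inequalities $\beta t_l\le1$ and $\sigma t_u\le1$ are equivalent to $I_l(t_l)-\epsilon_0t_l=\epsilon_1\ge 0$ and its analogue, so they follow from the equations themselves.

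For $t_l<t_u$ I would argue by contradiction. If $t_l\ge t_u$, then adding the two equations and using
\begin{equation*}
\int(tf_0-f_1)^+ - \int(f_1-tf_0)^+ = t-1
\end{equation*}
should force the neighborhoods to overlap, i.e.\ $\mathscr G_0\cap\mathscr G_1\neq\emptyset$. This is the nondegeneracy condition already required in Lemma~\ref{lem:unique_maximizer}. I expect this to require the condition that $\epsilon_0+\epsilon_1$ be smaller than $D_{\mathrm{TV}}(F_0,F_1)$, and I would state it explicitly.
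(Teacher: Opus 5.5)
Your derivation of the system is the paper's route: substitute the parametric forms of Theorem~\ref{theorem31} into the four constraints. The two constraints on $\hat G_0$ fix $\beta I_l(t_l)=\sigma I_u(t_u)=\epsilon_0$. The two constraints on $\hat G_1$ then give the system, as equal excess and deficit of $\hat g_1$ on $\{l<t_l\}$ and $\{l>t_u\}$. What you add is an actual proof of uniqueness, which the paper only asserts. The decoupling into $\varphi_l(t_l)=\epsilon_1$ with $\varphi_l$ convex, and $\varphi_u(t_u)=\epsilon_1$ with $\varphi_u$ strictly decreasing, is the right argument.

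Three small repairs are needed.
(i) The right derivative of $\varphi_l$ at $0$ is $F_0[f_1=0]-\epsilon_0$, not $-\epsilon_0$. This is harmless: a convex function with $\varphi_l(0)=0<\epsilon_1$ and $\varphi_l(t)\to\infty$ meets the level $\epsilon_1$ exactly once, whether or not it has a decreasing branch.
(ii) Normalization of $\hat g_1$ only forces the excess and the deficit to equal a common signed value $c$, and the total variation constraint gives $|c|=\epsilon_1$. The choice $c=+\epsilon_1$ (hence $\beta t_l\le 1$, $\sigma t_u\le 1$) is an input from the sign pattern $g_1>f_1$ on the $t_l$-region in Case~2 of the proof of Lemma~\ref{lemma1}. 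It is not a consequence of equations that already assumed it, so state it that way.
(iii) To justify that the constraints are active, perturb $\hat g_0$ toward $\hat g_1$, not toward $f_1$. Concavity of $D_u$ in $g_0$ together with $D_u(\hat G_1,\hat G_1)=1>D_u(\hat G_0,\hat G_1)$ gives a strict increase; argue symmetrically for $\hat g_1$.

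For the feasibility step, adding the equations is not what closes it; evaluate both functions at the common point $t=1$. There $\varphi_l(1)=\varphi_u(1)=D_{\mathrm{TV}}(F_0,F_1)-\epsilon_0$. Two monotonicity facts then apply: $\varphi_l\le\epsilon_1$ on $[0,t_l]$ and $\varphi_l$ is strictly increasing beyond $t_l$, while $\varphi_u$ is strictly decreasing. Hence $t_l<1<t_u$ if and only if $\epsilon_0+\epsilon_1<D_{\mathrm{TV}}(F_0,F_1)$; otherwise $t_u\le 1\le t_l$. This is exactly the non-overlap condition you anticipated, and the statement (like Theorem~\ref{theorem31}'s claim $t_l<t_u$) silently needs it.
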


\begin{proof}
The least favorable distributions $\hat g_0$ and $\hat g_1$ are subject to the four defining constraints
\begin{align}
D_{\mathrm{TV}}(G_0,F_0)=\epsilon_0,\quad D_{\mathrm{TV}}(G_1,F_1)=\epsilon_1,\quad \Upsilon(G_0)=1, \quad \Upsilon(G_1)=1.
\end{align}
Substituting the parametric forms of $\hat g_0$ and $\hat g_1$ given in Theorem~\ref{theorem31} into these constraints yields four scalar equations involving the unknown parameters $\beta$, $\sigma$, $t_l$, and $t_u$. The unit-mass constraint for $\hat g_0$ together with the total variation constraint for $\hat g_0$ uniquely determine the coefficients $\beta$ and $\sigma$ as functions of the clipping thresholds $t_l$ and $t_u$. In particular, these coefficients can be eliminated from the remaining constraints by expressing them in terms of $t_l$ and $t_u$. Inserting these expressions into the unit-mass constraint for $\hat g_1$ and the total variation constraint for $\hat g_1$, and simplifying using the piecewise structure induced by the clipping regions $\{l < t_l\}$ and $\{l > t_u\}$, the resulting conditions reduce to the pair of equations given in \ref{eq:tu_equation}.
\end{proof}

\begin{rem}[Symmetric total variation neighborhoods]
When $\epsilon_0=\epsilon_1$, the minimax problem is invariant under interchange of the hypotheses $f_0$ and $f_1$. Since the least favorable likelihood ratio is unique, its clipped form must be invariant under the transformation $l \mapsto 1/l$. This invariance maps the lower clipping region $\{l<t_l\}$ onto the upper clipping region $\{l>t_u\}$, which is possible if and only if $t_l t_u = 1$. Consequently, the symmetric total variation case reduces to a one-parameter family of least favorable distributions. Under this symmetry constraint, the coefficients in the least favorable distributions satisfy $1-\beta t_l = \beta$ and $1-\sigma t_u = \sigma$. Substituting these expressions into the general parametric form recovers Huber’s classical results \cite{hube65}. The present formulation makes explicit that, in the symmetric case, the apparent two-parameter representation of Huber reduces intrinsically to a single parameter.
\end{rem}

\begin{rem}[Tilting structure of the least favorable distributions]
The least favorable distributions admit a simple tilting interpretation. In the clipping regions, they can be written as linear perturbations of the nominal distributions. In particular,
\begin{equation}
\hat g_0
=
f_0
+\epsilon_0
\begin{cases}
\dfrac{f_1 - t_l f_0}{\int_{l<t_l} (t_l f_0-f_1)\, d\mu}, & l < t_l,\\[0.8em]
0, & t_l \le l \le t_u,\\[0.8em]
\dfrac{f_1 - t_u f_0}{\int_{l>t_u} (f_1-t_u f_0)\, d\mu}, & l > t_u,
\end{cases}
\end{equation}
with an analogous expression for $\hat g_1$ through the relation $\hat g_1 = \hat l\,\hat g_0$. Since $f_1 - t_l f_0 < 0$ on $\{l<t_l\}$ and $f_1 - t_u f_0 > 0$ on $\{l>t_u\}$, the perturbation reduces probability mass in the lower likelihood-ratio region and reallocates the same amount of mass to the upper likelihood-ratio region, while leaving the central region $t_l \le l \le t_u$ unchanged. This tilting structure makes explicit how total variation robustness redistributes probability mass in an adversarial yet controlled manner, concentrating it on observations that are most favorable to the competing hypothesis.
\end{rem}

\subsection{Band Model}\label{sec6_band}
So far, the nominal distributions have been assumed to be known or to be reasonably well approximated prior to the construction of the uncertainty classes. However, in many settings such precise knowledge is unavailable, and the true distributions are only known to lie within prescribed pointwise bounds. This motivates the use of band-type uncertainty models \cite{kassamband}, which capture distributional uncertainty through lower and upper bounding functions on the densities.\\
The band model is given by the uncertainty classes
\begin{equation}\label{eq84}
\mathscr{G}_j=\left\{G_j\in\mathscr{M}: g_{j}^L \leq g_j  \leq g_{j}^U \right\}
\end{equation}
where $\mathscr{M}$ is the set of all distribution functions on $\Omega$, and $g_{j}^L$ and $g_{j}^U$ are non-negative lower and upper bounding functions such that $\mathscr{G}_0$ and $\mathscr{G}_1$ are nonempty sets. This implies
\begin{equation*}
\int_{\Omega} g_{j}^L d\mu \leq 1\leq \int_{\Omega} g_{j}^U d\mu,\quad j\in\{0,1\}.
\end{equation*}
Moreover, $g_{j}^L$ and $g_{j}^U$ should be chosen such that $g_0$ and $g_1$ are distinct density functions, if not $\mathscr{G}_0\cap\mathscr{G}_1\neq \emptyset$ and minimax hypothesis testing is not possible.\\
Band models differ fundamentally from distance-based uncertainty classes. While total variation and $f$-divergence neighborhoods constrain distributions only in an integral sense and therefore permit highly concentrated local deviations, band models enforce pointwise bounds on the densities and exclude such behavior by construction. Consequently, band models are not equivalent to, nor recoverable from, distance-based uncertainty classes except in degenerate cases.\\
From a theoretical standpoint, band models constitute capacity-type uncertainty classes. However, it has long been unclear whether they satisfy the structural properties required for the direct application of Huber’s minimax robustness theory, such as alternation \cite{vastola}. As a result, general minimax robustness guarantees could not be established for band models within Huber’s framework \cite{hube73}.\\
From a practical perspective, band models naturally arise in applications where density estimates are accompanied by pointwise confidence bounds, making lower and upper bounding functions an appropriate representation of uncertainty \cite{kassamband}.\\
Following the same  approach as in the previous sections, the asymptotically minimax robust test and least favorable distributions for the band model can be derived as follows. Consider the Lagrangians:
\begin{align*}
L_0(g_0,g_1,\lambda_0,\theta_0,\mu_0)=D_{u}(G_0,G_1)+\lambda_0(g_0-g_0^L)+\nu_0(g_0^U-g_0)+\mu_0(\Upsilon(G_0)-1),\nonumber\\
L_1(g_0,g_1,\lambda_1,\theta_1,\mu_1)=D_{u}(G_0,G_1)+\lambda_1(g_1-g_1^L)+\nu_1(g_1^U-g_1)+\mu_1(\Upsilon(G_1)-1),
\end{align*}
where $\mu_j$ are scalar, and $\lambda_j$ and $\nu_j$ are functional Langrangian multipliers. Taking the Gateaux derivatives of the Lagrangians, at the direction of unit area integrable functions $\psi_0$ and $\psi_1$, respectively, leads to the first-order stationarity conditions
\begin{align}\label{eq87}
\frac{\partial L_0}{\partial g_0}=\int\left((1-u)\left(\frac{g_1}{g_0}\right)^u+\lambda_0-\nu_0+\mu_0\right)\psi_0 d\mu=0,\nonumber\\
\frac{\partial L_1}{\partial g_1}=\int\left(u\left(\frac{g_1}{g_0}\right)^{u-1}+\lambda_1-\nu_1+\mu_1\right)\psi_1 d\mu=0.
\end{align}
The conditions in \eqref{eq87} are accompanied by complementary slackness for the band constraints, which for $j\in\{0,1\}$ read
\begin{align}
\lambda_j(y)\,\big(g_j(y)-g_j^L(y)\big) &= 0, \nonumber\\ 
\nu_j(y)\,\big(g_j^U(y)-g_j(y)\big) &= 0,
\end{align}
with $\lambda_j(y)\ge 0$ and $\nu_j(y)\ge 0$ almost everywhere. Consequently, on regions where a bound is inactive, the corresponding Lagrange multiplier vanishes almost everywhere, and the stationarity conditions reduce to pointwise equations involving only the remaining multipliers. Depending on which bounds are active, three distinct cases arise, which are analyzed separately below.
\subsubsection*{\text{\bf{Case 1}.} $g_0^U=\infty$ and $g_1^U=\infty$ \text{(no upper bounding functions)}}
\noindent In this case, letting $g_{j}^L=(1-\epsilon_j)f_j$, the band model can equivalently be written as the lower $\epsilon$-contamination model
\begin{equation*}
\mathscr{G}_j^{\epsilon^{-}}=\left\{G_j: G_j=(1-\epsilon_j)F_j+\epsilon_j H, H\in\mathscr{M} \right\}
\end{equation*}
where $f_j$ are the nominal densities and $0\leq\epsilon_j<1$ \cite{kassamband}. Since we have $\nu_0=0$ and $\nu_1=0$ everywhere, and hence, no constraints regarding the upper bounding functions are in effect, there are four conditions regarding the Lagrangians
\begin{align*}
L_0:&\quad g_0=g_0^L\quad \mbox{on}\quad A_0\quad \mbox{and}\quad g_0>g_0^L\quad \mbox{on}\quad \Omega\backslash A_0,\nonumber\\
L_1:&\quad g_1=g_1^L\quad \mbox{on}\quad A_1\quad \mbox{and}\quad g_1>g_1^L\quad \mbox{on}\quad \Omega\backslash A_1.
\end{align*}
The integrals in \eqref{eq87} are defined on the regions where $g_0>g_0^L$ and $g_1>g_1^L$, respectively. On these regions, the lower band constraints are inactive and, by complementary slackness, the corresponding multipliers satisfy $\lambda_0=\lambda_1=0$ almost everywhere. Since $\nu_0=0$ and $\nu_1=0$ everywhere in this case, the stationarity conditions reduce to 
\begin{align}\label{eq89}
\frac{g_1}{g_0}=\frac{1}{k_2}\quad \mbox{on}\quad \bar{A}_0=\Omega\backslash A_0=\{y:g_0>g_0^L\},\nonumber\\
\frac{g_1}{g_0}=k_1\quad \mbox{on}\quad \bar{A}_1=\Omega\backslash A_1=\{y:g_1>g_1^L\},
\end{align}
where $k_1$ and $k_2$ are positive constants determined by the normalization constraints.
\begin{thm}\label{theorem1}
From \eqref{eq89}, it follows that the LFDs and the corresponding likelihood ratio function are unique and given by
\begin{equation}\label{eq90}
\hat{g}_0=\begin{cases}
g_0^L, &  y\in A_0 \\
k_2g_1^L, &  y\in \bar{A}_0
\end{cases},\quad
\hat{g}_1=\begin{cases}
g_1^L, &  y\in A_1 \\
k_1g_0^L, &  y\in \bar{A}_1
\end{cases},
\end{equation}
and
\begin{equation*}
\frac{\hat{g}_1}{\hat{g}_0}=\begin{cases}
\frac{1}{k_2}, &  y\in \bar{A}_0\cap A_1 \\
\frac{g_1^L}{g_0^L}, &  y\in A_0\cap A_1\\
k_1, &  y\in A_0\cap \bar{A}_1
\end{cases}.
\end{equation*}
\end{thm}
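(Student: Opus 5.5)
The plan is to read the structure of $(\hat g_0,\hat g_1)$ directly off the reduced stationarity conditions \eqref{eq89}, combined with the complementary slackness relations. Uniqueness then follows from the normalization constraints. The key observation is that on each region where a density is strictly above its lower bound, the likelihood ratio is pinned to a constant. Since $g_0^U=g_1^U=\infty$, the only way a density can be ``free'' is on $\bar A_0$ or $\bar A_1$; on the complements $A_0$, $A_1$ it equals its lower bound.

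\textbf{Step 1: $\bar A_0$ and $\bar A_1$ are disjoint.} By \eqref{eq89}, on $\bar A_0\cap\bar A_1$ we would have $g_1/g_0 = 1/k_2 = k_1$, i.e.\ $k_1k_2=1$. In that situation neither density touches its lower bound and the two constants coincide. I expect to rule this out by combining two facts. First, on $\bar A_0$ the ratio is $1/k_2$, on $\bar A_1$ it is $k_1$, and on $A_0\cap A_1$ it is $g_1^L/g_0^L$, which lies between these values. Second, with $k_1k_2=1$ one could shift mass on $\bar A_0\cap\bar A_1$ to bring both densities toward each other, and this would contradict $\mathscr G_0\cap\mathscr G_1=\emptyset$ and the strict maximality of $D_u$ (Lemma~\ref{lem:unique_maximizer}). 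Hence $k_1<1/k_2$ and $\bar A_0\subset A_1$, $\bar A_1\subset A_0$. This step, namely excluding the overlap and establishing the ordering $k_1<1/k_2$, is the main obstacle. I would first try to prove it via the strict Hölder inequality $D_u<1$, which holds for distinct densities.

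\textbf{Step 2: explicit form of the LFDs.} On $\bar A_0\subset A_1$ we have $g_1=g_1^L$, so \eqref{eq89} gives $\hat g_0=k_2 g_1^L$. Symmetrically, on $\bar A_1\subset A_0$ we get $\hat g_1=k_1 g_0^L$. On the remaining sets both densities sit at their lower bounds. This yields \eqref{eq90} and the three-piece likelihood ratio. Consistency with $\hat g_0>g_0^L$ on $\bar A_0$ forces $\bar A_0=\{g_1^L/g_0^L>1/k_2\}$. Likewise $\bar A_1=\{g_1^L/g_0^L<k_1\}$. So the ratio is the nominal ratio $g_1^L/g_0^L=f_1/f_0$ clipped at $k_1$ and $1/k_2$, which is Huber's clipped form.

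\textbf{Step 3: uniqueness.} Uniqueness reduces to the normalizations
\[
\int_{g_1^L/g_0^L\le 1/k_2} g_0^L\,d\mu + k_2\int_{g_1^L/g_0^L>1/k_2} g_1^L\,d\mu=1,
\qquad
\int_{g_1^L/g_0^L\ge k_1} g_1^L\,d\mu + k_1\int_{g_1^L/g_0^L<k_1} g_0^L\,d\mu=1.
\]
The left-hand sides are continuous and nondecreasing in $k_2$ and $k_1$, respectively. They are strictly increasing wherever the clipping regions have positive mass, and their range spans $\int g_j^L\,d\mu\le1$ to $\infty$. Each equation therefore has a unique solution, which fixes $A_0$, $A_1$ and hence $(\hat g_0,\hat g_1)$ $\mu$-a.e. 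As a consistency check, the same conclusion also follows from Lemma~\ref{lem:unique_maximizer}, since the lower $\epsilon$-contamination classes are convex and disjoint.
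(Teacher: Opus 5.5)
Your derivation of \eqref{eq90} from Step~1 is fine. So is your monotonicity argument in Step~3, via $k_2\mapsto\int\max(g_0^L,k_2g_1^L)\,d\mu$ and $k_1\mapsto\int\max(g_1^L,k_1g_0^L)\,d\mu$, which even supplies the uniqueness of the constants that the paper defers to Huber. But Step~1 is not proved, and none of the ingredients you list can prove it.

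The claim that $g_1^L/g_0^L$ lies between $k_1$ and $1/k_2$ on $A_0\cap A_1$ is exactly what is at stake. It does not follow from \eqref{eq89}, which constrains $g_1/g_0$ only on the free sets $\bar A_0,\bar A_1$. Shifting mass inside $\bar A_0\cap\bar A_1$ cannot give a contradiction, since the first variation vanishes there (that is what \eqref{eq89} says) and $D_u$ is concave. Neither $D_u<1$ nor Lemma~\ref{lem:unique_maximizer} tells you which sets a maximizer occupies. That lemma is also not a safe black box: $(a,b)\mapsto b^u a^{1-u}$ is positively homogeneous, hence not strictly concave.

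In fact \eqref{eq89}, normalization and disjointness together do not imply the theorem. Take $\Omega=\{1,2,3\}$ with counting measure, $g_0^L=(0.48,0.24,0.08)$ and $g_1^L=(0.08,0.24,0.48)$; the classes are disjoint because $\sum_i\max(g^L_{0,i},g^L_{1,i})=1.2>1$. The pair $g_0=(0.48,0.245,0.275)$, $g_1=(0.08,0.245c,0.275c)$ with $c=0.92/0.52$ has unit masses and exceeds the bounds exactly on $\{2,3\}$. It satisfies \eqref{eq89} with $k_1=1/k_2=c$, yet $\bar A_0\cap\bar A_1=\{2,3\}$. The same missing information is silently used in Step~2, where consistency only gives $\bar A_0\subseteq\{g_1^L/g_0^L>1/k_2\}$ rather than equality. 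It is therefore also used in the sets over which Step~3 integrates.

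The missing idea is dual feasibility on the active sets, i.e.\ the sign condition $\lambda_j\ge0$ that accompanies complementary slackness. Equivalently, it is your mass-shifting intuition applied between a free point and an active point, rather than inside $\bar A_0\cap\bar A_1$. With the paper's Lagrangian and $r=g_1/g_0$, stationarity gives $(1-u)r^u=-\mu_0-\lambda_0\le(1-u)k_2^{-u}$ and $u\,r^{u-1}=-\mu_1-\lambda_1\le u\,k_1^{u-1}$. Hence $r\le1/k_2$ on $A_0$ and $r\ge k_1$ on $A_1$, and together with \eqref{eq89}, $k_1\le r\le1/k_2$ holds $\mu$-a.e.

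If $\bar A_0\cap\bar A_1$ had positive measure, then $k_1=1/k_2$, so $r$ would be a.e.\ constant. Normalization would then force $g_0=g_1\in\mathscr G_0\cap\mathscr G_1$, a contradiction. So $k_1<1/k_2$ and $\bar A_0\cap\bar A_1$ is null. The same two-sided bound yields $A_0=\{g_1^L/g_0^L\le1/k_2\}$ and $A_1=\{g_1^L/g_0^L\ge k_1\}$, after which your Steps~2 and~3 go through as written.

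For comparison, the paper aims at the same intermediate facts: emptiness of $\bar A_0\cap\bar A_1$, then $\hat g_0=k_2g_1^L$ on $\bar A_0$. It gets there through a case analysis of which intersections of $A_0,A_1,\bar A_0,\bar A_1$ are nonempty, ending in the contradiction $\hat g_1/\hat g_0\equiv1$. The example above shows that any such argument must import the sign information at some point, so supplying it explicitly is what your proof needs.
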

\begin{IEEEproof}
The claim follows from the conditions:
\begin{enumerate}
\renewcommand\labelenumi{\bfseries\theenumi.}
\item The sets $A_0$, $A_1$, $\bar{A}_0$ and $\bar{A}_1$ are all non-empty.
\item The set $\bar{A}_0\cap \bar{A}_1$ is empty.
\item On $\bar{A}_0$ and $\bar{A}_1$, respectively, we have $\hat{g}_0=k_2g_1^L$ and $\hat{g}_1=k_1g_0^L$.
\end{enumerate}
\end{IEEEproof}
A detailed proof for each of these conditions is given in Appendix~\ref{appendix_band1}.

\begin{kor}\label{corollary2}
The parameters should satisfy $k_1<1/k_2$, hence,
\begin{equation*}
A_0\cap A_1=\{k_1\leq g_1^L/g_0^L \leq 1/k_2\}.
\end{equation*}
Moreover,
\begin{equation*}
A_0=\{g_1^L/g_0^L<1/k_2\},\quad A_1=\{g_1^L/g_0^L>k_1\}.
\end{equation*}
\end{kor}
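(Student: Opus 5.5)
The corollary follows from the structure in Theorem~\ref{theorem1}: $\hat l=\hat g_1/\hat g_0$ takes the constant $1/k_2$ on $\bar A_0\cap A_1$, equals $g_1^L/g_0^L$ on $A_0\cap A_1$, and equals $k_1$ on $A_0\cap\bar A_1$. I will first identify the three regions through the lower bounds. Then I will use the ordering of the constants to show that the middle region is nonempty and lies between them.

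\emph{Step 1: the set $\bar A_0$.} On $\bar A_0$ we have $\hat g_0=k_2 g_1^L$ and $\hat g_0>g_0^L$. Hence $k_2 g_1^L>g_0^L$, i.e.\ $g_1^L/g_0^L>1/k_2$.

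Conversely, take a point of $A_0$, where $\hat g_0=g_0^L$. By condition~2 of Theorem~\ref{theorem1} such a point lies either in $A_1$ or in $\bar A_1$. If it lies in $A_1$, then $\hat g_1=g_1^L\ge \hat g_1$'s lower bound trivially. Since $\hat g_0=k_2\hat g_1$ fails to be forced there, I instead use the stationarity and complementary-slackness sign. On $A_0$ the multiplier $\lambda_0\ge0$ is active, and \eqref{eq87} gives $(1-u)\hat l^{\,u}+\mu_0=-\lambda_0\le 0$. On $\bar A_0$ the same expression vanishes with $\hat l=1/k_2$. Since $\hat l^{\,u}$ is increasing in $\hat l$, this yields $\hat l\le 1/k_2$ on $A_0$.

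Where $\hat l=g_1^L/g_0^L$, this gives $g_1^L/g_0^L\le 1/k_2$. On the part $A_0\cap\bar A_1$ we have $\hat g_1=k_1 g_0^L>g_1^L$, so $g_1^L/g_0^L<k_1$. Once $k_1<1/k_2$ is established, this is again below $1/k_2$.

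\emph{Step 2: the set $\bar A_1$.} The symmetric argument applies, using the $L_1$ condition with $u\hat l^{\,u-1}$, which is decreasing in $\hat l$. It gives $\hat l\ge k_1$ on $A_1$ and $g_1^L/g_0^L<k_1$ on $\bar A_1$.

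\emph{Step 3: the ordering $k_1<1/k_2$.} Taking any point of $A_0\cap A_1$, which is nonempty by condition~1 together with condition~2, Steps 1 and 2 give $k_1\le \hat l\le 1/k_2$.

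Strictness needs a separate argument. I will argue by contradiction: if $k_1=1/k_2$, then $\hat l$ is constant on all of $\Omega$, so $\hat g_0=\hat g_1$. This contradicts $\mathscr G_0\cap\mathscr G_1=\emptyset$.

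\emph{Step 4: conclusion.} With the ordering in hand, the inclusions from Steps 1 and 2 become equalities up to the boundary convention:
\begin{equation*}
A_0=\{g_1^L/g_0^L\le 1/k_2\},\qquad A_1=\{g_1^L/g_0^L\ge k_1\}.
\end{equation*}
This matches the stated sets up to the null set where equality holds. Intersecting them gives $A_0\cap A_1=\{k_1\le g_1^L/g_0^L\le 1/k_2\}$.

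\emph{Main obstacle.} The delicate point is Step 1: correctly converting the complementary-slackness sign of $\lambda_j$ into the one-sided inequality on $\hat l$ via monotonicity of $t\mapsto t^u$ and $t\mapsto t^{u-1}$. The boundary ties, i.e.\ strict versus non-strict inequalities, must be handled on $\mu$-null sets.
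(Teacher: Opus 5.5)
Your argument is correct, but it takes a different route from the paper's.

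\textbf{The paper's route.} The paper works purely with the region decomposition from the proof of Theorem~\ref{theorem1}. It takes $\bar A_0\cap A_1=\{g_1^L/g_0^L>1/k_2\}$ and $A_0\cap\bar A_1=\{g_1^L/g_0^L<k_1\}$ as given. It then rules out two cases: $k_1=1/k_2$, because the middle region $A_0\cap A_1$ would become empty, contradicting its nonemptiness shown in Appendix~\ref{appendix_band1}; and $k_1>1/k_2$, because the two clipping regions would overlap. Finally it reads off $A_0$ and $A_1$ as unions of these pieces.

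\textbf{Your route.} You extract one-sided bounds from the sign of the multipliers, namely $\hat l\le 1/k_2$ on $A_0$ and $\hat l\ge k_1$ on $A_1$, using monotonicity of $t\mapsto t^u$ and $t\mapsto t^{u-1}$. You get strictness because otherwise $\hat l$ would be constant, forcing $\hat g_0=\hat g_1\in\mathscr G_0\cap\mathscr G_1$.

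\textbf{What each buys.} The paper's argument is shorter and reuses combinatorics already in place. Yours supplies optimality information that \eqref{eq89} alone does not contain. First, no point of $A_0\cap A_1$ can have $g_1^L/g_0^L>1/k_2$ or $g_1^L/g_0^L<k_1$. Second, the ``reversed'' configuration $k_1>1/k_2$ is excluded. The paper's region descriptions use both facts implicitly, so your version makes the characterization of $A_0$ and $A_1$ genuinely rigorous.

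\textbf{Small repairs.}
\begin{enumerate}
\item Conditions~1 and~2 of Theorem~\ref{theorem1} do not by themselves give $A_0\cap A_1\neq\emptyset$; they still allow $A_0=\bar A_1$ and $A_1=\bar A_0$. You do not need this fact. $\bar A_1$ is nonempty and, by condition~2, contained in $A_0$. At its points $\hat l=k_1$, so your Step~1 bound gives $k_1\le 1/k_2$ directly.
\item In the strictness step, state that the common constant must equal $1$ by normalization.
\item Your derivation gives exactly $A_0=\{g_1^L/g_0^L\le 1/k_2\}$ and $A_1=\{g_1^L/g_0^L\ge k_1\}$. Obtaining the strict inequalities in the statement needs the level sets $\{g_1^L/g_0^L=1/k_2\}$ and $\{g_1^L/g_0^L=k_1\}$ to be $\mu$-null. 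That is an extra assumption, not something you proved; the paper's own proof also ends with the non-strict form.
\item Delete the garbled sentence at the start of the converse in Step~1.
\end{enumerate}
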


\begin{IEEEproof}
A proof of Corollary~\ref{corollary2} is given in Appendix~\ref{appendix_cor2}
\end{IEEEproof}

\begin{rem}\label{rem_contamination}
Let $t_u=1/k_2$, $t_l=k_1$ and $l=g_1^L/g_0^L$. Then, the LFDs and the robust LRF can be rewritten as
\begin{equation}\label{eq97}
\hat{g}_0=\begin{cases}
g_0^L, &  l\leq t_u \\
1/t_ug_1^L, &  l> t_u
\end{cases},\quad
\hat{g}_1=\begin{cases}
g_1^L, &  l\geq t_l \\
t_lg_0^L, &  l< t_l
\end{cases},
\end{equation}
and
\begin{equation}\label{eq98}
\frac{\hat{g}_1}{\hat{g}_0}=\begin{cases}
t_u, &  l>t_u \\
l, &  t_l\leq l \leq t_u\\
t_l, &  l<t_l
\end{cases}.
\end{equation}
The lower bounding function constraints are satisfied automatically. Because, on $\{l\leq t_u\}$ and $\{l\geq t_l\}$, $\hat{g}_j\geq g_j^L$ holds with equality, and on $\{l> t_u\}$ and $\{l< t_l\}$, we necessarily have $\hat{g}_0=1/t_ug_1^L\geq g_0^L$ and $\hat{g}_1= t_lg_0^L\geq g_1^L$, respectively, as $l=g_1^L/g_0^L$. The density function constraints are satisfied by solving
\begin{align}\label{eq99}
\int_{l\leq t_u} &g_0^L d\mu+\frac{1}{t_u}\int_{l> t_u} g_1^L \mathrm{d}\mu=1, \nonumber\\
\int_{l\geq t_l} &g_1^L d\mu+t_l\int_{l< t_l} g_0^L d\mu=1.
\end{align}
\end{rem}

\begin{rem}
Let $c'$ and $c''$ denote Huber's clipping thresholds associated with the likelihood ratio $\ell=f_1/f_0$. If the band-model clipping thresholds are parametrized as
\begin{equation}
t_l = c'\frac{1-\epsilon_1}{1-\epsilon_0},
\qquad
t_u = c''\frac{1-\epsilon_1}{1-\epsilon_0},
\end{equation}
then the LFDs in \eqref{eq97} reduce exactly to Huber's $\epsilon$-contamination least favorable distributions \cite{hube65}. In this case it is known by Huber that the equations in \eqref{eq99} have unique solutions and the LFDs in \eqref{eq97} are single-sample minimax robust \cite{hube65}. From Theorem~\ref{thm1}, single-sample minimax robust LFDs minimize all $f$-divergences, hence they also maximize all $u$-affinities.
By \eqref{eq99}, it is also implied that the parameters $t_l$ and $t_u$ are only dependent on $g_0^L$ and $g_1^L$, i.e. they are independent of the choice of $u$. This is in accordance with Theorem~\ref{thm1}. 
\end{rem}

\subsubsection*{\text{\bf{Case 2}.} $g_0^L=0$ and $g_1^L=0$ \text{(no lower bounding functions)}}
\noindent In this case, letting $g_{j}^U=(1+\epsilon_j)f_j$, the band model can equivalently be written as the upper $\epsilon$-contamination model
\begin{equation*}
\mathscr{G}_j^{\epsilon^{+}}=\left\{G_j: G_j=(1+\epsilon_j)F_j-\epsilon_j H, H\in\mathscr{M} \right\}
\end{equation*}
where $f_j$ are the nominal density functions and $\epsilon_j>0$. By the condition of no lower bounding functions, we have $\lambda_0=0$ and $\lambda_1=0$ everywhere. Similarly, the positivity constraints are also not imposed as before because, as it can be seen later, the density functions automatically satisfy these constraints. In this case, there are four conditions regarding the Lagrangians:
\begin{align}\label{eq100}
L_0:&\quad g_0=g_0^U\quad \mbox{on}\quad A_0\quad \mbox{and}\quad g_0<g_0^U\quad \mbox{on}\quad \Omega\backslash A_0,\nonumber\\
L_1:&\quad g_1=g_1^U\quad \mbox{on}\quad A_1\quad \mbox{and}\quad g_1<g_1^U\quad \mbox{on}\quad \Omega\backslash A_1.
\end{align}
The integrals in \eqref{eq87} are defined on the regions where $g_0<g_0^U$ and $g_1<g_1^U$, respectively. On these regions, the upper band constraints are inactive and, by complementary slackness, the corresponding multipliers satisfy $\nu_0=\nu_1=0$ almost everywhere. Since $\lambda_0=0$ and $\lambda_1=0$ everywhere in this case, the stationarity conditions reduce to
\begin{align}\label{eq101}
\frac{g_1}{g_0}=\frac{1}{k_2}\quad \mbox{on}\quad \bar{A}_0=\Omega\backslash A_0=\{y:g_0<g_0^U\},\nonumber\\
\frac{g_1}{g_0}=k_1\quad \mbox{on}\quad \bar{A}_1=\Omega\backslash A_1=\{y:g_1<g_1^U\},
\end{align}
where $k_1$ and $k_2$ are positive constants determined by the normalization constraints.

\begin{thm}\label{theorem2}
Let $t_l=1/k_2$, $t_u=k_1$ and $l=g_1^U/g_0^U$. It follows that the LFDs and the corresponding LRF are unique and given by
\begin{equation}
\hat{g}_0=\begin{cases}
g_0^U, &  l\geq t_l  \\
1/t_lg_1^U, &  l< t_l
\end{cases},\quad
\hat{g}_1=\begin{cases}
g_1^U, &  l\leq t_u \\
t_ug_0^U, &  l> t_u
\end{cases},
\end{equation}
and
\begin{equation}
\frac{\hat{g}_1}{\hat{g}_0}=\begin{cases}
t_l, &  l<t_l\\
l, &  t_l\leq l \leq t_u\\
t_u, &  l>t_u
\end{cases}.
\end{equation}
Moreover, all the Lagrangian constraints are satisfied and in particular the LFDs are obtained by solving
\begin{align*}
\int_{l\geq t_l} &g_0^U d\mu+\frac{1}{t_l}\int_{l< t_l} g_1^U \mathrm{d}\mu=1, \nonumber\\
\int_{l\leq t_u} &g_1^U d\mu+t_u\int_{l> t_u} g_0^U d\mu=1.
\end{align*}
\end{thm}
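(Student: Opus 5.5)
The plan mirrors the argument for Theorem~\ref{theorem1}, with the roles of lower and upper bounds exchanged. I would establish three structural claims from \eqref{eq100}--\eqref{eq101}, in this order:
\begin{enumerate}
\item The sets $A_0,A_1,\bar A_0,\bar A_1$ are all nonempty.
\item The intersection $\bar A_0\cap\bar A_1$ is empty.
\item On $\bar A_0$ we have $\hat g_0=k_2 g_1^U$, and on $\bar A_1$ we have $\hat g_1=k_1 g_0^U$.
\end{enumerate}

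For claim 1, if $\bar A_0=\emptyset$, then $g_0=g_0^U$ everywhere. Since $\int g_0^U\,d\mu=1+\epsilon_0>1$ whenever $\epsilon_0>0$, normalization fails. Conversely, if $A_0=\emptyset$, then $g_1/g_0$ is constant on all of $\Omega$, which forces $g_1=g_0$. This contradicts $\mathscr G_0\cap\mathscr G_1=\emptyset$ and the requirement that $D_u$ be maximized at a point strictly below $1$. The same reasoning applies to $A_1$ and $\bar A_1$.

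For claim 2, on $\bar A_0\cap\bar A_1$ both stationarity conditions in \eqref{eq101} hold, so $1/k_2=k_1$. Combined with claim 3 this gives $g_1^U/g_0^U=k_1$ on that set. If the set had positive measure, I would show that a small mass transfer in the direction permitted by the slack constraints strictly increases $D_u$. A more direct route is to show that the ordering $t_l=1/k_2<k_1=t_u$ is forced, which makes the two constant regions incompatible.

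For claim 3, I would use the following step. On $\bar A_0$, claim 2 gives $y\in A_1$, so $g_1=g_1^U$ there. Then \eqref{eq101} yields $g_0=k_2 g_1^U$ on $\bar A_0$. Symmetrically, on $\bar A_1$ we have $g_0=g_0^U$, hence $g_1=k_1 g_0^U$.

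With the three claims in hand, I would identify the regions as in Corollary~\ref{corollary2}. The constraint $g_0=k_2g_1^U<g_0^U$ on $\bar A_0$ means $l=g_1^U/g_0^U<1/k_2=t_l$. Similarly, $g_1=k_1 g_0^U<g_1^U$ on $\bar A_1$ means $l>k_1=t_u$. On $A_0\cap A_1$ both densities sit at their upper bounds, so the ratio equals $l$. This produces exactly the stated piecewise LFDs and the clipped LRF, and it shows $t_l<t_u$ because the two clipping regions are disjoint. The two normalization integrals then follow by integrating $\hat g_0$ and $\hat g_1$ piecewise.

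For feasibility, I would note that on $\{l<t_l\}$ we have $\hat g_0=g_1^U/t_l<g_0^U$, and on $\{l>t_u\}$ we have $\hat g_1=t_ug_0^U<g_1^U$. Positivity is automatic because the $g_j^U$ are nonnegative.

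Existence and uniqueness of $t_l,t_u$ come from monotonicity of the normalization equations. The map $t\mapsto\int_{l\ge t}g_0^U\,d\mu+t^{-1}\int_{l<t}g_1^U\,d\mu$ is continuous and nonincreasing in $t$. Its value ranges from $\int g_0^U\,d\mu>1$ down toward $0$, and it is strictly decreasing wherever the function is below $\int g_0^U\,d\mu$. By the intermediate value theorem it therefore crosses $1$ exactly once. The same argument applies to the equation for $t_u$. Uniqueness of the LFDs then also follows from Lemma~\ref{lem:unique_maximizer}.

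I expect the main obstacle to be claim 2, ruling out the overlap region $\bar A_0\cap\bar A_1$. Here the stationarity conditions alone do not exclude $k_1=1/k_2$, so a perturbation or concavity argument for $D_u$ is needed, or an argument that deduces $t_l<t_u$ from the normalization equations.
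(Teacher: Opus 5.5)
Your three-claim outline follows the paper's route: its proof of Theorem~\ref{theorem2} simply defers to the arguments for Theorem~\ref{theorem1} and Corollary~\ref{corollary2}. As written, though, it has a genuine gap exactly where you suspect, and the gap propagates.

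\textbf{Where the argument breaks.} Claim~2 is never established, and none of the three routes you sketch closes it.
\begin{itemize}
\item The attempt ``combined with claim~3'' is circular: claim~3 is derived from $\bar A_0\subseteq A_1$, which \emph{is} claim~2. Mirroring the paper's item~2 for Theorem~\ref{theorem1} does not help, since that step also invokes \eqref{eq90} on $\bar A_0\cap\bar A_1$.
\item A perturbation that only moves mass inside the slack sets cannot work. There the first variation of $D_u$ vanishes by \eqref{eq101}, so by concavity of $D_u$ such moves never increase it.
\item Deducing $t_l<t_u$ from the normalization equations is circular in a necessity argument, because those equations presuppose claims~2--3.
\end{itemize}
There is a second hole in the region identification. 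You only show $\bar A_0\subseteq\{l<t_l\}$ and $\bar A_1\subseteq\{l>t_u\}$. Nothing you have excludes points of $A_0\cap A_1$ with $l<t_l$ or $l>t_u$. Also, disjointness of $\bar A_0$ and $\bar A_1$ does not imply $t_l<t_u$.

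\textbf{The missing ingredient.} What you need is the inequality half of the KKT system, namely dual feasibility $\nu_j\ge0$ on the active sets $A_j$, which you never use.
\begin{itemize}
\item With $\lambda_j\equiv0$, \eqref{eq87} reads pointwise $(1-u)\hat l^{\,u}+\mu_0=\nu_0\ge0$ and $u\,\hat l^{\,u-1}+\mu_1=\nu_1\ge0$, where $\hat l=\hat g_1/\hat g_0$.
\item The slack sets give $-\mu_0=(1-u)t_l^{\,u}$ and $-\mu_1=u\,t_u^{\,u-1}$. Hence $\hat l\ge t_l$ on $A_0$ and $\hat l\le t_u$ on $A_1$. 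Equivalently, moving $g_0$-mass from a capped point into $\bar A_0$, or $g_1$-mass into $\bar A_1$, must not increase $D_u$.
\item \emph{Claim~2.} If $\bar A_0\cap\bar A_1$ were non-null, then $t_l=t_u$, and the inequalities force $\hat l\equiv t_l$ a.e. Normalization then gives $t_l=1$, so $\hat g_0=\hat g_1\in\mathscr G_0\cap\mathscr G_1$, a contradiction.
\item \emph{Region identification.} After your claim~3, the same inequalities give $A_0\cap A_1\subseteq\{t_l\le l\le t_u\}$. On $\bar A_1\subseteq A_0$, where $\hat l=t_u$, they give $t_u\ge t_l$, and equality is excluded by the same constant-ratio argument.
\item Now $\bar A_0$, $A_0\cap A_1$ and $\bar A_1$ partition $\Omega$ and lie in the pairwise disjoint sets $\{l<t_l\}$, $\{t_l\le l\le t_u\}$ and $\{l>t_u\}$, which also cover $\Omega$. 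So all inclusions are equalities, yielding the stated LFDs, the clipped LRF and the two normalization equations. Your monotonicity paragraph then goes through.
\end{itemize}

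\textbf{Your normalization idea, reused correctly.} It does work on the candidate itself. Write $m(t)=\int\min(g_1^U,t g_0^U)\,d\mu$. The equations become $m(t_l)=t_l$ and $m(t_u)=1$, and $\mathscr G_0\cap\mathscr G_1=\emptyset$ holds iff $m(1)<1$. Concavity and monotonicity of $m$ then give $t_l<1<t_u$. This is a clean way to show the candidate is well defined, but it does not replace the sign conditions in the necessity argument.
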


\begin{IEEEproof}
A proof of Theorem~\ref{theorem2} is given in Appendix~\ref{appendix_thm2}.
\end{IEEEproof}

\begin{thm}\label{theoremeps}
The LFDs in Theorem~\ref{theorem2} are single-sample minimax robust, i.e.
\begin{align}\label{eq105}
&G_0\left[\hat{l}< t\right]\geq \hat{G}_0\left[\hat{l}< t\right],\nonumber\\
&G_1\left[\hat{l}< t\right]\leq \hat{G}_1\left[\hat{l}< t\right]
\end{align}
for all $t\in\mathbb{R}_{\geq 0}$ and $(G_0,G_1)\in\mathscr{G}_0\times\mathscr{G}_1$.
\end{thm}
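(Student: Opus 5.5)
The plan is a direct pointwise comparison between an arbitrary $G_j\in\mathscr G_j$ and $\hat G_j$ on the level sets of $\hat l$, using only the band constraint $0\le g_j\le g_j^U$ and $\int g_j\,d\mu=1$. Since $\hat l$ is a nondecreasing function of $l=g_1^U/g_0^U$ (constant $t_l$ below $t_l$, equal to $l$ in the middle, constant $t_u$ above $t_u$), every set $\{\hat l<t\}$ is one of three types:
\[
\{\hat l<t\}=\begin{cases}\emptyset, & t\le t_l,\\ \{l<t\}, & t_l<t\le t_u,\\ \Omega, & t>t_u.\end{cases}
\]
For $t\le t_l$ or $t>t_u$ both inequalities in \eqref{eq105} hold trivially with equality, because every measure gives mass $0$ or $1$ to these sets. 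So it remains to treat $B_t:=\{l<t\}$ with $t_l<t\le t_u$.

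\emph{Inequality under $\mathcal H_1$.} On $B_t\subseteq\{l\le t_u\}$ we have $\hat g_1=g_1^U\ge g_1$ pointwise by Theorem~\ref{theorem2}. Integrating over $B_t$ gives $G_1[B_t]\le \hat G_1[B_t]$ immediately.

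\emph{Inequality under $\mathcal H_0$.} Here I would pass to complements. We need $G_0[B_t^c]\le \hat G_0[B_t^c]$, where $B_t^c=\{l\ge t\}\subseteq\{l\ge t_l\}$. On this set $\hat g_0=g_0^U\ge g_0$, so integrating gives the claim. Equivalently,
\[
G_0[B_t]=1-G_0[B_t^c]\ge 1-\hat G_0[B_t^c]=\hat G_0[B_t],
\]
where the normalization $\hat G_0[\Omega]=1$ is exactly the first equation of Theorem~\ref{theorem2}.

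The argument therefore works because, on the nontrivial thresholds, each event (or its complement) lies entirely in the region where the relevant LFD saturates its upper band. The one step needing care is to confirm this saturation on the boundary. Specifically, $\hat g_0=g_0^U$ must hold on all of $\{l\ge t_l\}$, including $l=t_l$, and $\hat g_1=g_1^U$ on $\{l\le t_u\}$, which is how Theorem~\ref{theorem2} defines them. One must also check that ties $\hat l=t$ at $t=t_l$ or $t=t_u$ do not break the case split; the strict inequality $\hat l<t$ handles this. Finally, the case $t_l<t_u$ must be assured, so that the middle region is well defined, and I would take this from the normalization equations as in Corollary~\ref{corollary2}.

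By Definition~\ref{defn:SMR} this establishes SMR, and Theorem~\ref{prop1} then extends it to FMR.
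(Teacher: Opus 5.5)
Your proposal is correct and is essentially the paper's proof. It uses the same three-way split on $t$. For $t_l<t\le t_u$ it rests on the same observation: $\{\hat l<t\}$ lies where $\hat g_1=g_1^U$, and its complement lies where $\hat g_0=g_0^U$, with the $\mathcal H_0$ inequality obtained by passing to complements. The only difference is cosmetic: the paper writes the bound $g_j\le g_j^U$ in contamination form, $G_j(A)=(1+\epsilon_j)F_j(A)-\epsilon_j H(A)$ with $0\le H(A)\le 1$, which is equivalent to your pointwise comparison.
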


\begin{IEEEproof}
A proof of Theorem~\ref{theoremeps} is given in Appendix~\ref{appendix_thmeps}.
\end{IEEEproof}

\subsubsection*{\text{\bf{Case 3}.} $g_j^L<g_j<g_j^U$ \text{(the general case)}}
\noindent The uncertainty classes for the general case are obtained by the intersection of lower and upper $\epsilon$-contamination neighborhoods
\begin{equation*}
\mathscr{G}_j=\mathscr{G}_j^{\epsilon^{-}}\cap\mathscr{G}_j^{\epsilon^{+}}.
\end{equation*}
There are six conditions regarding the Lagrangians
\begin{align}\label{eq108}
L_0:&\quad g_0=g_0^L\quad \mbox{on}\quad A_0,\quad g_0=g_0^U\quad \mbox{on}\quad A_1\quad \mbox{and}\quad g_0^L<g_0<g_0^U \quad \mbox{on}\quad A_2,\nonumber\\
L_1:&\quad g_1=g_1^L\quad \mbox{on}\quad A_3,\quad  g_1=g_1^U\quad \mbox{on}\quad A_4\quad \mbox{and}\quad g_1^L<g_1<g_1^U\quad \mbox{on}\quad A_5.
\end{align}
On regions where $g_0^L < g_0 < g_0^U$ and $g_1^L < g_1 < g_1^U$, the functional multipliers $\lambda_j$ and $\nu_j$ vanish pointwise by complementary slackness, and the stationarity conditions reduce to
\begin{align}\label{eq109}
\frac{g_1}{g_0}=k_2\quad \mbox{on}\quad  A_2=\{y:g_0^L<g_0<g_0^U\},\nonumber\\
\frac{g_1}{g_0}=k_1\quad \mbox{on}\quad  A_5=\{y:g_1^L<g_1<g_1^U\},
\end{align}
where $k_1$ and $k_2$ are some positive constants.

\begin{thm}\label{theorem3}
There are three different asymptotically minimax robust likelihood ratio functions,
\begin{equation*}
\text{\rm Type-A}:\quad\frac{\hat{g}_1}{\hat{g}_0}=\begin{cases}
g_1^U/g_0^L, &  g_1^U/g_0^L\leq k_2\\
k_2, &  g_1^U/g_0^L>k_2>g_1^U/g_0^U\\
g_1^U/g_0^U, &  k_2\leq g_1^U/g_0^U\leq k_1\\
k_1, &  g_1^U/g_0^U>k_1>g_1^L/g_0^U\\
g_1^L/g_0^U, &  g_1^L/g_0^U\geq k_1
\end{cases},
\end{equation*}

\begin{equation*}
\text{\rm Type-B}:\quad\frac{\hat{g}_1}{\hat{g}_0}=\begin{cases}
g_1^U/g_0^L, &  g_1^U/g_0^L\leq k_1\\
k_1, &  g_1^U/g_0^L>k_1>g_1^L/g_0^U\\
g_1^L/g_0^U, &  g_1^L/g_0^U\geq k_1
\end{cases},
\end{equation*}

\begin{equation*}
\text{\rm Type-C}:\quad\frac{\hat{g}_1}{\hat{g}_0}=\begin{cases}
g_1^U/g_0^L, &  g_1^U/g_0^L\leq k_1\\
k_1, &  g_1^U/g_0^L>k_1>g_1^L/g_0^L\\
g_1^L/g_0^L, &  k_1\leq g_1^L/g_0^L\leq k_2\\
k_2, &  g_1^L/g_0^L>k_2>g_1^L/g_0^U\\
g_1^L/g_0^U, &  g_1^L/g_0^U\geq k_2
\end{cases},
\end{equation*}

with the corresponding pairs of LFDs, respectively,

\begin{equation*}
\hat{g}_0=\begin{cases}
g_0^L, &  g_1^U/g_0^L\leq k_2  \\
\frac{1}{k_2}g_1^U, &  g_1^U/g_0^L>k_2>g_1^U/g_0^U\\
g_0^U, &  g_1^U/g_0^U\geq k_2
\end{cases},\quad
\hat{g}_1=\begin{cases}
g_1^L, &  g_1^L/g_0^U\geq k_1 \\
k_1g_0^U, &  g_1^U/g_0^U>k_1>g_1^L/g_0^U\\
g_1^U, & g_1^U/g_0^U\leq k_1
\end{cases},
\end{equation*}

\begin{equation*}
\hat{g}_0=\begin{cases}
g_0^L, &  g_1^U/g_0^L\leq k_1  \\
k_2(g_0^L+h_1), &  g_1^U/g_0^L>k_1\geq g_1^L/g_0^L\\
\frac{k_2}{k_1}(g_1^L+h_2), &  g_1^L/g_0^L>k_1>g_1^L/g_0^U\\
g_0^U, &  g_1^L/g_0^U\geq k_1
\end{cases},\quad
\hat{g}_1=\begin{cases}
g_1^U, &  g_1^U/g_0^L\leq k_1 \\
k_1k_2(g_0^L+h_1), &  g_1^U/g_0^L>k_1\geq g_1^L/g_0^L\\
k_2(g_1^L+h_2), &  g_1^L/g_0^L>k_1>g_1^L/g_0^U\\
g_1^L, & g_1^L/g_0^U\geq k_1
\end{cases},
\end{equation*}

\begin{equation*}
\hat{g}_0=\begin{cases}
g_0^L, &  g_1^L/g_0^L\leq k_2 \\
\frac{1}{k_2} g_1^L, &  g_1^L/g_0^L>k_2>g_1^L/g_0^U\\
g_0^U, &  g_1^L/g_0^U\geq k_2
\end{cases},\quad
\hat{g}_1=\begin{cases}
g_1^L, &  g_1^L/g_0^L\geq k_1 \\
k_1 g_0^L, &  g_1^U/g_0^L>k_1>g_1^L/g_0^L\\
g_1^U, & g_1^U/g_0^L\leq k_1
\end{cases},
\end{equation*}
Moreover, LRFs of \text{Type-A} and \text{Type-C} tend to clipped likelihood ratio functions, e.g., as given by \eqref{eq98} with the corresponding LFDs defined by \eqref{eq97}.
\end{thm}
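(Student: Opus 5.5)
I would prove Theorem~\ref{theorem3} by a region-by-region analysis of the stationarity conditions \eqref{eq109}, combined with the complementary slackness structure \eqref{eq108}. This mirrors the proofs of Theorems~\ref{theorem1} and~\ref{theorem2}. On each point $y$, the pair $(\hat g_0(y),\hat g_1(y))$ lies in one of the nine combinations $A_i\cap A_j$ with $i\in\{0,1,2\}$ and $j\in\{3,4,5\}$.

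\textbf{Step 1: admissible combinations.} First I determine which of these combinations can carry positive $\mu$-mass. On $A_2$ the ratio is forced to equal $k_2$, and on $A_5$ it equals $k_1$. Hence $A_2\cap A_5$ is $\mu$-null unless $k_1=k_2$, which I would exclude as degenerate, exactly as $\bar A_0\cap\bar A_1=\emptyset$ was shown in Theorem~\ref{theorem1}.

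On the remaining sets the ratio is one of the four corner ratios $g_1^U/g_0^L$, $g_1^U/g_0^U$, $g_1^L/g_0^L$, $g_1^L/g_0^U$. It is one of these on $A_{0,1}\cap A_{3,4}$. Alternatively it is a constant $k_1$ or $k_2$, with one density at a bound and the other determined by the ratio.

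The signs of the multipliers determine which corner is active as the nominal ratio increases. Where $\hat g_1/\hat g_0$ is small, $D_u$-maximization pushes $g_0$ down and $g_1$ up. Using $\lambda_j,\nu_j\ge0$ in \eqref{eq87} together with the monotone dependence of $(1-u)r^u$ and $ur^{u-1}$ on $r=g_1/g_0$, this gives $g_0=g_0^L$ and $g_1=g_1^U$ at the low end. At the high end it gives $g_0=g_0^U$ and $g_1=g_1^L$.

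\textbf{Step 2: chain structure.} The robust LRF must therefore increase from $g_1^U/g_0^L$ to $g_1^L/g_0^U$. Passing between the two extremes requires switching $g_0$ from its lower to its upper bound and $g_1$ from its upper to its lower bound. These two switches may occur through the two clipping constants in either order, or simultaneously through a single constant:
\begin{enumerate}
\item \emph{Type-A:} $g_0$ switches first through $k_2$, leaving the middle corner $g_1^U/g_0^U$, and then $g_1$ switches through $k_1$.
\item \emph{Type-C:} $g_1$ switches first through $k_1$, leaving the middle corner $g_1^L/g_0^L$, and then $g_0$ switches through $k_2$.
\item \emph{Type-B:} both switch on a common clipping set with constant $k_1$.
\end{enumerate}
This enumeration gives exactly the three types. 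Monotonicity of the ratio and continuity at the boundaries of the regions, as in Lemma~\ref{lemma2}, fix the LFD expressions. For example, $\hat g_0=g_1^U/k_2$ on the Type-A clipping set $\{g_1^U/g_0^L>k_2>g_1^U/g_0^U\}$, and $\hat g_0$ lies between the bounds there precisely by the defining inequalities.

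\textbf{Step 3: the Type-B case.} In Type-B both densities are interior on a common set. There only the ratio is pinned, so the densities must be parametrized. I would write them as $g_0^L+h_1$ and $g_1^L+h_2$, with nonnegative perturbations $h_1,h_2$ chosen to satisfy the band constraints, and with normalization fixing $k_2$. This is the step I expect to be the main obstacle: $h_1,h_2$ are not uniquely determined by first-order conditions. I would argue only existence, via normalization and an intermediate-value argument in $k_1,k_2$. In every type, $k_1,k_2$ are then determined by the two unit-mass equations, analogous to \eqref{eq99}.

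\textbf{Step 4: limiting behaviour.} For the final claim I would let $g_j^U\to\infty$ in Type-A and Type-C, holding the lower bounds fixed. The upper-bound regions then become empty and the middle corner becomes $g_1^L/g_0^L$. The LRF collapses to the clipped form \eqref{eq98} with LFDs \eqref{eq97}, matching Case~1. Symmetrically, letting $g_j^L\to0$ recovers Theorem~\ref{theorem2}.
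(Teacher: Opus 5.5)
Your derivation is correct in its core, but it follows a different route from the paper's. The paper's proof does not use the signs of the multipliers. It fixes by hand the bound on which the partner density sits in the interior sets: either $\hat g_0=g_1^L/k_2$ on $A_2$ and $\hat g_1=k_1g_0^L$ on $A_5$, or the two upper-bound analogues. It declares $A_1\cap A_5$, $A_2\cap A_4$ and $A_2\cap A_5$ empty as incompatible with \eqref{eq109}. It then makes $A_0,\dots,A_5$ explicit by a continuity/disjointness argument at points where, e.g., $g_1^L/k_2=g_0^U$. The lower-bound choice yields Type-C and the upper-bound choice Type-A. Type-B is obtained only as the merger of the three middle regions as $k_2\to k_1$.

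You instead use $\lambda_j,\nu_j\ge 0$ and the monotonicity of $(1-u)r^u$ and $ur^{u-1}$ to get a threshold rule. Under it, $\hat g_0$ sits at $g_0^L$ or $g_0^U$ according as $r<k_2$ or $r>k_2$, and $\hat g_1$ sits at $g_1^U$ or $g_1^L$ according as $r<k_1$ or $r>k_1$. The three types are then exactly the orderings $k_2<k_1$, $k_1<k_2$ and $k_1=k_2$. This gives you more than the paper's argument does:
\begin{itemize}
\item It shows the pairing is forced: on $A_2$ the partner is $g_1^U$ iff $k_2<k_1$, and on $A_5$ it is $g_0^U$ iff $k_1>k_2$. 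So the mixed choices that the paper silently omits cannot occur.
\item It needs no continuity of the bounding functions.
\item Type-B appears as a genuine stationary case rather than a limit.
\end{itemize}
The translation from $r$-regions to the bound-ratio inequalities of the statement is also pointwise. The ratio induced by the threshold rule is nonincreasing in $r$, so each $y$ admits exactly one consistent value $r(y)$. The continuity argument you borrow from Lemma~\ref{lemma2} is therefore not needed.

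Two slips should be fixed.

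In Step~1 you must not discard $k_1=k_2$ as degenerate. It is precisely the Type-B case of Step~3, and it is the source of the non-uniqueness through $h_1,h_2$.

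In Step~4, Type-A does not collapse to \eqref{eq98}. Its middle piece is $g_1^U/g_0^U$, and it is characterized by $k_2<k_1$. The limit without upper bounds instead forces $k_1<k_2$ (Corollary~\ref{corollary2} in the present notation). So sending $g_j^U\to\infty$ pushes the solution into Type-C, consistent with the Type-A $\to$ Type-B $\to$ Type-C transition seen in the simulations as $\varepsilon$ grows. The correct pairing is:
\begin{itemize}
\item Type-C tends to \eqref{eq98}--\eqref{eq97}.
\item Type-A tends to the clipped test of Theorem~\ref{theorem2} in $l=g_1^U/g_0^U$, once the outer regions $\{g_1^U/g_0^L\le k_2\}$ and $\{g_1^L/g_0^U\ge k_1\}$ become empty (e.g.\ as $g_j^L\to 0$).
\end{itemize}
The paper states this limit directly in terms of $k_1,k_2$ emptying the two outer regions, rather than through limits of the bounds.
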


\begin{IEEEproof}
A proof of Theorem~\ref{theorem3} is given in Appendix~\ref{appendix_band2}.
\end{IEEEproof}
In practice, two situations may arise. If the structural type of the robust likelihood ratio function is known a priori, the parameters $k_1$ and $k_2$ can be determined by enforcing the unit-mass constraints on the least favorable densities. Otherwise, the minimax problem formulated in Section~\ref{optimization} may be solved numerically as a convex optimization problem over the model \eqref{eq84}. In this case, the densities can be discretized by sampling, and numerical integration may be carried out using standard techniques such as the trapezoidal rule.

\section{Numerical Derivation of Minimax Robust Tests via Convex Optimization}\label{sec4}
When analytic characterizations of the least favorable distributions are unavailable or intractable, the asymptotic minimax design problem can be cast as a convex optimization problem after discretizing the domain~$\Omega$. This numerical formulation applies to a broad class of convex uncertainty sets. In this work, moment classes and $p$-point classes are treated as representative examples.\\
Let $\Omega$ be discretized into $n$ grid points $\{x_1,\dots,x_n\}$ with uniform spacing $\Delta x = (x_{\max}-x_{\min})/(n-1)$. Each density $g$ is represented as a nonnegative vector $g = (g_1,\dots,g_n)$ satisfying the normalization constraint
\begin{equation}
\sum_{i=1}^n g_i \,\Delta x = 1.
\end{equation}
The $u$-affinity functional $D_u(g_0,g_1)$ is then approximated using the trapezoidal integration rule as
\begin{equation}
\widehat{D}_u(g_0,g_1) \approx \sum_{i=1}^n g_{1,i}^u g_{0,i}^{1-u} \,\Delta x.
\end{equation}
In the asymptotic minimax robust formulation, the design of LFDs involves solving the minimax problem
\begin{equation}
\min_{u \in (0,1)} \; \max_{(g_0,g_1) \in \mathscr{G}_0\times\mathscr{G}_1} \; \widehat{D}_u(g_0,g_1),
\end{equation}
where $\mathscr{G}_0$ and $\mathscr{G}_1$ are convex sets describing the uncertainty classes. The inner maximization over $(g_0,g_1)$ is convex for fixed $u$, and the outer minimization over $u$ can be performed via a scalar search.\\ 
Based on Theorem~\ref{thm:FMR_implies_AMR}, whenever a finite-sample minimax robust (FMR) test exists for a given uncertainty class, the same least favorable distributions maximize $D_u$ for all $u\in(0,1)$. In such cases, the minimization over $u$ can be skipped entirely, and $u$ can be set to any convenient value (e.g., $u=0.5$). For the moment and $p$-point classes considered here, both the counterexample provided in \cite{veeravalli_counterexample} and our own numerical experiments indicate that they are not finite-sample minimax robust. Nevertheless, asymptotically minimax robust tests do exist for these classes, and their numerical derivation will be presented in the simulations section. 

\subsection{Moment Classes}
Moment classes, originally introduced in~\cite{moment}, model partial information about the distribution through bounds on generalized moments
\begin{equation}
\mathscr{G}_j=\left\{G_j\in\mathscr{M}: a_j^k\leq \mathbb{E}_{G_j}[h_j^k(Y)] \leq b_j^k\right\},\quad k\in\{1,\ldots,K\},
\end{equation}
where $h_j^k$ are real-valued continuous functions, $(a_j^k,b_j^k)$ are given bounds, and $K$ is the number of constraints. The bounds should be chosen so that $\mathscr{G}_0 \cap \mathscr{G}_1 = \emptyset$. Upon discretization, each moment constraint becomes a linear inequality in $g_j$,
\begin{equation}
a_j^k \leq \sum_{i=1}^n h_j^k(x_i) \, g_{j,i} \,\Delta x \leq b_j^k.
\end{equation}
The resulting finite-dimensional minimax program reads
\begin{align*}
\min_{u \in (0,1)} \ \max_{g_0,g_1} \quad & \sum_{i=1}^n g_{1,i}^u \, g_{0,i}^{\,1-u} \,\Delta x \\
\text{s.t.} \quad & \mathbf{1}^\top g_j \,\Delta x = 1,\quad g_j \geq 0,\quad j=0,1,\\
& a_j^k \leq (h_j^k)^\top g_j \,\Delta x \leq b_j^k,\quad k=1,\dots,K.
\end{align*}
This is a convex optimization problem in $(g_0,g_1)$ for every fixed $u$ and can be solved using general-purpose convex solvers; the minimization over $u$ can be performed, for example, via a simple grid search. An example of LFDs for a specific set of moment bounds is given in Section~\ref{sec5}.

\subsection{P-point Classes}
The p-point class models partial information in the form of probability masses assigned to disjoint subsets of $\Omega$. A generalized definition is
\begin{equation}\label{eq:p_point_class}
\mathscr{G}_j=\left\{G_j\in\mathscr{M}: a_j^k \leq G_j(A_j^k)\leq b_j^k\right\},\quad k\in\{1,\ldots,K\},
\end{equation}
where $A_j^k \in \mathscr{A}$ are disjoint measurable subsets and $(a_j^k,b_j^k)$ specify allowable mass intervals. This generalizes the models in~\cite{elsawy,elsawy2}. When $\Omega$ is discretized, each $A_j^k$ becomes a known index set $\mathcal{I}_j^k \subset \{1,\dots,n\}$, and~\eqref{eq:p_point_class} reduces to linear constraints
\begin{equation}
a_j^k \leq \sum_{i\in\mathcal{I}_j^k} g_{j,i} \,\Delta x \leq b_j^k.
\end{equation}
The resulting optimization has the same structure as in the moment-classes. An example of LFDs under $p$-point constraints is provided in Section~\ref{sec5}.

\subsection{Hybrid Models and Generality of the Method}
The same discretization–optimization framework can handle hybrid models that combine different convex constraints, such as simultaneous moment and $p$-point bounds. More generally, any convex uncertainty class whose constraints admit a convex finite-dimensional representation after discretization can be handled by this method. The choice of $n$ and $\Delta x$ determines the trade-off between numerical accuracy and computational cost; typical values of $n\in\{100,200\}$ yield sufficient accuracy for the smooth densities considered here.

\begin{algorithm}[t]
\caption{Numerical LFD Design via Convex Optimization}
\label{alg:lfd_design}
\begin{algorithmic}[1]
\REQUIRE Uncertainty classes $\mathscr{G}_0$, $\mathscr{G}_1$ with convex constraints; 
         domain $\Omega = [x_{\min},x_{\max}]$; 
         number of grid points $n$; 
         optional fixed $u_0 \in (0,1)$ if $u$-minimization is skipped.
\ENSURE Least favorable densities $(\hat{g}_0,\hat{g}_1)$.
\STATE Discretize $\Omega$ into $n$ grid points $\{x_1,\dots,x_n\}$ with spacing $\Delta x$.
\STATE Express the constraints of $\mathscr{G}_0$, $\mathscr{G}_1$ as inequalities in $g_0$, $g_1$.
\IF{$u$-minimization is required}
    \STATE Initialize a search set $\mathcal{U} \subset (0,1)$.
    \FOR{each $u \in \mathcal{U}$}
        \STATE Solve the convex program:
        \[
        \begin{aligned}
        \text{maximize} \quad & \sum_{i=1}^n g_{1,i}^u g_{0,i}^{1-u} \,\Delta x \\
        \text{subject to} \quad & \mathbf{1}^\top g_j \,\Delta x = 1, \quad g_j \geq 0, \quad j=0,1, \\
                                & \text{class constraints for }\mathscr{G}_0,\mathscr{G}_1.
        \end{aligned}
        \]
        \STATE Record the objective value and corresponding $(g_0,g_1)$.
    \ENDFOR
    \STATE Select $u^\ast$ and $(\hat{g}_0,\hat{g}_1)$ achieving the minimum over $u$.
\ELSE
    \STATE Solve the convex program above for $u = u_0$.
\ENDIF
\RETURN $(\hat{g}_0,\hat{g}_1)$.
\end{algorithmic}
\end{algorithm}

\subsection{Summary}
For moment classes, $p$-point classes, and their hybrids, the LFDs are obtained by discretizing the domain and solving a finite-dimensional convex program for each fixed $u$, combined with an outer minimization over $u$ (which can be implemented via line search or bisection). The overall procedure is summarized in Algorithm~\ref{alg:lfd_design}. This framework extends the analytic solutions available for divergence-based classes and the band model, providing a unified computational method for robust hypothesis testing across a wide range of convex uncertainty classes.

\section{Simulations}\label{sec5}
This section illustrates and evaluates the theoretical findings through representative examples. The notation $\mathcal{N}(\mu,\sigma^2)$ denotes a Gaussian distribution with mean $\mu$ and variance $\sigma^2$, and $f_\mathcal{N}$ denotes the corresponding density function. All systems of equations are solved using the damped Newton method~\cite{ralph}, and convex optimization problems are handled by interior-point methods~\cite{potra}.

\subsection{Total Variation and $\epsilon$-contamination Neighborhoods}
In this subsection, the least favorable densities (LFDs) and the corresponding likelihood ratio functions (LRFs) resulting from the total variation neighborhood are illustrated and compared with those of the classical $\epsilon$-contamination model. Unit-variance mean-shifted Gaussian distributions $f_j \sim \mathcal{N}(2j-1,1)$ are considered as the nominal distributions. For various configurations, the parameters of the least favorable densities given in Theorem~\ref{theorem31} and Remark~\ref{rem_contamination} have been obtained by solving the pair of equations given by \eqref{eq:tu_equation} and \eqref{eq99}, respectively. Figure~\ref{fig1} illustrates the resulting LFDs and the corresponding LRFs together with the nominal distributions. From this example the following observations can be made:

\begin{figure}[ttt]
  \centering
  \centerline{\includegraphics[width=16.5cm]{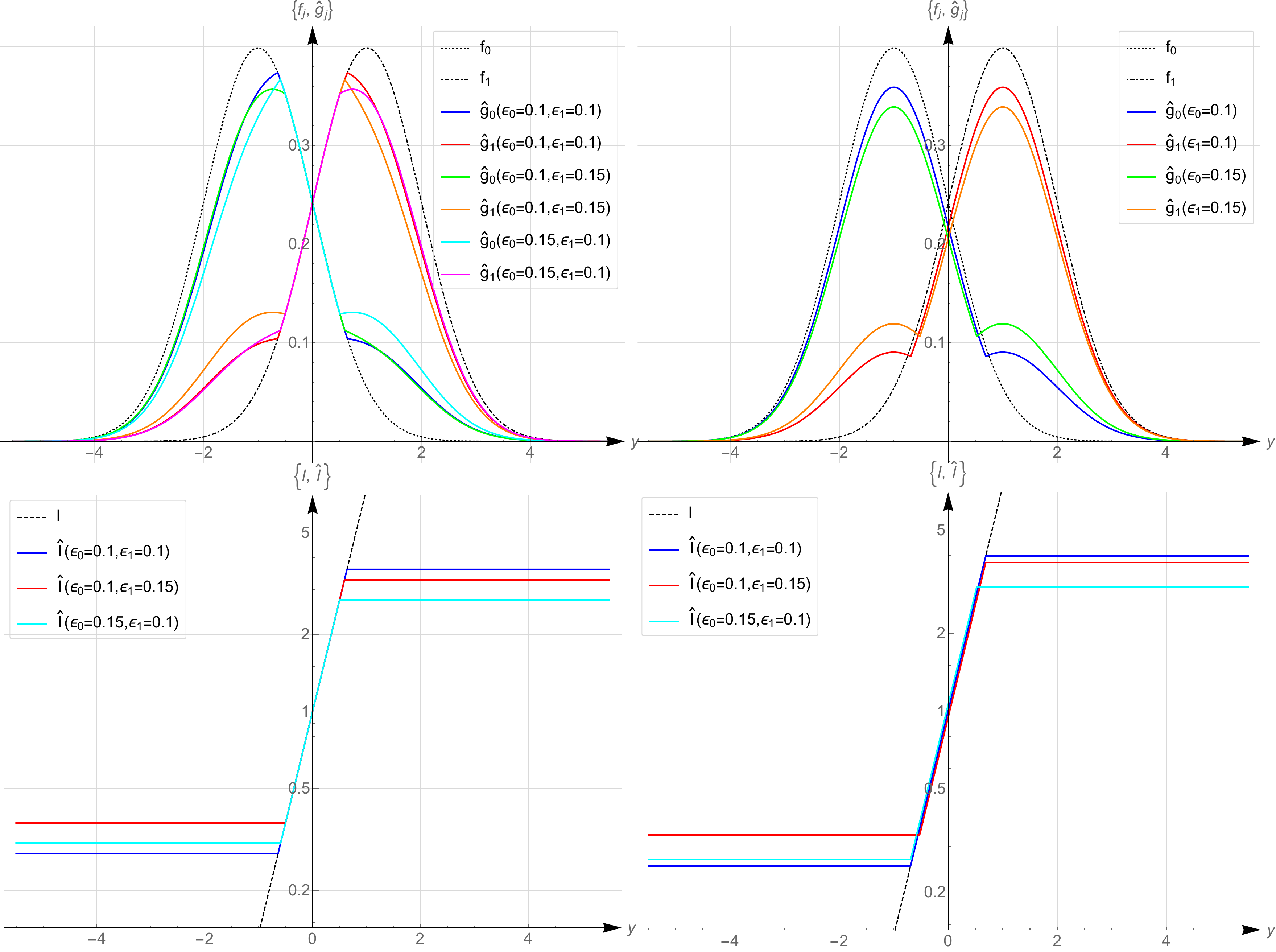}}
\caption{Least favorable densities (top) and robust likelihood ratio functions (bottom) for (left) total variation distance based uncertainty classes and (right) $\epsilon$-contamination model.\label{fig1}}
\end{figure}

\begin{enumerate}
\item Both models lead to clipped likelihood ratio tests; however, their LFDs are not identical even when their LRFs coincide. This can be deduced by examining the region where the robust and nominal LRFs are equal (i.e. for $\epsilon_0=\epsilon_1=0.1$ and $y \in [-0.7,0.7]$). In this region, the LFDs of the total variation neighborhood coincide with each other, while those of the $\epsilon$-contamination model do not. Hence, the two models differ in their least favorable density structures, despite having exactly the same LRFs.
\item Identical clipping thresholds $(t_\ell,t_u)$ can be obtained by appropriately adjusting the robustness parameters. In particular, the total variation model with $\epsilon_0=\epsilon_1\approx 0.08875$ produces the same upper and lower thresholds as the $\epsilon$-contamination model with $\epsilon_0=\epsilon_1=0.1$. This indicates that, in terms of likelihood-ratio compression, the total variation neighborhood induces stronger robustness effects for the same robustnes parameters.
\item The effects of unequal robustness parameters are similar at the lower and upper clipping regions, and different in the middle region. For both models, increasing either $\epsilon_0$ or $\epsilon_1$ decreases the upper clipping threshold and increases the lower one (a vertical compression), while the horizontal displacement—rightward for larger $\epsilon_1$ and leftward for larger $\epsilon_0$—is observed only in the $\epsilon$-contamination model.
\item From a computational standpoint, both models require two equations -each having a single variable- to be solved. While, the two equations of the total variation based uncertainty model are coupled in $\epsilon_0$ and $\epsilon_1$, those that of the $\epsilon$-contamination model are not. This explains why one needs three pairs of LFD for the total variation based uncertainy model and only two pairs for the $\epsilon$-contamination model in Figure~\ref{fig1}.
\item The overall deformation of the LRFs with changing robustness parameters therefore differs fundamentally between the two models.
\end{enumerate}
These comparisons highlight an important distinction: while both uncertainty models yield clipped likelihood ratio tests, their least favorable densities exhibit fundamentally different behaviors, particularly when $\epsilon_0 \neq \epsilon_1$. The new analytical formulation for the total variation neighborhood thus enables robust test design under unequal robustness parameters—a capability not available in previous literature.

\subsection{Band Model}
Asymptotically minimax robust tests arising from the band model can similarly be simulated. Consider the lower bounding functions
\begin{equation*}
g_0^L(y)=(1-\epsilon)f_{\mathcal{N}}(y;-1,4),\quad g_1^L(y)=(1-\epsilon)f_{\mathcal{N}}(y;1,4),
\end{equation*}
where the contamination ratio is chosen to be $\epsilon=0.2$. Furthermore, let the upper bounding functions be
\begin{equation*}
g_0^U(y)=(1+\varepsilon)f_{\mathcal{N}}(y;-1,4),\quad g_1^U(y)=(1+\varepsilon)f_{\mathcal{N}}(y;1,4),
\end{equation*}
with the parameters $\varepsilon=0.2$ (Type-A), $\varepsilon=0.5$ (Type-B), $\varepsilon=1.5$ (Type-C) or $\varepsilon=19$ (Type-C), simulating three different types of robust LRFs resulting from the band model, cf. Section~\ref{sec6_band}.\\
For this setup, and excluding $\varepsilon=19$ for the sake of clarity, Figure~\ref{fig9} illustrates the corresponding LFDs together with the lower bounding functions, and the upper bounding functions for $\varepsilon=0.2$. For $\varepsilon=0.5$, the LFDs are overlapping around $y=0$, leading to $\hat{l}=1$. This type of overlapping has previously been reported by \cite{gul6} for single-sample minimax robust tests obtained from the KL-divergence neighborhood. However, the test in \cite{gul6} is not minimax robust unless a well defined randomized decision rule is used.\\
In Figure~\ref{fig10}, the corresponding robust likelihood ratio functions are illustrated. Increasing $\varepsilon$ transforms the corresponding robust LRF from Type-A to Type-B and then to Type-C. Further increasing $\varepsilon$, i.e. when $\varepsilon=19$, the robust LRF tends to a clipped likelihood ratio test, which is the limiting LRF stated in Section~\ref{sec6_band}. The robust LRFs can take different shapes depending on the bounding functions. Similar patterns were stated in \cite{kassamband} and also observed in \cite{fauss}.\\
In the second example the variance of Type-B upper bounding functions are changed from $4$ to $9$ and the rest of the setup is kept the same as before. Figure~\ref{fig88} illustrates the LFDs together with the upper and lower bounding functions, and the corresponding robust LRF. This example shows both an asymmetric and a degenerate case, where the latter is due to the fact that the region $g_1^U/g_0^L\leq k_1$ does not exist, as the optimum solution satisfies $g_1^U/g_0^L > k_1$. Also the LFDs in the middle two regions, where the corresponding LRF is constant, were determined as a result of numerical optimization, see Section~\ref{sec4}, as the LFDs in these regions depend on two general functions $h_1$ and $h_2$, which cannot be recovered as linear combinations of upper and lower bounding functions.

\begin{figure}[ttt]
  \centering
  \centerline{\includegraphics[width=8.8cm]{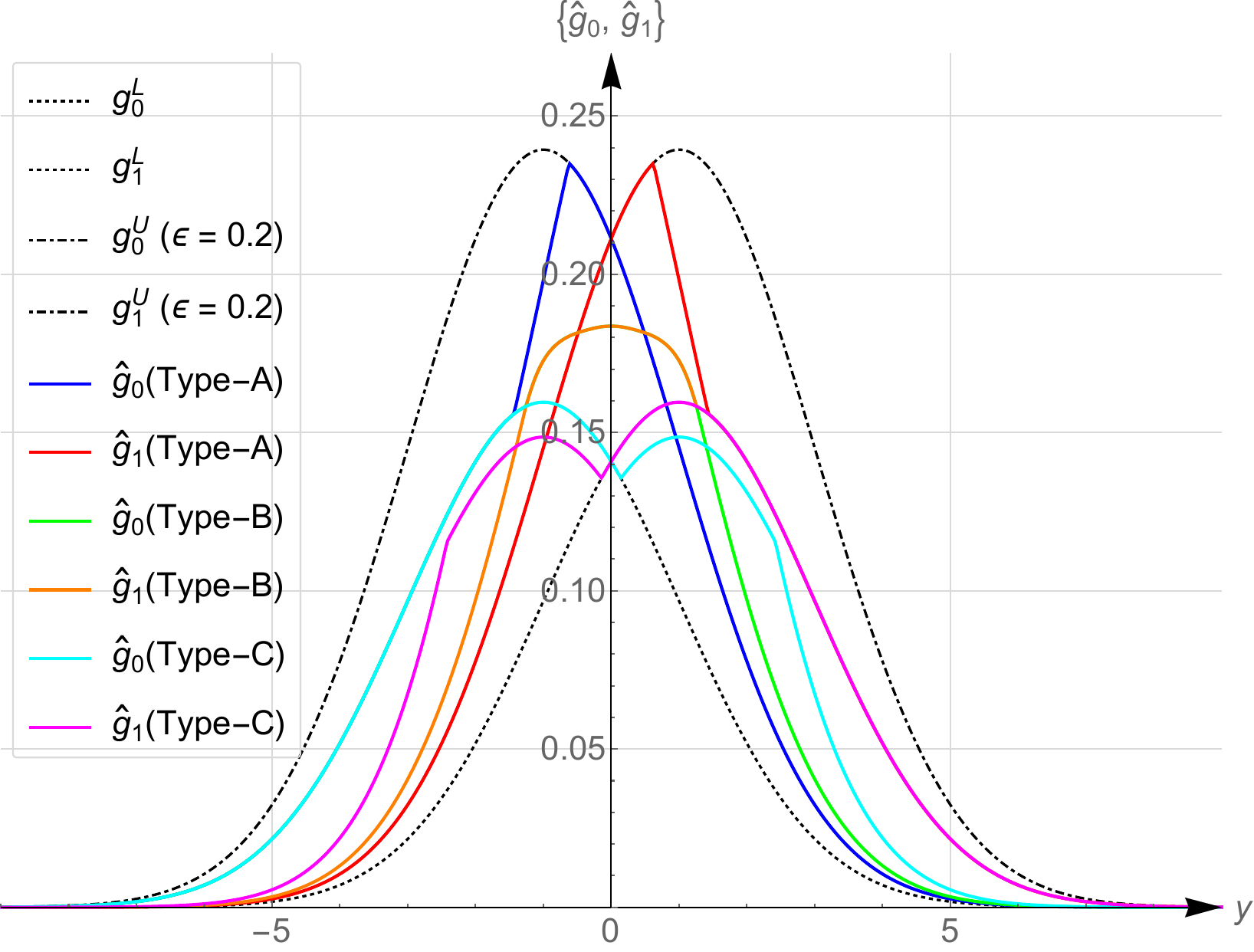}}
\caption{Three different pairs of LFDs arising from the band model together with the bounding functions for $\varepsilon\in\{0.2,0.5,1.5\}$.\label{fig9}}
\end{figure}

\begin{figure}[ttt]
  \centering
  \centerline{\includegraphics[width=8.8cm]{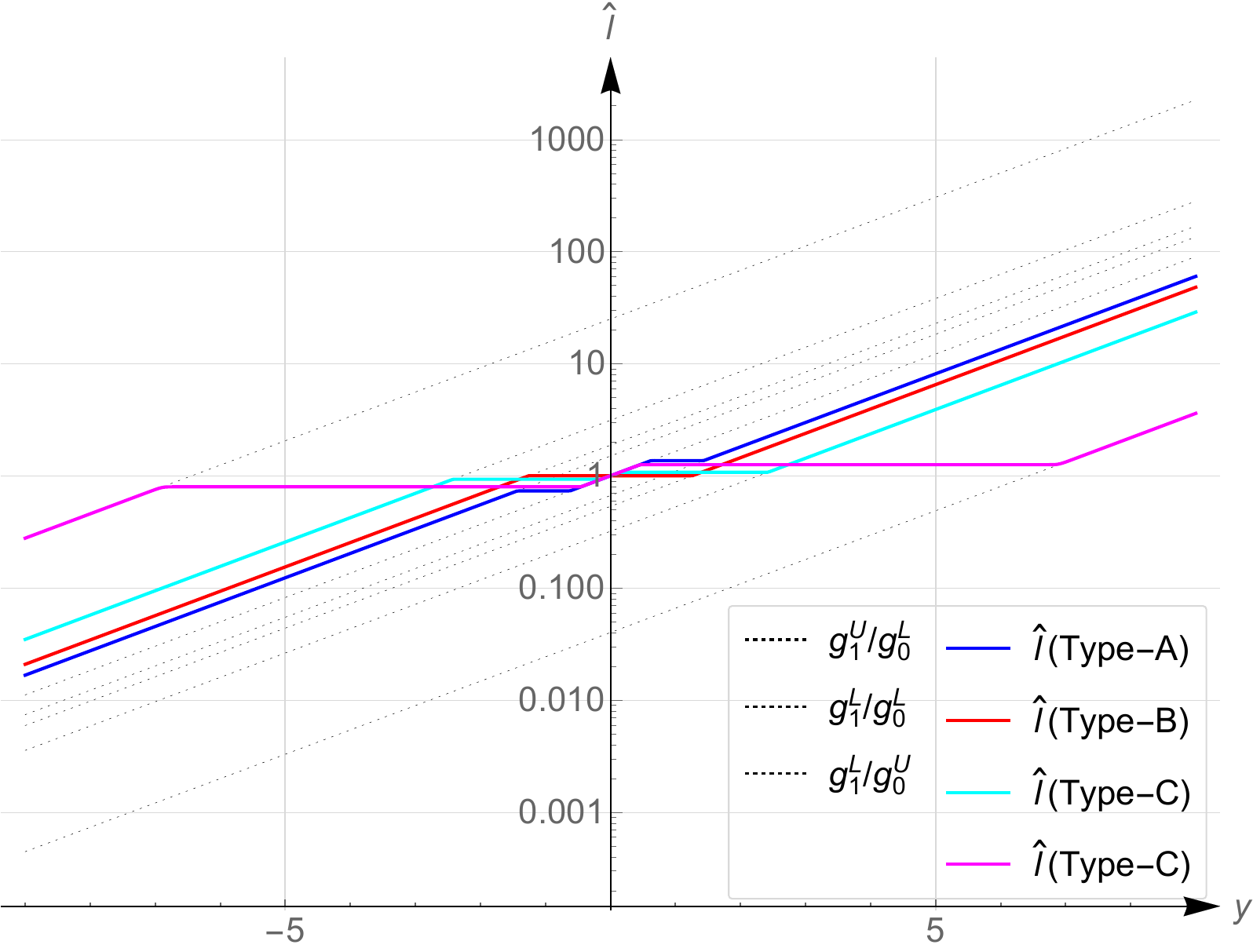}}
\caption{Three different types of Robust LRFs arising from the band model for $\varepsilon\in\{0.2,0.5,1.5,19\}$ together with the nominal LRF.\label{fig10}}
\end{figure}

\begin{figure}[ttt]
  \centering
  \centerline{\includegraphics[width=8.8cm]{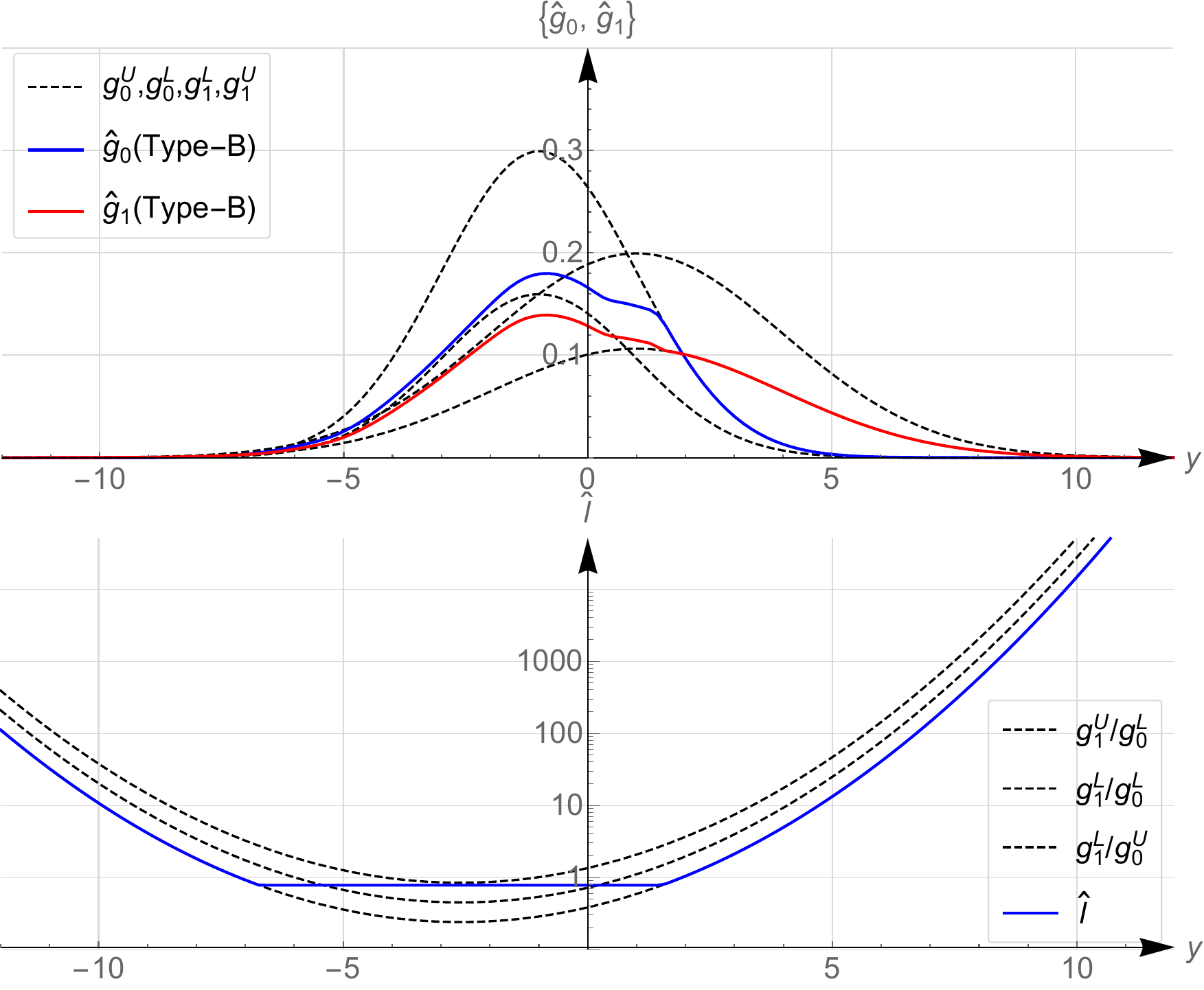}}
\caption{Degenerate Type-B least favorable densities (top) and likelihood ratio functions (bottom) for band model with asymmetric variances.\label{fig88}}
\end{figure}

\subsection{Moment Classes}
The LFDs and robust LRFs arising from the moment classes can be exemplified by solving the convex optimization problem numerically by using Algorithm~\ref{alg:lfd_design} where the class constraints
\begin{align*}
-2 \leq &\mathbb{E}_{G_0}[Y] \leq -0.5,& 0.5 \leq \mathbb{E}_{G_1}[Y] \leq 2,\nonumber \\
0 \leq &\mathbb{E}_{G_0}[Y^2] \leq 2,& 2 \leq \mathbb{E}_{G_1}[Y^2] \leq 4,
\end{align*}
are defined over the first and second moments of the probability density functions. Figure~\ref{fig19} illustrates the LFDs and the corresponding robust LRF, respectively.
\begin{figure}[ttt]
  \centering
  \centerline{\includegraphics[width=8.8cm]{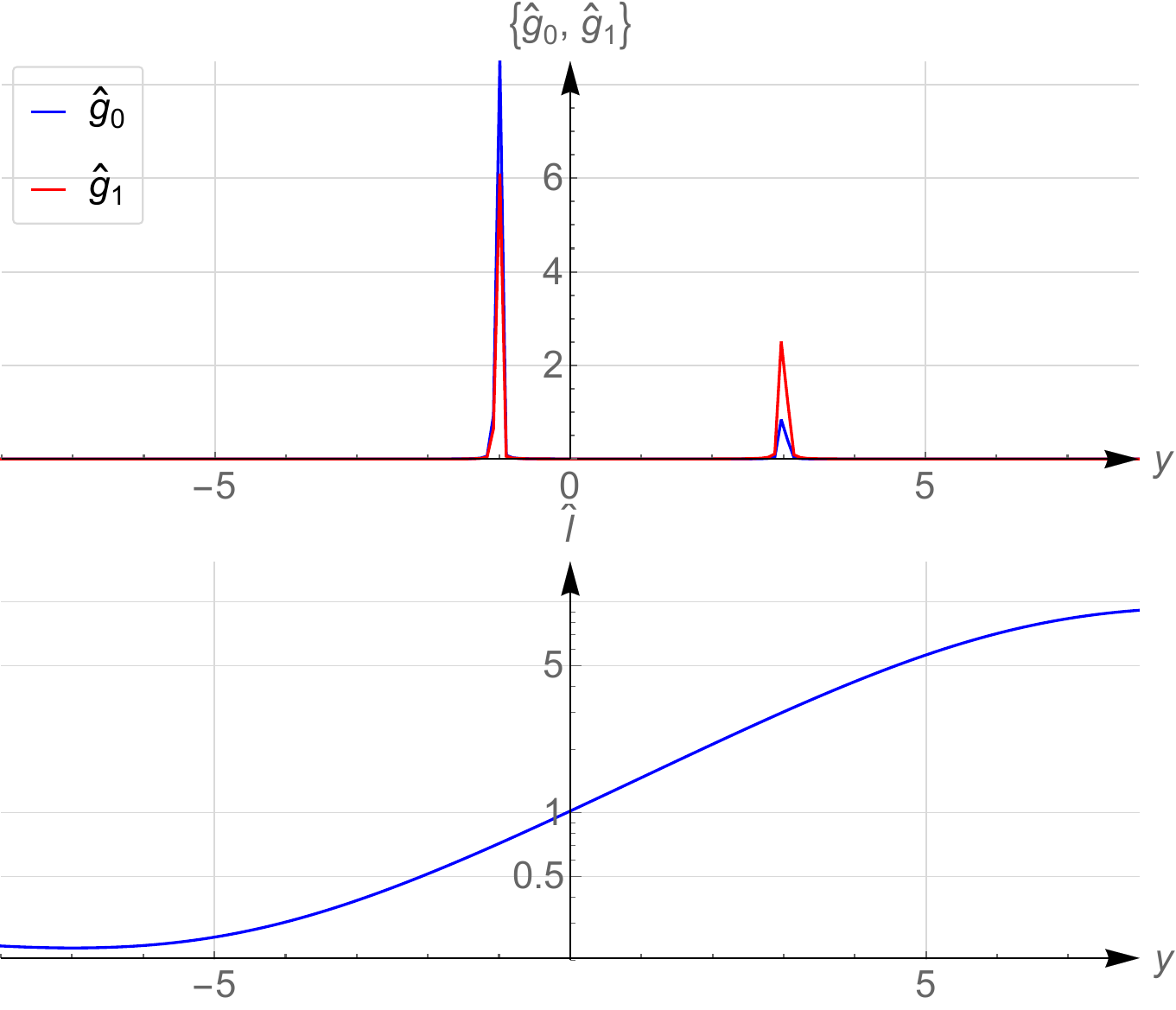}}
\caption{ Least favorable densities (top) and the likelihood ratio function (bottom) for moment-constrained uncertainty classes in the given example.\label{fig19}}
\end{figure}
\begin{figure}[ttt]
  \centering
  \centerline{\includegraphics[width=8.8cm]{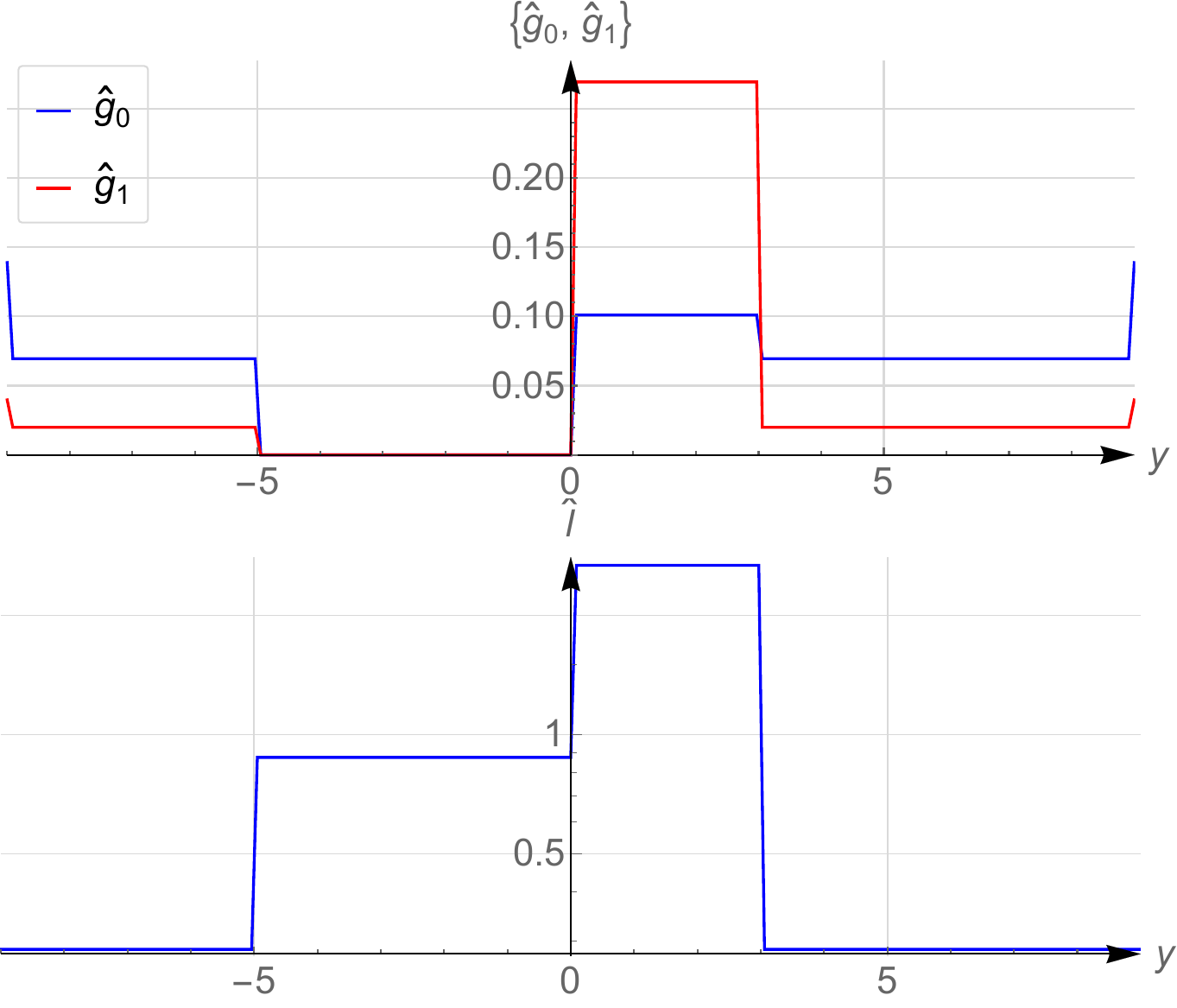}}
\vspace{-3mm}
\caption{Least favorable densities (top) and the likelihood ratio function (bottom) for p-point uncertainty classes in the given example.\label{fig21}}
\end{figure}

\subsection{P-point Classes}
Similarly, an example to the asymptotically minimax robust test arising from the p-point classes can be given. Consider the p-point classes defined by the constraints
\begin{align*}
\int_{-5}^{3} g_0(y)d y \leq 0.3,\quad \int_{0}^{3} g_1(y)d y \geq 0.8.
\end{align*}
Figure~\ref{fig21} illustrates the LFDs and the corresponding robust LRF, respectively.\\
A nice property stated by Lemma~\ref{thm:AMR_implies_FMR_conditional} can be applied here to see whether the obtained AMR tests are also FMR. For both moment classes and p-point classes the LFDs vary with varying $u$. This implies that the tests derived are only AMR and not FMR.
\section{Conclusion}\label{sec6}
This paper has established a formal equivalence between finite-sample and asymptotically minimax robust hypothesis testing. Specifically, it was shown that when a finite-sample minimax robust test exists, it coincides with the test derived via asymptotic minimax theory. This result provides a unifying perspective and enables the analytical derivation of minimax robust tests without relying on heuristic constructions.\\
As a demonstration, the total variation and band model uncertainty classes were analyzed. In both cases, least favorable distributions and the associated robust likelihood ratio functions were derived in closed parametric forms. The results generalized earlier work, notably extending Huber’s design to allow for unequal robustness parameters and offering an analytical foundation for designs previously constructed heuristically.\\
Beyond these specific models, the findings in this paper suggest that asymptotic theory offers a broadly applicable and efficient framework for designing finite-sample minimax robust tests whenever they exist. This closes a long-standing gap in the literature and provides new tools for robust decision-making under distributional uncertainty.

\appendices\label{appx}

\section{Proof of Theorem~\ref{thm1}}\label{appendix0}
The equivalence stated by \eqref{eq13} was proven in \cite[Section 6]{hube73}\footnote{In \cite{hube73} $\Omega$ is defined to be a complete separable metrizable space. Furthermore, if all $(G_0,G_1)\in\mathscr{G}_0\times\mathscr{G}_1$ are absolutely continuous with respect to a fixed measure $\mu$, $\Omega$ may need to be finite.} for a version of $D_f$;
\begin{equation*}
D_{f^{*}}(G_0,G_1)=\int_{\Omega}f^{*}\left(\frac{g_0}{g_0+g_1}\right)(g_0+g_1)d \mu,
\end{equation*}
where $f^{*}:(0,1)\to \mathbb{R}$ is a twice continuously differentiable and convex function. Let $f^{**}(y)=f^{*}(y)-f^{*}(1/2)$ which results in $f^{**}(1/2)=0$. By \cite[Equations 7-8]{vajda}, it is known that $D_{f}=D_{f^{**}}$ using the transformation $f^{**}(t)=tf((1-t)/t)$ for $t\in(0,1)$. Hence, minimizing $D_{f}$ and $D_{f^{**}}$ are equivalent over all twice continuously differentiable and convex $f$.

\section{Proof of Lemma~\ref{lem:unique_maximizer}}\label{appendix1}
\noindent\textbf{Existence:} Since $D_u$ is upper semicontinuous on the compact set $\mathscr{G}_0 \times \mathscr{G}_1$, it attains its maximum. Let $M = \max D_u > 0$ by assumption.\\
\noindent\textbf{Uniqueness:} Suppose $(G_0^*, G_1^*)$ and $(G_0', G_1')$ are two distinct maximizers achieving $D_u = M > 0$. Since $\mathscr{G}_0 \cap \mathscr{G}_1 = \emptyset$, we cannot have $G_0^* = G_1^*$ or $G_0' = G_1'$. Thus, strict concavity of $(g_0, g_1) \mapsto g_1^u g_0^{1-u}$ implies that for $t \in (0,1)$,
\begin{equation}
D_u(t G_0 + (1-t) G_0', t G_1 + (1-t) G_1') > t D_u(G_0, G_1) + (1-t) D_u(G_0', G_1') = M,
\end{equation}
a contradiction. Hence the maximizer is unique up to $\mu$-null sets.

\section{Proof of Theorem~\ref{theorem1}}\label{appendix_band1}
\mbox{\bf 1.} The sets $\bar{A}_0$ and $\bar{A}_1$ are trivially non-empty. If not, we have $\int_{\Omega}\hat{g}_0=\int_{\Omega}g_0^L d\mu<1$ and $\int_{\Omega}\hat{g}_1 d\mu=\int_{\Omega}g_1^L d\mu<1$, which are contradictions with the fact that $\hat{g}_0$ and $\hat{g}_1$ are density functions. The set $A_0$ is also non-empty and this can be shown again with contradiction. Assume that $A_0$ is empty. In this case, $A_1$ can either be empty or non-empty. Assume that $A_1$ is also empty. Then, by \eqref{eq89}, we necessarily have $\hat{g}_0=\hat{g}_1$ a.e., which is excluded by a suitable choice of $g_0^L$ and $g_1^L$. Therefore, $A_1$ is non-empty. If $A_1$ is non-empty, then we must have $\hat{g}_0=k_2g_1^L$ on $\bar{A}_0$. If not, $\hat{g}_1/\hat{g}_0$ will not be a constant function on $\bar{A}_0\cap A_1$, which is non-empty since $\Omega=\bar{A}_0$. This again yields a contradiction with $\eqref{eq89}$. Since $\hat{g}_0=k_2g_1^L$ is defined on $\Omega$, in order to satisfy \eqref{eq89}, $\hat{g}_1$ must also be  $g_1^L$ on $\bar{A}_1$. Hence, we have $\hat{g}_1=g_1^L$ a.e. which is again a contradiction with the fact that $\int_{\Omega}\hat{g}_1=1$. Therefore, $A_0$ is non-empty. A similar analysis shows that $A_1$ is also non-empty.\\
\mbox{\bf 2.} The set $\bar{A}_0\cap \bar{A}_1$ is empty. If not, from \eqref{eq90} and \eqref{eq89} we have
\begin{equation}\label{eq93}
\frac{\hat{g}_1}{\hat{g}_0}=\frac{k_1}{k_2}\frac{g_0^L}{g_1^L}=k_1=\frac{1}{k_2}.
\end{equation}
This implies ($\bar{A}_0\cap A_1)\cup (A_0\cap \bar{A}_1)=\Omega$, hence, both $\bar{A}_0\cap \bar{A}_1$ and $A_0\cap A_1$ are empty sets. Since, $A_0\cap A_1$ is non-empty, we have a contradiction, hence, $\bar{A}_0\cap \bar{A}_1$ must be empty.\\
\mbox{\bf 3.} The set $A_0\cap A_1$ is non-empty. If not, $A_0$ and $A_1$ are disjoint sets. This implies at least non-empty $\bar{A}_0\cap A_1$ and $A_0\cap \bar{A}_1$ and at most additionally non-empty $\bar{A}_0\cap \bar{A}_1$. Non-empty $\bar{A}_0\cap \bar{A}_1$ implies $\hat{g}_1/\hat{g}_0=k$ a.e on $\Omega$, see \eqref{eq93}, and this is impossible, unless $k=1$. If only $\bar{A}_0\cap A_1$ and $A_0\cap \bar{A}_1$ are non-empty, i.e. if $\bar{A}_0\cap \bar{A}_1$ and $A_0\cap A_1$ are empty, hence, $(\bar{A}_0\cap A_1) \cup (A_0\cap \bar{A}_1)=\Omega$, we have $A_0= \bar{A}_1$ and $A_1=\bar{A}_0$ together with $A_0\cup A_1=\Omega$. This is possible if and only if $k=k_1=1/k_2$, because
\begin{align*}
\bar{A}_0\cap A_1&=\{g_0>g_0^L,g_1=g_1^L\}=\{1/k_2=g_1/g_0<g_1^L/g_0^L\}, \nonumber\\
A_0\cap \bar{A}_1&=\{g_1>g_1^L,g_0=g_0^L\}=\{k_1=g_1/g_0>g_1^L/g_0^L\}.
\end{align*}
The condition $k=k_1=1/k_2$ also implies $\hat{g}_1/\hat{g}_0=1$ a.e on $\Omega$, which is avoided by suitable choices of $g_0^L$ and $g_1^L$. Hence, $A_0\cap A_1$ cannot be empty.\\
The sets $\bar{A}_0\cap A_1$ and $A_0\cap \bar{A}_1$ are both non-empty. From $A_0\cap A_1\neq\emptyset$, there are four cases  $A_0\subset A_1$, $A_1\subset A_0$, $A_0=A_1$, or $A_0\backslash A_1$ and $A_1\backslash A_0$ are both non-empty.
The first three conditions imply either non-empty $\bar{A}_0\cap \bar{A}_1$, or $A_0=\Omega$, $A_1=\Omega$ or both. The first condition is a contradiction with \eqref{eq93} and the other three imply $\hat{g}_j=g_j^L$ on $\Omega$, which is impossible, see \eqref{eq90}. Therefore, we have non-empty $A_0\cap A_1$ together with non-empty $A_0\backslash A_1$ and $A_1\backslash A_0$. This eventually implies non-empty $\bar{A}_0\cap A_1$ and $A_0\cap \bar{A}_1$.\\
It is known that $\hat{g}_1=g_1^L$ on $A_1$ and on $\bar{A}_0\cap A_1$ we have $\hat{g}_1/\hat{g}_0=1/k_2$. Hence, on $\bar{A}_0$ we must have $\hat{g}_0=k_2 g_1^L$. Similarly, on $\bar{A}_1$ we have $\hat{g}_1=k_1 g_0^L$.

\section{Proof of Corollary~\ref{corollary2}}\label{appendix_cor2}
$k_1=1/k_2$ implies empty $A_0\cap A_1$, which is impossible, and $k_1>1/k_2$ implies non-empty $(\bar{A}_0\cap A_1)\cap (A_0\cap \bar{A}_1)$, which in turn implies $k_1=1/k_2$, another contradiction. Therefore, we have $k_1<1/k_2$. Accordingly, the sets $A_0$ and $A_1$ can be written as
\begin{align*}
A_0=&(A_0\cap A_1)\cup (A_0\cap \bar{A}_1)=\{k_1\leq g_1^L/g_0^L \leq 1/k_2\}\cup \{k_1>g_1^L/g_0^L\}=\{g_1^L/g_0^L\leq 1/k_2\},\nonumber\\
A_1=&(A_0\cap A_1)\cup (\bar{A}_0\cap A_1)=\{k_1\leq g_1^L/g_0^L \leq 1/k_2\}\cup \{1/k_2<g_1^L/g_0^L\}=\{g_1^L/g_0^L\geq k_1\}.
\end{align*}

\section{Proof of Theorem~\ref{theorem2}}\label{appendix_thm2}
The definition of the sets $A_j$, their intersections, their relation to $l$, $k_1$ and $k_2$, and the fact that $k_1>1/k_2$ trivially follow from the same line of arguments used in Theorem~\ref{theorem1} and Corollary~\ref{corollary2} by considering \eqref{eq100} and \eqref{eq101}. The lower bounding function constraints are automatically satisfied as $\hat{g}_0$ and $\hat{g}_1$ are non-negative functions. The upper bounding function constraints are also satisfied in the same way as explained in Case $1$. The LFDs are obtained by unit density function constraints.

\section{Proof of Theorem~\ref{theoremeps}}\label{appendix_thmeps}
For any $g_j\in \mathscr{G}_j$, if $t>t_u$, the event $A=[\hat{l}< t]$ has a full probability and if $t\leq t_l$, it has a null probability. Therefore, \eqref{eq105} holds trivially for these cases. For $t_l<t\leq t_u$, we have
\begin{align*}
G_1(A)=&(1+\epsilon_1)F_1(A)-\epsilon_1 h \leq (1+\epsilon_1)F_1(A)=\hat{G}_1(A)\nonumber \\
G_0(A)=&(1+\epsilon_0)F_0(A)-\epsilon_0 h \geq (1+\epsilon_0)F_0(A)-\epsilon_0=1-(1+\epsilon_0)(1-F_0(A))\nonumber\\
      =&1-(1+\epsilon_0)F_0(\bar{A})=1-G_0^U(\bar{A})=1-\hat{G}_0(\bar{A})=\hat{G}_0(A).
\end{align*}
Hence, $\hat{g}_0$ and $\hat{g}_1$ are single-sample minimax robust. Moreover, by the virtue of Theorem~\ref{thm1}, single-sample minimax robust LFDs minimize all $f$-divergences, accordingly they also maximize all $u$-affinities. 

\section{Proof of Theorem~\ref{theorem3}}\label{appendix_band2}
From \eqref{eq108} and \eqref{eq109}, LFDs can be written as
\begin{equation}\label{eq116}
\hat{g}_0=\begin{cases}
g_0^L, &  A_0 \\
\frac{1}{k_2} g_1^L\,\,\mbox{or } \frac{1}{k_2} g_1^U,&  A_2\\
g_0^U, &  A_1
\end{cases},\quad
\hat{g}_1=\begin{cases}
g_1^L, &  A_3 \\
k_1 g_0^L\,\,\mbox{or } k_1 g_0^U, &  A_5\\
g_1^U, &  A_4
\end{cases},
\end{equation}
Let $\hat{g}_0=\frac{1}{k_2} g_1^L$ on $ A_2$ and $\hat{g}_1=k_1 g_1^L$ on $A_5$. Then,
\begin{equation}\label{eq117}
A_1\cap  A_5,\quad A_2\cap  A_4,\quad \mbox{and}\quad A_2\cap  A_5
\end{equation}
are all empty sets, because their existence contradicts with \eqref{eq109}. Accordingly, the robust LRF can implicitly be written as
\begin{equation*}
\quad\frac{\hat{g}_1}{\hat{g}_0}=\begin{cases}
g_1^U/g_0^L, & A_0\cap  A_4 \\
k_1, & A_0\cap  A_5 \\
g_1^L/g_0^L, & A_0\cap  A_3 \\
g_1^U/g_0^U, & A_1\cap  A_4 \\
k_2, & A_2\cap  A_3 \\
g_1^L/g_0^U, &  A_1\cap  A_3
\end{cases}.
\end{equation*}
Furthermore, from \eqref{eq108} and \eqref{eq109} we have
\begin{align}\label{eq119}
A_0\cap A_5&=\{g_1^L<g_1<g_1^U,g_0=g_0^L\}=\{g_1^L/g_0^L<k_1=g_1/g_0<g_1^U/g_0^L\}, \nonumber\\
A_2\cap A_3&=\{g_0^L<g_0<g_0^U,g_1=g_1^L\}=\{g_1^L/g_0^U<k_2=g_1/g_0<g_1^L/g_0^L\}.
\end{align}
The empty sets in \eqref{eq117} imply $A_2\subset A_3$ and $A_5\subset A_0$, which in turn imply $A_5=A_0\cap A_5$ and $A_2=A_2\cap A_3$. Accordingly, $A_2$ and $A_5$ can also be made explicit in \eqref{eq116}. The sets $A_0$, $A_1$ and $A_2$ are disjoint, as well as the sets $A_3$, $A_4$ and $A_5$. On $A_2$ we have $g_1^L/k_2<g_0^U$ and due to continuity $\frac{1}{k_2} g_1^L=g_0^U$ at least on a single point. It is also at most on a single point, if not $A_1$ and $A_2$ are not disjoint. For $A_1$, the only choice left is then $A_1=\{g_1^L/k_2\geq g_0^U\}$. Similarly, i.e. considering $g_0^L< g_1^L/k_2$ on $A_2$ etc., we have $A_0=\{g_0^L\geq g_1^L/k_2\}$. Performing the same analysis over $A_2\cap A_3$, leads to the explicit definition of the sets $A_3$, $A_4$ and $A_5$. This implies that $A_1\cap  A_4$ is an empty set. Hence, $\hat{g}_0$, $\hat{g}_1$ and $\hat{g}_1/\hat{g}_0$ follow as defined by Theorem~\ref{theorem3}, \text{Type-C}. Following the same line of arguments for the cases $\hat{g}_0=\frac{1}{k_2} g_1^U$ on $ A_2$ and $\hat{g}_1=k_1 g_1^U$ on $A_5$ we have
\begin{align*}
A_1\cap A_5&=\{g_1^L<g_1<g_1^U,g_0=g_0^U\}=\{g_1^L/g_0^U<k_1=g_1/g_0<g_1^U/g_0^U\}, \nonumber\\
A_2\cap A_4&=\{g_0^L<g_0<g_0^U,g_1=g_1^U\}=\{g_1^U/g_0^U<k_2=g_1/g_0<g_1^U/g_0^L\},
\end{align*}
in the places of $A_0\cap  A_5$ and $A_2\cap  A_3$, respectively, empty $A_0\cap A_3$, and the explicit definition of the sets $A_j$, which leads to the LRF of \text{Type-A} and the corresponding LFDs. The LRF of \text{Type-B} is a special case arising from merging the middle three regions of the LRFs of \text{Type-A} and \text{Type-C} as $k_2\to k_1$. Moreover, LRFs of \text{Type-A} and \text{Type-C} tend to clipped likelihood ratio functions for $k_1$ small enough and $k_2$ large enough, and $k_1$ large enough and $k_2$ small enough, respectively. This implies empty $A_0\cap  A_4$ and $A_1\cap  A_3$.
\bibliographystyle{IEEEtran}
\bibliography{strings4}

\begin{thebibliography}{10}
\providecommand{\url}[1]{#1}
\csname url@samestyle\endcsname
\providecommand{\newblock}{\relax}
\providecommand{\bibinfo}[2]{#2}
\providecommand{\BIBentrySTDinterwordspacing}{\spaceskip=0pt\relax}
\providecommand{\BIBentryALTinterwordstretchfactor}{4}
\providecommand{\BIBentryALTinterwordspacing}{\spaceskip=\fontdimen2\font plus
\BIBentryALTinterwordstretchfactor\fontdimen3\font minus
  \fontdimen4\font\relax}
\providecommand{\BIBforeignlanguage}[2]{{%
\expandafter\ifx\csname l@#1\endcsname\relax
\typeout{** WARNING: IEEEtran.bst: No hyphenation pattern has been}%
\typeout{** loaded for the language `#1'. Using the pattern for}%
\typeout{** the default language instead.}%
\else
\language=\csname l@#1\endcsname
\fi
#2}}
\providecommand{\BIBdecl}{\relax}
\BIBdecl

\bibitem{kay}
S.~M. Kay, \emph{Fundamentals of Statistical Signal Processing, Volume 2:
  Detection Theory}.\hskip 1em plus 0.5em minus 0.4em\relax Prentice Hall PTR,
  jan 1998.

\bibitem{levy}
B.~C. Levy, \emph{Principles of Signal Detection and Parameter Estimation},
  1st~ed.\hskip 1em plus 0.5em minus 0.4em\relax Springer Publishing Company,
  Incorporated, 2008.

\bibitem{kassam}
S.~Kassam, G.~Moustakides, and J.~Shin, ``Robust detection of known signals in
  asymmetric noise,'' \emph{IEEE Transactions on Information Theory}, vol.~28,
  no.~1, pp. 84--91, January 1982.

\bibitem{elsawy}
A.~El-Sawy and V.~VandeLinde, ``Robust detection of known signals,'' \emph{IEEE
  Transactions on Information Theory}, vol.~23, no.~6, pp. 722--727, November
  1977.

\bibitem{nonparametric}
J.~D. Gibson and J.~L. Melsa, \emph{Introduction to nonparametric detection
  with applications}, ser. Mathematics in science and engineering.\hskip 1em
  plus 0.5em minus 0.4em\relax New York, San Francisco, London: Academic Press,
  1975.

\bibitem{hube81_2}
P.~J. Huber and E.~M. Ronchetti, \emph{{Robust statistics; 2nd ed.}}, ser.
  Wiley Series in Probability and Statistics.\hskip 1em plus 0.5em minus
  0.4em\relax Hoboken, NJ: Wiley, 2009.

\bibitem{Wilcoxon}
F.~Wilcoxon, ``Individual comparisons by ranking methods,'' \emph{Biometrics
  Bulletin}, vol.~1, no.~6, pp. 80--83, 1945.

\bibitem{gulbook}
G.~G{\"{u}}l, \emph{Robust and Distributed Hypothesis Testing}, ser. Lecture
  Notes in Electrical Engineering.\hskip 1em plus 0.5em minus 0.4em\relax
  Springer, 2017, vol. 414.

\bibitem{levy09}
B.~C. Levy, ``Robust hypothesis testing with a relative entropy tolerance,''
  \emph{IEEE Transactions on Information Theory}, vol.~55, no.~1, pp. 413--421,
  2009.

\bibitem{hube65}
P.~J. Huber, ``A robust version of the probability ratio test,'' \emph{Ann.
  Math. Statist.}, vol.~36, pp. 1753--1758, 1965.

\bibitem{HuberStrassen1973}
P.~J. Huber and V.~Strassen, ``Minimax tests and the neyman–pearson lemma for
  capacities,'' \emph{Annals of Statistics}, vol.~1, no.~2, pp. 251--263, 1973.

\bibitem{dabak}
A.~G. Dabak and D.~H. Johnson, ``Geometrically based robust detection,'' in
  \emph{Proceedings of the Conference on Information Sciences and Systems},
  Johns Hopkins University, Baltimore, MD, May 1994, pp. 73--77.

\bibitem{moment}
C.~Pandit and S.~Meyn, ``Worst-case large-deviation asymptotics with
  application to queueing and information theory,'' \emph{Stochastic Processes
  and their Applications}, vol. 116, no.~5, pp. 724 -- 756, 2006.

\bibitem{hube68}
P.~J. Huber and V.~Strassen, ``Robust confidence limits,'' \emph{Z.
  Wahrcheinlichkeitstheorie verw. Gebiete}, vol.~10, pp. 269--278, 1968.

\bibitem{hube73}
------, ``Minimax tests and the \mbox{Neyman-Pearson} lemma for capacities,''
  \emph{Ann. Statistics}, vol.~1, pp. 251--263, 1973.

\bibitem{gul5}
G.~G\"{u}l, ``Minimax robust decentralized hypothesis testing for parallel
  sensor networks,'' \emph{IEEE Transactions on Information Theory}, vol.~67,
  no.~1, pp. 538--548, 2021.

\bibitem{vastola}
K.~Vastola and H.~Poor, ``On the p-point uncertainty class (corresp.),''
  \emph{IEEE Transactions on Information Theory}, vol.~30, no.~2, pp. 374--376,
  March 1984.

\bibitem{kassamband}
S.~Kassam, ``Robust hypothesis testing for bounded classes of probability
  densities (corresp.),'' \emph{IEEE Transactions on Information Theory},
  vol.~27, no.~2, pp. 242--247, March 1981.

\bibitem{fauss}
M.~\mbox{Fau\ss} and A.~M. Zoubir, ``Old bands, new tracks-revisiting the band
  model for robust hypothesis testing,'' \emph{IEEE Transactions on Signal
  Processing}, vol.~64, no.~22, pp. 5875--5886, Nov 2016.

\bibitem{martin}
R.~Martin and S.~Schwartz, ``Robust detection of a known signal in nearly
  \mbox{Gaussian} noise,'' \emph{IEEE Transactions on Information Theory},
  vol.~17, no.~1, pp. 50--56, Jan 1971.

\bibitem{kassam2}
S.~Kassam and J.~Thomas, ``Asymptotically robust detection of a known signal in
  contaminated non-\mbox{Gaussian} noise,'' \emph{IEEE Transactions on
  Information Theory}, vol.~22, no.~1, pp. 22--26, January 1976.

\bibitem{martin2}
R.~Martin and C.~McGath, ``Robust detection to stochastic signals (corresp.),''
  \emph{IEEE Transactions on Information Theory}, vol.~20, no.~4, pp. 537--541,
  Jul 1974.

\bibitem{smith}
J.~E. Smith, ``Generalized chebychev inequalities: Theory and applications in
  decision analysis,'' \emph{Oper. Res.}, vol.~43, no.~5, pp. 807--825, oct
  1995.

\bibitem{brichet}
F.~Brichet and A.~Simonian, ``Conservative \mbox{Gaussian} models applied to
  measurement-based admission control,'' in \emph{Quality of Service, 1998.
  (IWQoS 98) 1998 Sixth International Workshop on}, May 1998, pp. 68--71.

\bibitem{johnson}
M.~A. Johnson and M.~R. Taaffe, ``An investigation of phase-distribution
  moment-matching algorithms for use in queueing models,'' \emph{Queueing
  Systems}, vol.~8, no.~1, pp. 129--147, Dec 1991.

\bibitem{RahimianMehrotra2019}
H.~Rahimian and S.~Mehrotra, ``Distributionally robust optimization: A
  review,'' Optimization Online, 2019.

\bibitem{GaoXie2018}
R.~Gao, Y.~Xie, L.~Xie, and H.~Xu, ``Robust hypothesis testing using
  wasserstein uncertainty sets,'' in \emph{Advances in Neural Information
  Processing Systems (NeurIPS)}, 2018.

\bibitem{WangGaoXie2024}
J.~Wang, R.~Gao, and Y.~Xie, ``Non-convex robust hypothesis testing using
  sinkhorn uncertainty sets,'' arXiv:2403.06721, 2024.

\bibitem{SunZou2022}
Z.~Sun and S.~Zou, ``Kernel robust hypothesis testing,'' \emph{arXiv preprint
  arXiv:2205.01755}, 2022.

\bibitem{SchrabKim2024}
A.~Schrab and I.~Kim, ``Robust kernel hypothesis testing under data
  corruption,'' \emph{arXiv preprint arXiv:2402.05631}, 2024.

\bibitem{WangGaoXie2022}
J.~Wang, R.~Gao, and Y.~Xie, ``A data-driven approach to robust hypothesis
  testing,'' \emph{arXiv preprint arXiv:2201.02657}, 2022.

\bibitem{PuranikMadhowPedarsani2021}
M.~Puranik, U.~Madhow, and R.~Pedarsani, ``Generalized likelihood ratio test
  for adversarially robust hypothesis testing,'' arXiv:2105.00182, 2021.

\bibitem{gularxiv}
\BIBentryALTinterwordspacing
G.~Gül, ``Asymptotically minimax robust likelihood ratio test,'' 2026.
  [Online]. Available: \url{https://arxiv.org/abs/2602.08174}
\BIBentrySTDinterwordspacing

\bibitem{veeravalli_counterexample}
A.~Magesh, Z.~Sun, V.~V. Veeravalli, and S.~Zou, ``Robust multi-hypothesis
  testing with moment-constrained uncertainty sets,'' in \emph{2024 IEEE
  International Symposium on Information Theory (ISIT)}, 2024, pp. 849--854.

\bibitem{elsawy2}
A.~El-Sawy and V.~VandeLinde, ``Robust sequential detection of signals in
  noise,'' \emph{IEEE Transactions on Information Theory}, vol.~25, no.~3, pp.
  346--353, May 1979.

\bibitem{ralph}
D.~Ralph, ``Global convergence of damped newton's method for nonsmooth
  equations via the path search,'' \emph{Math. Oper. Res.}, vol.~19, no.~2, pp.
  352--389, 1994.

\bibitem{potra}
F.~A. Potra and S.~J. Wright, ``Interior-point methods,'' \emph{J. Comput.
  Appl. Math.}, vol. 124, no. 1-2, pp. 281--302, Dec 2000.

\bibitem{gul6}
G.~G\"{u}l and A.~M. Zoubir, ``Minimax robust hypothesis testing,'' \emph{IEEE
  Transactions on Information Theory}, vol.~63, no.~9, pp. 5572--5587, 2017.

\bibitem{vajda}
F.~Osterreicher and I.~Vajda, ``Statistical information and discrimination,''
  \emph{IEEE Transactions on Information Theory}, vol.~39, no.~3, pp.
  1036--1039, May 1993.

\end{thebibliography}
\end{document}